\newtheorem{proposition}{Proposition}
\newtheorem{theorem}{Theorem}
\newtheorem{lemma}{Lemma}
\newtheorem{remark}{Remark}
\renewcommand   {\and}      {\qquad \textrm{and} \qquad }
\def            \eps        {\varepsilon}
\newcommand     \R          {\mathbb{R}}
\newcommand     \pa         {\partial}
\def            \to         {\rightarrow}
\newcommand\bfA{{\text{\textbf{A}}}}
\newcommand\bfF{{\text{\textbf{F}}}}
\newcommand\bfM{{\text{\textbf{M}}}}
\newcommand\bfu{{\text{\textbf{u}}}}
\newcommand\bfb{{\text{\textbf{b}}}}
\newcommand\bfw{{\text{\textbf{w}}}}
\renewcommand{\epsilon}{\varepsilon}
\renewcommand{\phi}{\varphi}
\renewcommand{\tilde}{\widetilde}
\begin{document}%

%
\pagenumbering{roman}%
\thispagestyle{empty}%
\begin{center}

	\large{\texttt{Master Thesis}}
	
	\vspace{2cm}
{\Huge \textbf{Abstract error analysis for Cahn--Hilliard type equations with dynamic boundary conditions}}

\vspace{3cm}
\large{\texttt{Eberhardt Karls University of T\"{u}bingen}}

\texttt{Department of Mathematics}

\vspace{2cm}

{submitted by}

\textsf{Paula Harder}

\vspace{2cm}

{supervised by}

\textsf{Prof. Dr. Christian Lubich}

\textsf{Dr. Bal\'{a}zs Kov\'{a}cs}

\vspace{2cm}

\today

\end{center}
\newpage

\chapter*{Acknowledgement}
Firstly, I would like to express my gratitude to Prof Dr. Lubich for giving me the possibility to write this master thesis. I would like to thank him for his supervision, for answering mathematical questions and giving me helpful advice. Beside offering me such an interesting topic I am very thankful for the office I could use, which provided a good surrounding for me to be productive.

 Moreover, I would like to thank Dr. Balázs Kovács for his great support during all the time of my thesis. I thank him for the vast amount of time he took for discussing questions with me and reading my texts and proofs. I am very grateful for the explanations, hints and ideas he gave me when I got stuck at some point.
 
Furthermore I would like to thank the whole numerical analysis group for the pleasant atmosphere they provided, for the lunch and tea breaks we had together.

I thank my brother for his very valuable help, for finding the bug in my implementation, for the possibility to always ask him for advice and his thorough proofreading. 

Additionally I would like to thank Niklas for the time we worked together every day in the C4 and his help at the end, Anja and Jonas for all the nice and restful tea breaks we had and Micheal for checking my English and for his constant backing.

Finally I would like to thank my parents for always supporting me incredibly.


\tableofcontents

\newpage

\thispagestyle{empty}

\chapter*{Introduction}

In this thesis we consider the Cahn--Hilliard equation with dynamic boundary conditions, in particular where a Cahn--Hilliard equation is prescribed on the boundary. That is for $u:\bar{\Omega}\times[0,T]
\to \R$
\begin{subequations}\label{c}
\begin{align}
\partial_t u &= \Delta (-\Delta u + W_\Omega'(u))\qquad &\text{in } \Omega, \label{eq:original_ch_1}\\
\partial_t u &= \Delta_\Gamma ( -\Delta_\Gamma u + W'_\Gamma(u)  + \partial_\nu u) - \partial_\nu ( -\Delta_\Gamma u + W'_\Gamma(u)  + \partial_\nu u) \qquad &\text{on}\  \Gamma. \label{eq:original_ch_rand1}
\end{align}
\end{subequations}

The Cahn--Hilliard equation, first described by \textit{Cahn\&Hilliard (1958)}, \cite{CH}, is an equation in mathematical physics and describes the phase separation of a binary fluid. The functions $W_\Omega,W_\Gamma :\mathbb{R} \to  \mathbb{R}$ are chemical potentials,  a typical example could be the double-well potential $W(u)=(u^2-1)^2$. {The solution  $u\in[-1,1]$ is the concentration of the two components of the fluid, $u=\pm 1$ then indicates a pure occurrence of one of the components.}
The two components are mixed in the beginning and then start building segregated domains, which are changing in time. To model the interactions with the wall in confined systems, dynamic boundary conditions are introduced, which is described in \textit{Goldstein et al. (2016)}, \cite{goldstein}.

This work addresses the problem of solving the Cahn--Hilliard equation numerically. For that we introduce an abstract formulation for Cahn--Hilliard type equations with dynamic boundary conditions, we conduct the spatial semidiscretization via finite elements and prove error bounds based on the technique of energy estimates. The variational formulation for Cahn--Hilliard/Cahn--Hilliard coupling, will apply to a larger abstract class of problems and is similar to the usual weak formulation of parabolic problems. In contrast to problems with non dynamic boundary conditions, the Hilbert spaces $L^2(\Omega)$ and $H^1(\Omega)$ are exchanged with the spaces $L^2(\Omega)\times L^2(\Gamma)$ and $\lbrace v\in H^1(\Omega): \gamma v \in H^1(\Gamma)\rbrace$, respectively.
Because we are considering a fourth-order differential equation, which will be described by a system of two second-order differential equations, the variational formulation also consists of a system of two equations.

The starting point of this thesis was a paper  by Balázs Kovács and Christian Lubich from 2016 on numerical analysis of parabolic problems with dynamic boundary conditions, \cite{1}. Our work has a similar structure and notations and uses some of the results shown in this paper. The Cahn--Hilliard equation with dynamic boundary conditions is mentioned in this paper, but no numerical analysis is conducted on it. This thesis will show  results for the Cahn--Hilliard/Cahn--Hilliard coupling, analogous to results shown in \textit{Kovács \& Lubich (2016)} for other parabolic problems with dynamic boundary conditions.

\textit{The thesis is organized as follows:}\\
In the Chapter \ref{c_1}  we consider a linear version of the Cahn--Hilliard equation with dynamic Cahn--Hilliard boundary conditions and derive a weak formulation for it. This weak formulation is then generalized, which yields an evolution equation in an abstract setting. Staying in this abstract setting, we use the technique of energy estimates to show a perturbation result, it bounds the error made by a perturbed solution of the weak formulation and will be later used to prove convergence.

Chapter 2 treats the semidiscretization in space by finite elements in an abstract setting. A consistency result is proved,  which together with the stability result from Chapter 1 then results in a convergence estimate. For proving consistency and convergence we collect  lemmata for bounding errors of the interpolation, the geometric approximation and the Ritz map.

The abstract setting as well as the convergence theorem are generalized in Chapter 3 to the nonlinear variant of the Cahn--Hilliard equation, by pointing out the main differences as compared to the linear theory. The proofs for stability and consistency will be partly similar to the linear case but more technical.

This work concludes with a description of the time discretization (Chapter 4), and presenting some results of numerical experiments, illustrating our theoretical results in Chapter 5. 
\newpage

\addcontentsline{toc}{chapter}{Introduction}

\newpage
\thispagestyle{empty}

\mainmatter
\pagenumbering{arabic}
\setcounter{page}{7}

\chapter{The Cahn--Hilliard equation with dynamic boundary conditions}\label{c_1}
\markboth{CHAPTER 1. THE CAHN--HILLIARD EQUATION WITH DYNAMIC B.C.}{chapter name}

We start this chapter by introducing a linear variant of the Cahn--Hilliard equation with Cahn--Hilliard boundary conditions. A weak formulation of it is derived, which fits into the abstract variational formulation we will present afterwards. In this abstract setting we will prove a perturbation result, which will later be applied to the semidiscrete case, and generalized  in the nonlinear case later.

By introducing a new function $w:\bar{\Omega}\to\R$ we rewrite the Cahn--Hilliard equation (\ref{c}) as a system of two second-order partial differential equations: For $u,w:\bar{\Omega}\to\R$
\begin{subequations}
	\begin{align}
	\partial_t u(x,t) &= \Delta w(x,t)\qquad&\text{in}\  \Omega\times[0,T]\\
	w(x,t) &=  -\Delta u(x,t) + W'_\Omega(u) \qquad&\text{in}\  \Omega\times[0,T],
	\end{align}
\end{subequations}
with dynamic Cahn--Hilliard boundary conditions
\begin{subequations}
	\begin{align}
	\partial_t u(x,t) &= \Delta_\Gamma w(x,t) - \partial_\nu w(x,t)\qquad &\text{on}\  \Gamma\times [0,T] \\
	w(x,t) &=  -\Delta_\Gamma u(x,t) + W'_\Gamma (u) + \partial_\nu u(x,t) \qquad &\text{on}\  \Gamma\times [0,T].
	\end{align}
\end{subequations}

 We assume that the domain $\Omega \subset \mathbb{R}^d$ is bounded and smooth, and define $\Gamma = \pa \Omega$ as its boundary. Furthermore $\pa_\nu$ denotes the normal derivative and $\nu$ the unit normal on $\Gamma$. The operator $\Delta_\Gamma$ is the Laplace--Beltrami operator, given by $\Delta_\Gamma u=\nabla_\Gamma\cdot \nabla_\Gamma u$ where $ \nabla_\Gamma (\gamma u) =(I-\nu \nu^T)\gamma (\nabla u)$ is the tangential gradient, $\nabla_\Gamma u $ instead of $\nabla_\Gamma (\gamma u) $ is written for brevity.

To consider a linear variant of the Cahn--Hilliard/Cahn--Hilliard coupling the derivatives of the chemical potentials,  $W',W_\Gamma' :\mathbb{R} \to  \mathbb{R}$, are replaced by the inhomogeneities $f_\Omega:\Omega\times [0,T]\to \mathbb{R}$ and $ f_\Gamma: \Gamma\times [0,T]\to \mathbb{R}$ respectively.
We obtain, for $u,w:\bar{\Omega}\to\R$,
\begin{subequations}\label{Cahn_H_bulk}
\begin{align}
\partial_t u(x,t) &= \Delta w(x,t)\label{eq:ch_1}\qquad&\text{in}\  \Omega\times[0,T]\\
w(x,t) &=  -\Delta u(x,t) + f_\Omega(x,t) \qquad&\text{in}\  \Omega\times[0,T],\label{eq:ch_2}
\end{align}
\end{subequations}
with dynamic linear Cahn--Hilliard boundary conditions
\begin{subequations}\label{Cahn_H_suface}
\begin{align}
\partial_t u(x,t) &= \Delta_\Gamma w(x,t) - \partial_\nu w(x,t)\qquad &\text{on}\  \Gamma\times [0,T] \label{eq:ch_rand1}\\
w(x,t) &=  -\Delta_\Gamma u(x,t) + f_\Gamma (x,t) + \partial_\nu u(x,t) \qquad &\text{on}\  \Gamma\times [0,T].\label{eq:ch_rand2}
\end{align}
\end{subequations}

\section{Weak formulation}

In this section we will derive a weak formulation of the linear Cahn--Hilliard equation with linear dynamic boundary conditions.

The first equation of the Cahn--Hilliard problem (\ref{eq:ch_1}) is multiplied with the test function $\phi^u\in \lbrace v\in H^1(\Omega) : \gamma v\in H^1(\Gamma)\rbrace$, the second equation with $\phi^w\in \lbrace v\in H^1(\Omega) : \gamma v\in H^1(\Gamma)\rbrace$ . Then both equations are integrated over the domain $\Omega$
\begin{eqnarray*}
	\int_\Omega\partial_t u \phi^u \mathrm{d}x &=& \int_\Omega \Delta w \phi^u \mathrm{d}x\\
	\int_\Omega w \phi^w \mathrm{d}x &=& - \int_\Omega \Delta u \phi^w \mathrm{d}x + \int_\Omega f_\Omega(x,t)\phi^w \mathrm{d}x.
\end{eqnarray*}
We denote by $\gamma v$ the trace of $v$ on $\Gamma$. On the terms on the right-hand side Green's formula is applied, therefore the above system changes into
\begin{eqnarray*}
	\int_\Omega \partial_t u \phi^u \mathrm{d}x &=& - \int_\Omega  \nabla w\cdot \nabla \phi^u \mathrm{d}x + \int_\Gamma \partial_\nu w \gamma \phi^u \mathrm{d}\sigma (x)\\
	\int_\Omega w \phi^w \mathrm{d}x &=& \int_\Omega \nabla u\cdot \nabla\phi^w \mathrm{d}x+ \int_\Omega f_\Omega\phi^w\mathrm{d}x- \int_\Gamma \partial_\nu u \gamma \phi^w\mathrm{d}\sigma(x).
\end{eqnarray*}
The dynamic boundary conditions (\ref{eq:ch_rand1}) and (\ref{eq:ch_rand2}) are now plugged into the equations
\begin{align*}
\int_\Omega \partial_t u \phi^u \mathrm{d}x &= - \int_\Omega  \nabla w\cdot \nabla \phi^u \mathrm{d}x + \int_\Gamma \Delta_\Gamma w \gamma \phi^u \mathrm{d}\sigma (x) - \int_\Gamma \gamma\partial_t u \gamma \phi^u \mathrm{d}\sigma (x)\\
\int_\Omega w \phi^w \mathrm{d}x &= \int_\Omega \nabla u\cdot \nabla\phi^w \mathrm{d}x+ \int_\Omega f_\Omega\phi^w\mathrm{d}x- \int_\Gamma \gamma w \gamma \phi^w\mathrm{d}\sigma(x)\\ &\qquad - \int_\Gamma \Delta_\Gamma u \gamma \phi^w\mathrm{d}\sigma(x)+ \int_\Gamma f_\Gamma \gamma \phi^w\mathrm{d}\sigma(x).
\end{align*}
Green's formula is applied again, now on the boundary, and we arrive at
\begin{align*}
\int_\Omega \partial_t u \phi^u \mathrm{d}x &= - \int_\Omega  \nabla w\cdot \nabla \phi^u \mathrm{d}x - \int_\Gamma \nabla_\Gamma w \cdot \nabla_\Gamma \phi^u \mathrm{d}\sigma (x) - \int_\Gamma \gamma\partial_t u \gamma \phi^u \mathrm{d}\sigma (x)\\
\int_\Omega w \phi^w \mathrm{d}x &= \int_\Omega \nabla u\cdot \nabla\phi^w \mathrm{d}x+ \int_\Omega f_\Omega\phi^w\mathrm{d}x- \int_\Gamma \gamma w \gamma \phi^w\mathrm{d}\sigma(x)\\ &\qquad + \int_\Gamma \nabla_\Gamma u \cdot \nabla_\Gamma \phi^w\mathrm{d}\sigma(x)+ \int_\Gamma f_\Gamma \gamma \phi^w\mathrm{d}\sigma(x).
\end{align*}
Rearranging the equations yields
\begin{subequations}\label{weak_ch}
\begin{align}
\Big(\int_\Omega & \partial_t u \phi^u \mathrm{d}x + \int_\Gamma\gamma \partial_t u \gamma \phi^u\mathrm{d}\sigma(x)\Big) +\Big( \int_\Omega  \nabla w\cdot \nabla \phi^u \mathrm{d}x + \int_\Gamma \nabla_\Gamma  w\cdot \nabla_\Gamma  \phi^u \mathrm{d}\sigma (x)\Big) &=0 
\end{align}
\begin{align}
\Big(\int_\Omega&  w \phi^w \mathrm{d}x+\int_\Gamma \gamma w \gamma \phi^w\mathrm{d}\sigma(x) \Big) -\Big(\int_\Omega \nabla u\cdot \nabla\phi^w \mathrm{d}x + \int_\Gamma \nabla_\Gamma u \cdot \nabla_\Gamma \phi^w\mathrm{d}\sigma(x) \Big)\nonumber \\ &= \int_\Omega f_\Omega\phi^w\mathrm{d}x  + \int_\Gamma f_\Gamma \gamma \phi^w\mathrm{d}\sigma(x).
\end{align}
\end{subequations}

This derivation yields the following lemma:
\begin{lemma}
	Every classical solution $(u, w)$ of the Cahn--Hilliard/Cahn--Hilliard coupling (\ref{Cahn_H_bulk})--(\ref{Cahn_H_suface}) is a solution of the weak formulation (\ref{weak_ch}).
\end{lemma}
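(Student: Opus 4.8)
The plan is to prove this inclusion by a direct computation, namely by reproducing the test-and-integrate derivation carried out just above while making explicit which regularity and which identity is invoked at each step. Because $(u,w)$ is assumed to be a \emph{classical} solution, for every fixed $t\in[0,T]$ both components are smooth enough---say $u(\cdot,t),w(\cdot,t)\in H^2(\Omega)$ with traces in $H^2(\Gamma)$---so that $\Delta u,\Delta w$, the normal derivatives $\partial_\nu u,\partial_\nu w$, and the surface Laplacians $\Delta_\Gamma u,\Delta_\Gamma w$ all exist pointwise and every Green identity below is licensed.

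First I would fix two test functions $\phi^u,\phi^w\in\{v\in H^1(\Omega):\gamma v\in H^1(\Gamma)\}$, multiply the bulk equations (\ref{eq:ch_1}) and (\ref{eq:ch_2}) by $\phi^u$ and $\phi^w$ respectively, and integrate over $\Omega$. Applying Green's first identity in the bulk to $\int_\Omega\Delta w\,\phi^u\,\mathrm{d}x$ and $\int_\Omega\Delta u\,\phi^w\,\mathrm{d}x$ turns each Laplacian into a gradient term $\mp\int_\Omega\nabla(\cdot)\cdot\nabla(\cdot)\,\mathrm{d}x$ together with a boundary integral, namely $\int_\Gamma\partial_\nu w\,\gamma\phi^u\,\mathrm{d}\sigma(x)$ and $-\int_\Gamma\partial_\nu u\,\gamma\phi^w\,\mathrm{d}\sigma(x)$; the membership $\phi^u,\phi^w\in H^1(\Omega)$ together with the trace theorem is exactly what makes these terms meaningful.

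Next I would eliminate the normal derivatives using the dynamic boundary conditions: (\ref{eq:ch_rand1}) rearranges to $\partial_\nu w=\Delta_\Gamma w-\gamma\partial_t u$ and (\ref{eq:ch_rand2}) to $\partial_\nu u=\gamma w+\Delta_\Gamma u-f_\Gamma$ on $\Gamma$. Substituting these replaces the two normal-derivative boundary integrals by surface integrals of $\Delta_\Gamma w$, $\Delta_\Gamma u$, $\gamma w$, $\gamma\partial_t u$ and $f_\Gamma$. A second Green's formula, now on the surface, then converts $\int_\Gamma\Delta_\Gamma w\,\gamma\phi^u\,\mathrm{d}\sigma(x)$ and $\int_\Gamma\Delta_\Gamma u\,\gamma\phi^w\,\mathrm{d}\sigma(x)$ into $-\int_\Gamma\nabla_\Gamma w\cdot\nabla_\Gamma\phi^u\,\mathrm{d}\sigma(x)$ and $-\int_\Gamma\nabla_\Gamma u\cdot\nabla_\Gamma\phi^w\,\mathrm{d}\sigma(x)$. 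Collecting the bulk and surface time-derivative and gradient contributions and moving the data to the right-hand side reproduces (\ref{weak_ch}) verbatim.

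The step that deserves the most care---and the one I would single out as the main point---is this second, surface-level Green's formula. It relies on $\Gamma=\partial\Omega$ being a compact manifold \emph{without boundary}, so that the surface divergence theorem produces no boundary-of-boundary term, and it requires the tangential gradients $\nabla_\Gamma\phi^u,\nabla_\Gamma\phi^w$ to be square-integrable on $\Gamma$. This is precisely why the natural test space is $\{v\in H^1(\Omega):\gamma v\in H^1(\Gamma)\}$ rather than plain $H^1(\Omega)$. Once these two facts are in place, the only remaining work is bookkeeping of signs when substituting the two boundary conditions, and the identity falls out.
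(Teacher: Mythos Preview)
Your proposal is correct and follows essentially the same route as the paper: multiply and integrate, apply Green's formula in the bulk, substitute the dynamic boundary conditions to replace the normal-derivative terms, then apply the surface Green's formula and rearrange. Your additional remarks on why the test space must carry $H^1(\Gamma)$ traces and why $\Gamma$ being closed eliminates any boundary-of-boundary term make explicit what the paper's derivation leaves implicit, but the argument is the same.
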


For a sufficiently smooth solution of the weak formulation the reverse statement holds as well.
With the proof above and fundamental lemma of variational calculus can be shown:
\begin{remark}
	Every weak solution which is sufficiently regular is a strong solution.
\end{remark}

\section{Abstract setting}\label{abstract}

The abstract setting is the standard  abstract framework for parabolic problems, the difference is that we will have an equation system consisting of two equations. 
 
We consider the Gelfand triple $(V, H, V')$, as in \textit{Lubich\&Ostermann (1995)}, \cite[Section ~ 2.1]{ostermann},  with $(V,||\cdot||)$ and $(H,|\cdot|)$ being two Hilbert spaces. $V$ is continuously and densely embedded in $H$:
\vspace{0.5cm}
\begin{center}
\begin{tabular*}{5cm}[c]{ccccccc}
$V $& $\hookrightarrow $ & $H$  &$\cong$ & $H'$& $\hookrightarrow $ & $V'$\\
$K^{-1}||\cdot||$  & $\geq$ & $|\cdot|$ &$=$ & $|\cdot |$& $\geq$ & $K||\cdot ||_\ast$,
\end{tabular*}
\end{center}
with $K>0$.
\vspace*{0.5cm}

On $H$ we consider an inner product $m(\cdot,\cdot)$ which  induces the norm $|\cdot|$, whereas on $V$ we consider a continuous, symmetric bilinear form $a(\cdot,\cdot)$. We assume that this bilinear form satisfies the G\r{a}rding inequality: There exists an $\alpha > 0 $ and a $\mu\in \mathbb{R}$ such that
$$a(v,v)\geq \alpha ||v||^2-\mu|v|^2\qquad \forall v\in V. $$
The dual norm $||\cdot||_\ast$ on $V'$ is defined as follows
$$||\phi||_\ast := \sup_{v \in V\setminus\lbrace 0\rbrace}\frac{m(\phi,v)}{||v||}.$$
This definition implies following inequality, which  will be often used in the proof of the perturbation result, Proposition \ref{energy_est}
\begin{equation}\label{dual}
m(\phi, v) = ||v|| \Big(\frac{m(\phi,v)}{||v||}\Big)\leq ||\phi ||_\ast||v||.
\end{equation}
We introduce the energy semi-norm induced by $a(.,.)$
$$||v||_a^2:=a(v,v). $$ We consider another norm on $V$
$$||v||_{a^\ast}^2:= a^\ast(v,v),$$
where $$a^\ast(v,w):=a(v,w)+m(v,w),$$
and show its equivalence   to the $V$-norm $||\cdot||$.
\begin{lemma}\label{equi}
	There exists $c_0,c_1>0$ such that
	$$c_0||v||\leq ||v||_{a^\ast }\leq  c_1 ||v||, \qquad v \in V.$$
\end{lemma}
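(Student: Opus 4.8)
The plan is to prove the two inequalities separately, each drawing on one of the structural assumptions on the pair $(a,m)$, and noting throughout that $m(v,v)=|v|^2$ since $m$ induces the norm $|\cdot|$.

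For the upper bound I would exploit the continuity of $a$: since $a$ is a continuous bilinear form on $V$, there is a constant $M>0$ with $a(v,v)\le M\|v\|^2$. Combining this with the embedding estimate $|v|\le K^{-1}\|v\|$ read off from the Gelfand-triple diagram gives
$$\|v\|_{a^\ast}^2 = a(v,v)+m(v,v) = a(v,v)+|v|^2 \le \big(M+K^{-2}\big)\|v\|^2,$$
so that one may take $c_1=(M+K^{-2})^{1/2}$. This direction is entirely routine.

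For the lower bound I would invoke the G\r{a}rding inequality $a(v,v)\ge \alpha\|v\|^2-\mu|v|^2$, which yields
$$\|v\|_{a^\ast}^2 = a(v,v)+|v|^2 \ge \alpha\|v\|^2+(1-\mu)|v|^2.$$
When $\mu\le 1$ the mass term is nonnegative and may simply be dropped, giving $c_0=\alpha^{1/2}$; this is precisely the situation in the concrete Cahn--Hilliard coupling, where $a(v,v)=\|v\|^2-|v|^2$ forces $\alpha=\mu=1$ and in fact $\|v\|_{a^\ast}=\|v\|$ exactly. To cover $\mu>1$ as well, the negative contribution $(1-\mu)|v|^2$ must be absorbed into the leading term via $|v|^2\le K^{-2}\|v\|^2$, producing $\|v\|_{a^\ast}^2\ge\big(\alpha-(\mu-1)K^{-2}\big)\|v\|^2$ and hence requiring the compatibility condition $\alpha>(\mu-1)K^{-2}$ for positivity of $c_0$.

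The main obstacle is exactly this lower (coercivity) bound. The upper bound merely uses boundedness, but controlling the sign-indefinite term $-\mu|v|^2$ is what genuinely turns $\|\cdot\|_{a^\ast}$ into a norm: because $a$ supplies only a seminorm, the entire definiteness of $\|\cdot\|_{a^\ast}$ comes from the added mass term together with the G\r{a}rding estimate, and one must verify that the G\r{a}rding shift $\mu$ does not overwhelm it. In the setting relevant here this is automatic, since $\mu=1$ and the inequality holds with equality.
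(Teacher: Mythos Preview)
Your upper bound is exactly the paper's argument. The lower bound is where the two diverge.

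The paper handles the G\r{a}rding shift uniformly in $\mu$ by a single multiplicative trick: since $\|v\|_{a^\ast}^2=\|v\|_a^2+|v|^2$ with $\|v\|_a^2=a(v,v)\ge 0$ (this nonnegativity is implicit in the paper's calling $\|\cdot\|_a$ a semi-norm), one has
\[
\max(1,\mu)\,\|v\|_{a^\ast}^2 \;\ge\; \|v\|_a^2+\mu|v|^2 \;\ge\; \alpha\|v\|^2,
\]
the last step being G\r{a}rding. This gives $c_0=\sqrt{\alpha/\max(1,\mu)}$ with no side condition on $\alpha,\mu,K$.

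Your route instead absorbs $(1-\mu)|v|^2$ into the $\alpha\|v\|^2$ term via the embedding, which forces the compatibility constraint $\alpha>(\mu-1)K^{-2}$ when $\mu>1$. That constraint is not part of the paper's hypotheses and is genuinely stronger than what is needed: the paper's argument shows that positive semidefiniteness of $a$ alone suffices, regardless of how large $\mu$ is. So your instinct that ``something extra'' is required for $\mu>1$ is correct, but the missing ingredient is $a(v,v)\ge 0$ (already built into the setup), not a quantitative relation among the constants. In the concrete Cahn--Hilliard case you note, $\mu=1$ and the issue is moot, but for the abstract lemma the paper's scaling trick is the cleaner resolution.
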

\begin{proof}
	The first inequality is shown by the G\r{a}rding inequality and the second one by using the continuity of the embedding and the bilinear form.
	Let $v \in V$,
	with the G\r{a}rding inequality we get $$\max(1,\mu)||v||_{a^\ast}^2\geq ||v||_a^2+\mu|v|^2\geq \alpha ||v||^2, $$
	which then yields $||v||_{a^\ast}\geq c_0 ||v|| $ for a suitable constant $c_0>0$.
	Because $V$ is continuously embedded in $H$ we have $|v|\leq K^{-1} ||v|| . $
	The continuity of $a(\cdot ,\cdot)$ gives use $||v||_a^2\leq M ||v||^2. $ for some $M>0$.
	Adding up both inequalities we obtain
	$||v||_{a^\ast}^2=||v||_a^2+|v|^2\leq c_1^2 ||v||^2$ 	for a suitable constant $c_1>0$.
\end{proof}
An abstract formulation for a problem like the Cahn--Hilliard equation with Cahn--Hilliard type dynamic boundary conditions is the following: Find $u\in C^1([0,T],H) \cap L^2(0,T;V)$ and $w \in L^2(0,T;V)$ such that
\begin{subequations}\label{eq:abstract_weak}
	\begin{align}
	m(\pa_t u(t), \phi^u)+a(w(t),\phi^u)&=0 \qquad &\forall \phi^u\in V \\
	m(w(t),\phi^w)-a(u(t) , \phi^w) &= m(f(t),\phi^w)\qquad&\forall \phi^w\in V\\
	u(0)  & = u_0 & \nonumber	
	\end{align}
\end{subequations}
for given initial data $u_0\in H$, an inhomogeneity $f \in L^2(0,T;H)$ and for time $0<t\leq T$.

\subsection*{The Cahn--Hilliard equation in the abstract setting} 
Now we will show how the Cahn--Hilliard equation with Cahn--Hilliard boundary conditions fits into the proposed abstract framework.\\
We define $V:=\lbrace v\in H^1(\Omega) : \gamma v\in H^1(\Gamma)\rbrace$ and $H:= L^2(\Omega)\times L^2(\Gamma)$. \\
The bilinear form and the inner product are set as 
\begin{align*}
m(u,v) &:= \int_\Omega uv \mathrm{d}x+\int_\Gamma \gamma u \gamma v\mathrm{d}\sigma(x)\\
a(u,v) &:= \int_\Omega  \nabla u\cdot \nabla v \mathrm{d}x + \int_\Gamma \nabla_\Gamma  u\cdot \nabla_\Gamma  v \mathrm{d}\sigma (x).
\end{align*}
Plugging these definitions into the abstract formulation \eqref{eq:abstract_weak}, we obtain the equation system \eqref{weak_ch}.

We define the following norms on $V$ and $H$
\begin{align*}
||v||^2 &:=a(v,v)+m(v,v) \qquad &\text{for } v\in V\\
|v|^2 &:=m(v,v) \qquad &\text{for } v\in H,
\end{align*}
We  then have $a(v,v)=||v||^2-|v|^2$, therefore $a(\cdot,\cdot)$ satisfies the G\r{a}rding inequality with $\alpha =1$ and $\mu=1$.\\
Furthermore we define a $H^2$-norm $$||u||_2^2:=||u||_{H^2(\Omega)}^2+||\gamma u||_{H^2(\Omega)}^2 .$$

\section{A continuous perturbation result}
In this section we will consider a perturbed solution, which solves the weak formulation with some defects $d^u(t),d^w(t) \in V$. We will show a perturbation result, which bounds the errors by the defects. This will be done in the abstract setting, as introduced in Section \ref{abstract}.

We consider the perturbed solution $(u^\ast, w^\ast )\in C^1([0,T],H) \cap L^2(0,T;V)\times L^2(0,T;V)$ of the system (\ref{eq:abstract_weak}). This perturbed solution is the exact solution of the following system  (omitting the argument $t$)
\begin{subequations}\label{per}
\begin{align}
m(\partial_t u^\ast, \phi^u)+a(w^\ast,\phi^u)&=m(d^u,\phi^u) \qquad &\forall \phi^u\in V\\
m(w^\ast,\phi^w)-a(u^\ast , \phi^w) &= m(f,\phi^w)+m(d^w,\phi^w)\qquad &\forall \phi^w\in V,
\end{align}
\end{subequations}
with some defects $d^u(t),d^w(t) \in V$.

Subtracting the equations of the system (\ref{eq:abstract_weak}), with exact solution $(u,w)$, of the equations of the perturbed system (\ref{per}) and setting $e^u:=u^\ast-u$ and $e^w:=w^\ast-w$ we obtain the error equation system
\begin{subequations}\label{abstract_defect}
\begin{align}
m(\partial_t e^u, \phi^u)+a(e^w,\phi^u)&=m(d^u,\phi^u) \qquad &\forall \phi^u\in V\\
m(e^w,\phi^w)-a(e^u , \phi^w) &= m(d^w,\phi^w)\qquad &\forall \phi^w\in V.
\end{align}
\end{subequations}
Using this error equation system and by energy estimates we will bound the errors $e^u$ and $e^w$ as follows.
\begin{proposition}\label{energy_est}
	Let   $(e^u, e^w) \in C^1([0,T];V)^2$ be the solution of the error equation system (\ref{abstract_defect}) with defects $d^u, d^w \in C^1([0,T];H)$, then the following estimate holds for all $t$ with $0<t\leq T$,
	\begin{align*}
	||&e^u(t)||^2+||e^w(t)||^2+\int_0^t||\partial_te^u(s)||^2\mathrm{d}s+\int_0^t||e^w(s)||^2\mathrm{d}s   \\ & 
	\leq C \Big( || e^u(0)||^2+||e^w(0)||^2+|| d^u(0)||_\ast^2+||d^u(t)||_\ast^2\\
	& \qquad +\int_0^t||d^u(s)||_\ast^2 \mathrm{d}s+\int_0^t||\partial_td^u(s)||_\ast^2\mathrm{d}s +\int_0^t||d^w(s)||_\ast^2\mathrm{d}s+\int_0^t||\partial_td^w(s)||_\ast^2\mathrm{d}s\Big),
	\end{align*}
	where $C$ depends on $\alpha, \mu$ and exponentially on $T$.
\end{proposition}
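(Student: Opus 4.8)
The plan is to derive the estimate from three energy identities, each obtained by a specific choice of test functions in the error system (\ref{abstract_defect}), and then to close the resulting inequality with Gronwall's lemma. Throughout I would use Lemma \ref{equi} to pass between $||\cdot||$, the energy seminorm $||\cdot||_a$ and $|\cdot|$ (recall $||v||_{a^\ast}^2=||v||_a^2+|v|^2$), and the inequality (\ref{dual}) together with Young's inequality to split every mixed term into an absorbable error part and a defect part. The regularity $(e^u,e^w)\in C^1([0,T];V)^2$ and $d^u,d^w\in C^1([0,T];H)$ is exactly what is needed to differentiate the second equation in time and to integrate by parts in time later.

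The first, low-order identity comes from testing the first equation with $\phi^u=e^w$ and the second with $\phi^w=\partial_t e^u$; subtracting and using the symmetry $m(\partial_t e^u,e^w)=m(e^w,\partial_t e^u)$ together with $a(e^u,\partial_t e^u)=\tfrac12\tfrac{\mathrm{d}}{\mathrm{d}t}||e^u||_a^2$ gives $\tfrac12\tfrac{\mathrm{d}}{\mathrm{d}t}||e^u||_a^2+||e^w||_a^2=m(d^u,e^w)-m(d^w,\partial_t e^u)$, which controls $||e^u(t)||_a^2$ and $\int_0^t||e^w||_a^2$. Differentiating the second equation in time yields $m(\partial_t e^w,\phi^w)-a(\partial_t e^u,\phi^w)=m(\partial_t d^w,\phi^w)$. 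Testing this with $\phi^w=e^w$ and eliminating $a(\partial_t e^u,e^w)$ by the first equation tested with $\phi^u=\partial_t e^u$ produces $\tfrac12\tfrac{\mathrm{d}}{\mathrm{d}t}|e^w|^2+|\partial_t e^u|^2=m(d^u,\partial_t e^u)+m(\partial_t d^w,e^w)$, controlling $|e^w(t)|^2$ and $\int_0^t|\partial_t e^u|^2$. Finally, testing the differentiated second equation with $\phi^w=\partial_t e^u$ and eliminating the cross term by the first equation tested with $\phi^u=\partial_t e^w$ gives $\tfrac12\tfrac{\mathrm{d}}{\mathrm{d}t}||e^w||_a^2+||\partial_t e^u||_a^2=m(d^u,\partial_t e^w)-m(\partial_t d^w,\partial_t e^u)$, supplying $||e^w(t)||_a^2$ and $\int_0^t||\partial_t e^u||_a^2$.

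I would then integrate the three identities over $[0,t]$ and add them. By Lemma \ref{equi} the left-hand side dominates $||e^u(t)||_a^2+||e^w(t)||^2+\int_0^t||\partial_t e^u||^2+\int_0^t||e^w||_a^2$ up to constants; the two still-missing pieces I recover separately. The term $|e^u(t)|^2$ follows from $e^u(t)=e^u(0)+\int_0^t\partial_t e^u$, giving $|e^u(t)|^2\lesssim|e^u(0)|^2+T\int_0^t|\partial_t e^u|^2$, and $\int_0^t|e^w|^2$ follows by testing the second equation of (\ref{abstract_defect}) with $\phi^w=e^w$ and using (\ref{dual}) and Young to bound $|e^w|^2$ by $||e^u||_a^2+||e^w||_a^2+||d^w||_\ast^2$. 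Every right-hand term is estimated with (\ref{dual}) and Young: factors $||e^w||$, $||\partial_t e^u||$ (pointwise or integrated) get weight $\eps$ and are absorbed on the left, while the complementary defect factors yield exactly $\int_0^t||d^u||_\ast^2$, $\int_0^t||d^w||_\ast^2$, $\int_0^t||\partial_t d^w||_\ast^2$.

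The single term that resists direct estimation, and the main obstacle, is $\int_0^t m(d^u,\partial_t e^w)$ from the third identity, since the estimate offers no control on $\partial_t e^w$. Here I would integrate by parts in time, $\int_0^t m(d^u,\partial_t e^w)=m(d^u(t),e^w(t))-m(d^u(0),e^w(0))-\int_0^t m(\partial_t d^u,e^w)$, which is legitimate because $d^u,e^w\in C^1$. The boundary terms, bounded by (\ref{dual}) and Young, produce precisely the data $||d^u(t)||_\ast^2$, $||d^u(0)||_\ast^2$, $||e^w(0)||^2$ (with $||e^w(t)||^2$ absorbed on the left), and $\int_0^t m(\partial_t d^u,e^w)$ yields $\int_0^t||\partial_t d^u||_\ast^2$ plus a feedback integral $\int_0^t||e^w||^2$. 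After absorbing all $\eps$-terms, the only non-data terms remaining on the right are feedback integrals of the type $\int_0^t(||e^w||^2+||e^u||_a^2)\,\mathrm{d}s$, each bounded by $\int_0^t y(s)\,\mathrm{d}s$ with $y(t)$ the full left-hand side of the proposition. Gronwall's inequality then closes the argument and produces the factor growing exponentially in $T$; the dependence of $C$ on $\alpha$ and $\mu$ enters only through the equivalence constants $c_0,c_1$ of Lemma \ref{equi}.
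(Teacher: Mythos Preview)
Your proposal is correct and follows essentially the same route as the paper. Your three identities coincide with the paper's equation (\ref{error6}) and Parts a and b of its ``Second Energy Estimate'', including the integration-by-parts trick for $\int_0^t m(d^u,\partial_t e^w)$. The only organisational difference is that the paper additionally tests with $(\phi^u,\phi^w)=(e^u,e^w)$ and adds the resulting identity (their (\ref{error3})) to your first identity, which upgrades the left-hand side to the full $||\cdot||_{a^\ast}$-norm directly and makes $|e^u(t)|^2$ and $\int_0^t|e^w|^2$ appear automatically; you instead recover these two pieces by the fundamental theorem of calculus and a pointwise test of the second equation with $e^w$. Both variants close in the same way via absorption and Gronwall, and the paper's extra testing simply makes the bookkeeping a bit cleaner.
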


\begin{proof}
The proof is based on energy estimates, it is lengthy because we need bounds of the errors in the $V$-norm, which are point wise in time . The equations of the error equation system are tested repeatedly with different functions. The test functions are chosen in a way that, by using the anti-symmetric structure of (\ref{abstract_defect}), with addition or subtraction either the term with $m(\cdot,\cdot)$ on the right-hand side or the term with $a(\cdot,\cdot)$ cancels out. 
The proof is structured in three parts, the first two parts both derive an estimate, which are then combined in the third part.

	In this proof the argument $t$ is omitted.\\
	\textit{(i) First Energy Estimate:}
	
	We consider (\ref{abstract_defect}). By setting $\phi^u=e^u$ and $\phi^w=e^w$ as test functions we obtain
	\begin{subequations}
		\begin{align}
		m(\partial_t e^u, e^u)+a(e^w,e^u)&=m(d^u,e^u)\label{eq:error1} \\
		m(e^w,e^w)-a(e^u ,e^w) &=m(d^w,e^w).\label{error2}
		\end{align}
	\end{subequations}
	Addition of (\ref{eq:error1}) and (\ref{error2}) and symmetry of $a(\cdot,\cdot)$ then yields
	\begin{equation}\label{error3}
	m(\partial_t e^u,e^u)+m(e^w,e^w) = m(d^u,e^u)+m(d^w,e^w).
	\end{equation}
	On the other hand, choosing
	 $\phi^u=e^w$ and $\phi^w=\partial_t e^u$  yields
	\begin{subequations}
		\begin{align}
		m(\partial_t e^u, e^w)+a(e^w,e^w)&=m(d^u,e^w)\label{error4} \\
		m(e^w,\partial_t e^u)-a(e^u ,\partial_t e^u) &= m(d^w,\partial_t e^u).\label{error5}
		\end{align}
	\end{subequations}
	By subtraction of (\ref{error4}) and (\ref{error5}) and symmetry of $m(\cdot,\cdot)$ we obtain
	\begin{equation}\label{error6}
	a(e^u ,\partial_t e^u)+a(e^w,e^w) =m(d^u,e^w)- m(d^w,\partial_t e^u).
	\end{equation}
	Now the equations (\ref{error3}) and (\ref{error6}) are summed up, we then use that for the bilinear form $a^\ast(\cdot,\cdot)$ the following holds
	\begin{align}\label{bili_derivatiom}
	a^\ast(\pa_t v,v) =\frac{1}{2}\frac{\mathrm{d}}{\mathrm{d}t}||v||_{a^\ast}^2,
	\end{align}
	and we obtain
	\begin{equation*}
	\frac{1}{2}\frac{\mathrm{d}}{\mathrm{d}t}||e^u||_{a^\ast}^2+||e^w||_{a^\ast}^2 =m(d^u,e^u)+m(d^u,e^w)- m(d^w,\partial_t e^u)+ m(d^w,e^w).
	\end{equation*}
	With (\ref{dual}) the right-hand side is estimated as follows:
	\begin{equation*}
	\frac{1}{2}\frac{\mathrm{d}}{\mathrm{d}t}||e^u||_{a^\ast}^2+||e^w||_{a^\ast}^2 \leq ||d^u||_\ast||e^u||+||d^u||_\ast||e^w||+||d^w||_\ast||\partial_t e^u||+||d^w||_\ast||e^w||.
	\end{equation*}
	Application of the $\epsilon$-Young inequality yields
	\begin{equation*}
	\frac{1}{2}\frac{\mathrm{d}}{\mathrm{d}t}||e^u||_{a^\ast}^2+||e^w||_{a^\ast}^2 \leq \frac{\epsilon '}{2}||e^u||^2+\frac{\epsilon}{2}||\partial_t e^u||^2+\epsilon ' ||e^w||^2+\frac{1}{\epsilon '}||d^u||_\ast^2+\frac{1}{2\epsilon '}||d^w||_\ast^2+\frac{1}{2\epsilon }||d^w||_\ast^2,
	\end{equation*}
	for $\eps,\eps'>0$.\\
	We integrate from $0$ to $t$ and apply the equivalence of the norms (Lemma \ref{equi}) to obtain
	\begin{align*}
	\frac{c_0}{2}||e^u(t)||^2+(c_0-\epsilon')\int_0^t ||e^w(s)||^2\mathrm{d}s   & \leq \frac{c_1}{2}||e^u(0)||^2+\frac{\epsilon'}{2}\int_0^t||e^u(s)||^2\mathrm{d}s \\
	&\qquad  +\frac{\epsilon}{2}\int_0^t||\partial_t e^u(s)||^2\mathrm{d}s
	+\frac{1}{2\epsilon'}\int_0^t||d^u(s)||_\ast^2\mathrm{d}s\\
	& \qquad +\Big(\frac{1}{2\epsilon'}+\frac{1}{2\epsilon}\Big)\int_0^t||d^w(s)||_\ast^2\mathrm{d}s.
	\end{align*}
	We now introduce the constant $c>0$. This constant, whenever it is used in this or following proofs, is a generic constant in may change in every step.
	
	To absorb the integral term with $e_h^w$ we choose $\eps':=\frac{c_0}{2}$, which then yields
		\begin{align*}
		||e^u(t)||^2+\int_0^t ||e^w(s)||^2\mathrm{d}s  & \leq c\Big(||e^u(0)||^2+\int_0^t||e^u(s)||^2\mathrm{d}s+{\epsilon}\int_0^t||\partial_t e^u(s)||^2\mathrm{d}s
		\\
		&\qquad +\int_0^t||d^u(s)||_\ast^2\mathrm{d}s +\Big(1+\frac{1}{\epsilon}\Big)\int_0^t||d^w(s)||_\ast^2\mathrm{d}s  \Big).
		\end{align*}
	By using the Gronwall inequality we obtain
	  \begin{align*}
	  ||&e^u(t)||^2+\int_0^t ||e^w(s)||^2\mathrm{d}s \\  & \leq c\Big(||e^u(0)||^2+\epsilon\int_0^t||\partial_t e^u(s)||^2\mathrm{d}s+\nonumber
	  \int_0^t||d^u(s)||_\ast^2\mathrm{d}s +\Big(1+\frac{1}{\epsilon}\Big)\int_0^t||d^w(s)||_\ast^2\mathrm{d}s  \Big).
	  \end{align*}
	  Setting $\eps:=\frac{1}{2c}$ yields
	   \begin{align}\label{ee1}
	   ||e^u(t)||^2+\int_0^t ||e^w(s)||^2\mathrm{d}s   & \leq c\Big(||e^u(0)||^2+\nonumber
	   \int_0^t||d^u(s)||_\ast^2\mathrm{d}s\\
	   & \qquad  +\int_0^t||d^w(s)||_\ast^2\mathrm{d}s \Big) +\frac{1}{2}\int_0^t||\partial_t e^u(s)||^2\mathrm{d}s.
	   \end{align}
	\textit{(ii) Second Energy Estimate:}\\
	To control the last term on the right-hand side of (\ref{ee1}) we will show an estimate with the same term  on the left-hand side. Therefore
	the second equation of the error equation system is differentiated with respect to time,  and the following system is obtained
	\begin{subequations}\label{error_der}	
	\begin{align}
	m(\partial_t e^u, \phi^u)+a(e^w,\phi^u)&=m(d^u,\phi^u) \\
	m(\partial_t e^w,\phi^w)-a(\partial_t e^u , \phi^w) &= m(\partial_td^w,\phi^w).
	\end{align}	
	This part of the proof is structured in three subparts, the first part derives an estimate from the equation (\ref{error_der}a), the second part derives one from the equation (\ref{error_der}b) and the third part combines these estimates. This structure allows to have intermediate results, which then is used for the nonlinear case as well.\\
	\end{subequations}
	\textit{ Part a:}\\
	We test (\ref{error_der}a) with $\partial_t e^u$, (\ref{error_der}b) with $e^w$ and add them up to obtain
		\begin{eqnarray*}
			m(\partial_t e^u, \partial_t e^u)+m(\partial_t e^w,e^w)&=&m(d^u,\partial_t e^u)+m(\partial_td^w,e^w).
		\end{eqnarray*}
		For the left-hand side of the equation we use $m(\pa_t v,v) =\frac{1}{2}\frac{\mathrm{d}}{\mathrm{d}t}|v|^2 $ and for the first term on the right-hand side (\ref*{dual}), which then yields
		\begin{align*}
		\frac{\mathrm{d}}{\mathrm{d}t}|e^w|^2+|\partial_te^u|^2 & \leq ||d^u||_\ast||\partial_te^u||+m(\partial_td^w,e^w).
		\end{align*}
		We integrate from $0$ to $t$ and obtain
		\begin{align*}
	\frac{1}{2}|e^w(t)|^2+	\int_0^t|\pa_t e^u(s)|^2\mathrm{d}s &\leq \frac{1}{2}|e^w(0)|^2 + \int_0^t||d^u(s)||_{\ast}||\pa_t e^u(s)||\mathrm{d}s\\
		& \qquad +\int_0^tm(\pa_td^w(s),e^w(s))\mathrm{d}s.
		\end{align*}
		The $\eps$-Young inequality yields
		\begin{align}\label{ee2an}
		|e^w(t)|^2+\int_0^t|\pa_t e^u(s)|^2\mathrm{d}s&\leq |e^w(0)|^2+\eps\int_0^t||\pa_t e^u(s)||^2\mathrm{d}s +\nonumber \frac{1}{\eps}\int_0^t||d^u(s)||_{\ast}^2\mathrm{d}s\\
		& \qquad +2\int_0^tm(\pa_td^w(s),e^w(s))\mathrm{d}s.
		\end{align}
		This estimate is an intermediate stage from which we will continue in the nonlinear case.
		
		 For the linear case we now estimate the last term with (\ref{dual}) and the Young inequality
		\begin{align}\label{ee2a}
	|e^w(t)|^2 +	\int_0^t|\pa_t e^u(s)|^2\mathrm{d}s&\leq |e^w(0)|^2 +\eps\int_0^t||\pa_t e^u(s)||^2\mathrm{d}s+\int_0^t||e^w(s)||^2\mathrm{d}s\nonumber\\
	& \qquad+ \frac{1}{\eps}\int_0^t||d^u(s)||_{\ast}^2\mathrm{d}s +\int_0^t||\pa_td^w(s)||_\ast^2\mathrm{d}s .
		\end{align}
	\textit{Part b:}\\
	We test (\ref{error_der}a) with $\partial_t e^w$, (\ref{error_der}b) with $\pa_t e^u$ and subtract them to obtain	
	\begin{eqnarray*}
		a(e^w,\partial_t e^w)+a(\partial_t e^u , \partial_t e^u)&=&m(d^u,\partial_t e^w)-m(\partial_td^w,\partial_t e^u).
	\end{eqnarray*}	
	Because $\partial_t e^w$ cannot be absorbed we rewrite $m(d^u,\partial_t e^w)$, by using the product rule, as follows:
	\begin{equation*}
	m(d^u,\partial_t e^w)=\frac{\mathrm{d}}{\mathrm{d}t}m(d^u,e^w)-m(\partial_td^u,e^w),
	\end{equation*}
	therefore the equation above changes into
	\begin{eqnarray*}
		a(e^w,\partial_t e^w)+a(\partial_t e^u , \partial_t e^u)&=&\frac{\mathrm{d}}{\mathrm{d}t}m(d^u,e^w)-m(\partial_td^u,e^w)-m(\partial_td^w,\partial_t e^u).
	\end{eqnarray*}
	With $a(\pa_t v,v) =\frac{1}{2}\frac{\mathrm{d}}{\mathrm{d}t}||v||_a^2 $ and integration we obtain
	\begin{align*}
	\frac{1}{2}||e^w(t)||_{a}^2+\int_0^t||\pa_t e^u(s)||_{a}^2\mathrm{d}s &= \frac{1}{2}||e^w(0)||_{a}^2 +m(d^u(t),e^w(t))-m(d^u(0),e^w(0))\\
	& \qquad- \int_0^t m(\partial_td^u(s),e^w(s))\mathrm{d}s-\int_0^tm(\partial_td^w(s),\partial_t e^u(s))\mathrm{d}s.
	\end{align*}
	This is estimated as follows
	\begin{align*}
	\frac{1}{2}||e^w(t)||_{a}^2+\int_0^t||\pa_t e^u(s)||_{a}^2\mathrm{d}s &\leq \frac{1}{2}||e^w(0)||_{a}^2 +m(d^u(t),e^w(t))+|m(d^u(0),e^w(0))|\\
	& \qquad+\Big| \int_0^t m(\partial_td^u(s),e^w(s))\mathrm{d}s\Big|\\
	& \qquad+\Big|\int_0^tm(\partial_td^w(s),\partial_t e^u(s))\mathrm{d}s\Big|.
	\end{align*}
	We estimate the inner product $m(\cdot,\cdot)$ with (\ref{dual}), which yields
	\begin{align*}
 \frac{1}{2}||e^w(t)||_{a}^2+\int_0^t||\pa_t e^u(s)||_{a}^2\mathrm{d}s &\leq \frac{1}{2}||e^w(0)||_{a}^2 + ||d^u(0)||_{\ast}||e^w(0)||+||d^u(t)||_{\ast}||e^w(t)||\\
	& \qquad + \int_0^t||d^u(s)||_{\ast}||e^w(s)||\mathrm{d}s\\
	& \qquad +\Big|\int_0^tm(\pa_td^w(s),\pa_t e^u(s))\mathrm{d}s\Big|.
	\end{align*}
	With Young's and $\eps$-Young's inequality we obtain
	\begin{align}\label{ee2bn}
	||e^w(t)||_{a}^2+\int_0^t||\pa_t e^u(s)||_{a}^2\mathrm{d}s &\leq ||e^w(0)
	||_{a}^2 +||e^w(0)||^2+\eps'||e^w(t)||^2+ ||d^u(0)||_{\ast}^2\nonumber\\
	& \qquad +\frac{1}{\eps '}||d^u(t)||_{\ast}^2+\int_0^t||e^w(s)||_h^2\mathrm{d}s+ \int_0^t||d^u(s)||_{\ast}^2\mathrm{d}s\nonumber\\
	& \qquad +\Big|\int_0^tm(\pa_td^w(s),\pa_t e^u(s))\mathrm{d}s\Big|.
	\end{align}
	This intermediate estimate will be used for the nonlinear case. 
	
	We now continue to estimate the last term. By (\ref{dual}) and the $\eps$-Young inequality we obtain
	\begin{align}\label{ee2b}
	||e^w(t)||_{a}^2+\int_0^t||\pa_t e^u(s)||_{a}^2\mathrm{d}s &\leq ||e^w(0)||_{a}^2+||e^w(0)||^2 +\eps'||e^w(t)||^2+\nonumber ||d^u(0)||_{\ast}^2\\
	& \qquad +\frac{1}{\eps' }||d^u(t)||_{\ast}^2+\eps \int_0^t||\pa_t e^u(s)||^2\mathrm{d}s+\nonumber \int_0^t||e^w(s)||^2\mathrm{d}s\\
	& \qquad\int_0^t||d^u(s)||_{\ast}^2\mathrm{d}s +\frac{1}{\eps}\int_0^t||\pa_td^w(s)||_\ast^2\mathrm{d}s.
	\end{align}
	
	\textit{Part c: Combination.}
	
	We add the inequalities (\ref{ee2a}) and (\ref{ee2b}) and obtain
	\begin{align*}
	\int_0^t||\pa_t e^u(s)||_{a^\ast}^2\mathrm{d}s+ ||e^w(t)||_{a^\ast}^2 &\leq ||e^w(0)||_{a^\ast}^2+||e^w(0)||^2 +\eps'||e^w(t)||^2+\nonumber ||d^u(0)||_{\ast}^2\\
	& \qquad+\frac{1}{\eps' }||d^u(t)||_{\ast}^2 +2\eps \int_0^t||\pa_t e^u(s)||^2\mathrm{d}s\\
	& \qquad +2\int_0^t||e^w(s)||^2\mathrm{d}s+\nonumber \Big(1+\frac{1}{\eps}\Big)\int_0^t||d^u(s)||_{\ast}^2\mathrm{d}s\\
	& \qquad  +\Big(1+\frac{1}{\eps}\Big)\int_0^t||\pa_td^w(s)||_\ast^2\mathrm{d}s.
	\end{align*}
	The equivalence of the norms $||\cdot||_{a^\ast}$  and $||\cdot||$ (Lemma \ref{equi}) is applied, which yields
	\begin{align*}
	c_0\int_0^t||\pa_t e^u(s)||^2\mathrm{d}s+ c_0||e^w(t)||^2 &\leq c_1||e^w(0)||^2+||e^w(0)||^2 +\eps'||e^w(t)||^2+\nonumber ||d^u(0)||_{\ast}^2\\
	& \qquad+\frac{1}{\eps' }||d^u(t)||_{\ast}^2 +2\eps \int_0^t||\pa_t e^u(s)||^2\mathrm{d}s\\
	& \qquad +2\int_0^t||e^w(s)||^2\mathrm{d}s+\nonumber \Big(1+\frac{1}{\eps}\Big)\int_0^t||d^u(s)||_{\ast}^2\mathrm{d}s\\
	& \qquad  +\Big(1+\frac{1}{\eps}\Big)\int_0^t||\pa_td^w(s)||_\ast^2\mathrm{d}s.
	\end{align*}
	To absorb the third and sixth term we choose $\eps':=\frac{c_0}{2}$ and $\eps:=\frac{c_0}{4}$ and we obtain
	\begin{align*}
	||e^w(t)||^2 +\int_0^t||\pa_t e^u(s)||^2\mathrm{d}s &\leq c\Big(||e^w(0)||^2 +\nonumber ||d^u(0)||_{\ast}^2+||d^u(t)||_{\ast}^2\\
	& \qquad +\int_0^t||e^w(s)||^2\mathrm{d}s +\nonumber \int_0^t||d^u(s)||_{\ast}^2\mathrm{d}s +\int_0^t||\pa_td^w(s)||_\ast^2\mathrm{d}s\Big).
	\end{align*}
	The Gronwall inequality yields
	\begin{align}\label{ee2}
	||e^w(t)||^2 +\int_0^t||\pa_t e^u(s)||^2\mathrm{d}s &\leq c\Big(||e^w(0)||^2 +\nonumber ||d^u(0)||_{\ast}^2+||d^u(t)||_{\ast}^2\\
	& \qquad + \int_0^t||d^u(s)||_{\ast}^2\mathrm{d}s +\int_0^t||\pa_td^w(s)||_\ast^2\mathrm{d}s\Big).
	\end{align}
	\textit{(iii) Combining the energy estimates}
	
	Adding the inequalities (\ref{ee1}) and (\ref{ee2}) yields 
	\begin{align}\label{ee}
	||&e^u(t)||^2+ ||e^w(t)||^2+\nonumber\int_0^t||\pa_t e^u(s)||^2\mathrm{d}s+ \int_0^t ||e^w(s)||^2\mathrm{d}s  \\ & \leq c\Big(||e^u(0)||^2+||e^w(0)||^2+||d^u(0)||_{\ast}^2+||d^u(t)||_{\ast}^2+\nonumber
	\int_0^t||d^u(s)||_\ast^2\mathrm{d}s\\
	& \qquad  +\int_0^t||d^w(s)||_\ast^2\mathrm{d}s+\int_0^t||\pa_td^w(s)||_\ast^2\mathrm{d}s \Big) +\frac{1}{2}\int_0^t||\partial_t e^u(s)||^2\mathrm{d}s.
	\end{align}
	With the absorption of the last term on the right-hand side we obtain the stated result.
	\end{proof}
	\begin{remark}
With a similar but easier proof by energy estimates the solution $(u, w)\in C^1([0,T];V)^2$  of the weak formulation (\ref{eq:abstract_weak}) with $f \in C^1([0,T];H)$ satisfies the bound
	\begin{align*}
	||&u(t)||^2+||w(t)||^2+\int_0^t||\partial_t u(s)||^2 \mathrm{d}s  +\int_0^t||w(s)||^2\mathrm{d}s\\ & \leq ||u(0)||^2+||w(0)||^2+\int_0^t ||f(s)||_\ast^2\mathrm{d}s+4\int_0^t||\partial_t f(s)||_\ast^2 \mathrm{d}s.
	\end{align*}
\end{remark}

\newpage
\chapter{Semidiscrete error analysis}
In this section we study the spatial discretization with linear finite elements of the linear Cahn--Hilliard equation with dynamic boundary conditions. We present a semidiscrete weak formulation, which still fits into the abstract framework introduced in the former chapter. Therefore the stability of the semidiscretization follows from the perturbation result Proposition \ref{energy_est}. The bulk and the surface are both triangulated, the spaces $V$ and $H$ are replaced by finite dimensional spaces, the inner product and the bilinear form by semidiscrete analogues. To prove convergence in this setting we combine stability bounds with consistency estimates.  We will present several error bounds for interpolation, the Ritz map and the geometric approximation before. The stability analysis conducted in this chapter can be applied to the linear variant of the Cahn--Hilliard equation with Cahn--Hilliard boundary conditions, but we will work within the abstract formulation, which makes it possible to use this results for other equations as well.

The domain $\Omega$ is assumed to be smooth and is approximated by a polyhedral domain $\Omega_h$, which is triangulated with a mesh width  $h$. The triangulation $\mathcal{T}_h$ is supposed to be quasi-uniform. The surface $\Gamma$ is approximated by the boundary of $\Omega_h$, $\Gamma_h:=\pa \Omega_h$. The domain $\Omega_h$ is constructed such that the vertices of $\Gamma_h$ are on $\Gamma$.  For each node $x_k$ there is a basis function $\phi_k$ such that $\phi_j(x_k)=\delta_{jk}$ for $j,k=1,\ldots,N$. The basis functions are continuous and linear on each  element. We define $V_h:=\langle \phi_1,\phi_2,\ldots,\phi_N\rangle$ as the span of the nodal basis functions.  The restrictions of the basis functions to the boundary   
span the boundary finite element space, i.e. $S_h=\langle \gamma_h \phi_1,\ldots,\gamma_h \phi_N\rangle$, with the trace operator $\gamma_h$ on $\Gamma_h$.
The discretization of the second Hilbert space $H$ is the same as for $V$, $H_h:=V_h$.

This semidiscrete variational formulation is a special case of the abstract variational formulation.  
The inner product and bilinear form on $V_h$ are denoted by $m_h(\cdot,\cdot)$ and $a_h(\cdot,\cdot)$ and have the same properties as $m(\cdot,\cdot)$ and $a(\cdot,\cdot)$ in the abstract formulation.   The continuous norms $|\cdot|$ and $||\cdot||$ are replaced by $||\cdot||_h$ and $|\cdot|_h$, where we define for $v_h\in V_h$
$$|v_h|^2_h:=m_h(v_h,v_h) $$
and
$$||v_h||^2_h:=a_h(v_h,v_h). $$

We define the following notations of discrete semi-norms and norms on $V_h$ analogous to the continuous case:
$$||v||_{a,h}^2:=a_h(v_h,v_h)\qquad \text{for } v_h\in V_h $$
and the semidiscrete dual norm
\begin{align}\label{dual_norm}
||\phi||_{\ast,h} := \sup_{v \in V\setminus \lbrace 0\rbrace}\frac{m_h(\phi,v)}{||v||_h}.
\end{align}

Finally to formulate the semidiscrete variational equation system, we need a finite element approximation of the function $f(\cdot,t)\in  H$. This will be done via the finite element interpolation which, for a continuous function $f$, is defined by $$\tilde{I}_h f(\cdot,t):= \sum_{i=1}^{N}f(x_i,t)\phi_i \quad \in V_h. $$
The weak formulation can then be written as follows: Find $(u_h,w_h)\in C^1([0,T],V_h)\times L^2(0,T;V_h)$ such that the following holds
\begin{subequations}\label{weak_semi}
\begin{align}
m_h(\partial_t u_h, \phi^u_h)+a_h(w_h,\phi^u_h)&=0 \qquad & \forall \phi^u_h\in V_h,\\
m_h(w_h,\phi^w_h)-a_h(u_h , \phi^w_h) &= m_h(\tilde{I}_hf,\phi^w_h) \qquad & \forall \phi^w_h\in V_h,
\end{align}
\end{subequations}
with initial condition
\begin{align*}
m_h(u_h(0),\phi_h^u) & = m_h(\tilde{I}_hu_0,\phi_h^u) \qquad & \forall \phi^u_h\in V_h,
\end{align*}
for initial values $u_0\in H$.

We present $u_h(t)$ and $w_h(t)$ with respect to the basis $\phi_1,\phi_2,\ldots,\phi_N$ as
$$ u_h(t)=\sum_{i=1}^N u_i(t)\phi_i,\qquad w_h(t)=\sum_{i=1}^N w_i(t)\phi_i.$$
With the time dependent coefficient vectors $\bfu(t)=(u_1(t),u_2(t),\ldots,u_N(t))^T\in\R^N $ and $\bfw(t)=(w_1(t),w_2(t),\ldots,w_N(t))^T\in\R^N $ we have the following matrix-vector formulation
\begin{subequations}\label{m_v_formulation}
\begin{eqnarray}
\bfM\dot{\text{\textbf{u}}}(t) + \bfA\bfw(t) &=& 0\\
\bfM \bfw(t) - \bfA\bfu(t) &=& \bfb(t),
\end{eqnarray}
\end{subequations}
with $\bfM_{ij}:=m_h(\phi_i,\phi_j), \bfA_{ij}:= a_h(\phi_i,\phi_j), \bfb_i(t):= m_h(\tilde{I}_h f(t),\phi_i)$.
This is a system of ordinary differential equations coupled with a system of linear equations, a differential algebraic equation system. 

For the semidiscrete weak formulation of the linear Cahn--Hilliard equation with Cahn--Hilliard boundary conditions $m_h(\cdot,\cdot)$ and $a_h(\cdot,\cdot)$ are defined as follows:
\begin{align*}
m_h(u_h,v_h) &:= \int_{\Omega_h} u_hv_h \mathrm{d}x+\int_{\Gamma_h} (\gamma_h u_h) (\gamma_h v_h)\mathrm{d}\sigma_h\\
a_h(u_h,v_h) &:= \int_{\Omega_h}  \nabla u_h\cdot \nabla v_h \mathrm{d}x + \int_{\Gamma_h} \nabla_{\Gamma_h}  u_h\cdot \nabla_{\Gamma_h}  v_h \mathrm{d}\sigma_h,
\end{align*} 
where $\nabla_{\Gamma_h}$, the discrete tangential gradient, is defined in a piecewise sense by $\nabla_{\Gamma_h} (\gamma v_h)   =(I-\nu_h\nu_h^T)\gamma (\nabla v_h)$, where $\nu_h$ is the unit outward normal on $\Gamma_h$. For brevity we write $\nabla_{\Gamma_h} v_h$ instead of $\nabla_{\Gamma_h} (\gamma v_h)$. $\sigma_h$ is the discrete surface measure.

To be able to compare a semidiscrete solution to the exact solution we transform the semidiscrete solution into a function defined on $\Omega$, therefore we \textit{lift} it to the surface.
We define a lift of a function  $v_h:\Gamma_h \to \mathbb{R}$ on the surface by $v_h^l:\Gamma \to \mathbb{R}$ with $v_h^l(p)=v_h(x)$, where $x-p$ is orthogonal to the tangent space of $\Gamma$. Such a point x is unique, see \textit{Dzuik\&Elliot (2007)}, \cite{dande}. For a function on the bulk $v_h:\Omega_h \to \mathbb{R}$   we define its lift $v_h^l:\Omega \to \mathbb{R}$, via $v_h^l:=v_h\circ G_h^{-1}$.  The function $G_h:\Omega_h \to \Omega$ is defined piecewise for every element $E\in\mathcal{T}_h$ 
$$ G_h|_E (x) = F_e((F_E)^{-1}(x)) \qquad x\in E.$$
The function $F_E$ is the standard affine linear map between the reference element and $E$. The function $F_e$  maps from the reference element onto the smooth element $e\subset \Omega$ and is $\mathcal{C}^1$. For details, see  \textit{Elliot\&Ranner (2013)}, \cite[Section]{er13}. On the surface both definitions coincide.

Now it is possible to formulate error bounds for the errors between the exact solution of the abstract problem \eqref{eq:abstract_weak} and the solution of its semidiscretization \eqref{weak_semi}. We state two estimates in different norms for the weak formulation of the Cahn--Hilliard equation with dynamic boundary conditions. For the error in the $V$-norm we obtain a first-order bound and in the $H$-norm an optimal second-order bound. For both bounds we  require that $||u_h(0)-\tilde{R}_h u(0)||\leq Ch^2$ and $||w_h(0)-\tilde{R}_h w(0)||\leq Ch^2$, where $(\tilde{R}_h)^l$ is the Ritz map introduced later, see Section \ref{sec_ritz}. Therefore we will choose our initial values $u_h(0)=\tilde{R}_h u(0)$ and $ w_h(0)=\tilde{R}_h w(0) $.
\begin{theorem}\label{convergence}
	Let $(u,w)\in C^2([0,T],\lbrace v\in H^2(\Omega) : \gamma v\in H^2(\Gamma)\rbrace)\times C^1([0,T],\lbrace v\in H^2(\Omega) : \gamma v\in H^2(\Gamma)\rbrace)$ be the solution of (\ref{Cahn_H_bulk} )--(\ref{Cahn_H_suface}), with $f\in C^{1}([0,T],\lbrace v\in H^2(\Omega) : \gamma v\in H^2(\Gamma)\rbrace)$. Then, for sufficiently small $h\leq h_0$ the semidiscrete solution of (\ref{weak_semi})
	$(u_h,w_h)\in C^1([0,T],V_h)^2$ satisfies the following error bounds for all $t$ with $0\leq t\leq T$,
	\begin{align*}
	||&w_h^l(t)-w(t)||^2+||u_h^l(t)-u(t)||^2  \\ \qquad & +\int_0^t||\partial_tu_h^l(s)-\partial_t u(s)||^2\mathrm{d}s +\int_0^t||w_h^l(s)-w(s)||^2\mathrm{d}s  \leq C h^2.
	\end{align*}
	In the weaker norm, $|\cdot |$, we obtain a second-order error bound
	\begin{align*}
	|&w_h^l(t)-w(t)|^2+|u_h^l(t)-u(t)|^2  \\ \qquad & +\int_0^t|\partial_tu_h^l(s)-\partial_t u(s)|^2\mathrm{d}s +\int_0^t|w_h^l(s)-w(s)|^2\mathrm{d}s  \leq C h^4.
	\end{align*}
With
\begin{align*}
C 
& =C'\Big(||u(t)||_2^2+ ||\partial_t u (0)|| ^2+ ||\partial_t u (t)|| ^2 +||w(t)||_2^2+ \int_0^t ||\partial_t u(s)||_2^2 \mathrm{d}s + \int_0^t||\partial_{tt} u(s)||_2^2 \mathrm{d}s\\ & \qquad+ \int_0^t||w(s)||_2^2\mathrm{d}s +\int_0^t||\partial_t w(s)||_2^2\mathrm{d}s+\int_0^t||f(s)||_2 ^2\mathrm{d}s+\int_0^t||\partial_t f(s)||_2 ^2 \mathrm{d}s 
\Big),
\end{align*}
where $C'>0$  depends exponentially on $T$ and is independent from $h$ and $t$.
\end{theorem}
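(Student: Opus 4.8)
The plan is to reduce the convergence estimate to the abstract perturbation result of Proposition \ref{energy_est} by splitting the error through the Ritz map. Writing $\tilde{R}_h$ for the (later defined, see Section \ref{sec_ritz}) Ritz map, I decompose
$$u_h^l - u = \big(u_h^l - (\tilde{R}_h u)^l\big) + \big((\tilde{R}_h u)^l - u\big),$$
and analogously for $w$. The second summand is a purely spatial approximation error of the Ritz map, which the collected lemmata of this chapter bound by $O(h)$ in the $V$-norm and by $O(h^2)$ in the $H$-norm. The first summand, the discrete error $e_h^u := u_h - \tilde{R}_h u$ and $e_h^w := w_h - \tilde{R}_h w$, will be shown to satisfy the abstract error equation system (\ref{abstract_defect}), with $m,a$ and all norms replaced by their discrete analogues $m_h,a_h$ and $\|\cdot\|_h$, $\|\cdot\|_{\ast,h}$, for suitable defects $d^u,d^w$, so that Proposition \ref{energy_est} applies verbatim in the discrete setting.

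To identify the defects I insert $u_h = e_h^u + \tilde{R}_h u$, $w_h = e_h^w + \tilde{R}_h w$ into the semidiscrete formulation (\ref{weak_semi}) and subtract the exact equations (\ref{eq:abstract_weak}) tested with the lift $(\phi_h)^l$. The Ritz map is designed precisely so that the $a_h$-contribution of $\tilde{R}_h u$ reproduces $a(u,(\phi_h)^l)$, so the leading terms cancel and the defects collect exactly three error types: the consistency error of the Ritz map, the geometric approximation error from replacing integrals over $\Omega_h,\Gamma_h$ by those over $\Omega,\Gamma$ after lifting (i.e. $m_h$ versus $m$ and $a_h$ versus $a$), and the finite element interpolation error of $f$ through $\tilde{I}_h f$. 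Since the Ritz map is linear and time-independent it commutes with $\partial_t$, so $\partial_t \tilde{R}_h u = \tilde{R}_h \partial_t u$; this lets me bound $\partial_t d^u$ and $\partial_t d^w$ by the same estimates applied to $\partial_t u,\partial_t w,\partial_t f$, which is where the $C^2$-in-time regularity of $u$ and the $C^1$-in-time regularity of $w$ and $f$ enter.

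The heart of the argument is then to show that all defects are of order $h^2$ in the weak norm, i.e.
$$\|d^u\|_{\ast,h} + \|\partial_t d^u\|_{\ast,h} + \|d^w\|_{\ast,h} + \|\partial_t d^w\|_{\ast,h} \le C h^2,$$
with a constant involving the $H^2$-norms appearing in the statement. This follows by combining the interpolation, Ritz map, and geometric estimates collected earlier, each of which contributes an $O(h^2)$ term when measured against the dual norm. Feeding these bounds, together with the initial-value bounds $\|e_h^u(0)\|_h,\|e_h^w(0)\|_h \le Ch^2$ guaranteed by the choice $u_h(0)=\tilde{R}_h u(0)$, $w_h(0)=\tilde{R}_h w(0)$, into Proposition \ref{energy_est} yields
$$\|e_h^u(t)\|_h^2 + \|e_h^w(t)\|_h^2 + \int_0^t \|\partial_t e_h^u(s)\|_h^2\,\mathrm{d}s + \int_0^t \|e_h^w(s)\|_h^2\,\mathrm{d}s \le C h^4,$$
so the discrete error is of order $h^2$ in the strong energy norm, one order better than the Ritz approximation error.

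Finally I assemble the two contributions by the triangle inequality, using the equivalence of $\|\cdot\|_h$ with the lifted norm $\|\cdot\|$ (a further geometric estimate, with constant independent of $h$ for $h\le h_0$). In the $V$-norm the total error is dominated by the Ritz term of order $h$, giving the squared bound $Ch^2$; in the weaker $H$-norm the discrete part is controlled by its own energy norm of order $h^2$ via the continuous embedding, while the Ritz part contributes $O(h^2)$ through its $L^2$-estimate, so both pieces are of order $h^2$ and the squared $H$-norm is $O(h^4)$. I expect the main obstacle to be the defect analysis of the second and third paragraphs: controlling the geometric approximation errors — the mismatch between the forms on $\Omega_h,\Gamma_h$ and on $\Omega,\Gamma$, and between the discrete and lifted norms — uniformly in time together with their time derivatives, and verifying that each enters only at the optimal order $h^2$ in the dual norm rather than the naive $h$. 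Once the perturbation result and the approximation lemmata are in place, the remainder is bookkeeping.
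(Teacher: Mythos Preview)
Your proposal is correct and follows essentially the same approach as the paper: the error is split through the Ritz map, the Ritz part is bounded directly by the approximation lemmata, and the discrete part $e_h^u,e_h^w$ is controlled by the semidiscrete version of Proposition~\ref{energy_est} (stated as Proposition~\ref{energy_est_semidiscr}) combined with the $O(h^2)$ consistency bounds of Proposition~\ref{pr_consist}. The only cosmetic differences are the sign convention for $e_h^u$ and that, with the chosen initial data $u_h(0)=\tilde R_h u(0)$, the initial errors are in fact exactly zero rather than merely $O(h^2)$.
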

This convergence theorem is concluded after the subsequent sections: A stability estimate, in Section \ref{stab}, some preparational lemmata concerning geometric approximation, in Section \ref{prep}, and a consistency estimate, in Section \ref{cons}.

\section{Stability}\label{stab}
We consider a projected exact solution $(u_h^\ast,w_h^\ast)\in C^1([0,T],V_h)\times L^2(0,T;V_h)$, which is a projection of the solution of the abstract weak formulation \eqref{eq:abstract_weak} onto the finite element space. This projected exact solution will be later chosen as images of the Ritz map. These perturbed solution is the exact solutions of the following system  (omitting the argument $t$)
\begin{subequations}\label{per_semi}
	\begin{align}
	m_h(\partial_t u_h^\ast, \phi_h^u)+a_h(w_h^\ast,\phi_h^u)&=m_h(d_h^u,\phi_h^u) \qquad &\forall \phi_h^u\in V_h\label{per_semi_a}\\
	m_h(w_h^\ast,\phi_h^w)-a_h(u_h^\ast , \phi_h^w) &= m_h(\tilde{I}_hf,\phi_h^w)+m_h(d_h^w,\phi_h^w)\qquad &\forall \phi_h^w\in V_h,
	\end{align}
\end{subequations}
with some defects $d_h^u(t),d_h^w(t) \in V_h$.

Subtracting the equations of the exact system (\ref{weak_semi}) with the equations of the perturbed system (\ref{per_semi}) and setting $e_h^u:=u_h^\ast-u_h$ and $e_h^w:=w_h^\ast-w_h$ we obtain the error equation system:
\begin{subequations}\label{error_semi}
	\begin{align}
	m_h(\partial_t e_h^u, \phi^u_h)+a_h(e_h^w,\phi^u_h)&=m_h(d_h^u,\phi_h^u) \qquad & \forall \phi^u_h\in V_h\\
	m_h(e_h^w,\phi^w_h)-a_h(e_h^u , \phi^w_h) &= m_h(d_h^w,\phi_h^w) \qquad & \forall \phi^w_h\in V_h.
	\end{align}
\end{subequations}
 The proof of the continuous perturbation result, Proposition \ref{energy_est}, shows the following bound: 
\begin{proposition}\label{energy_est_semidiscr}
	Let $(e^u_h, e^w_h) \in C^1([0,T];V_h)^2$ be the solution of the semidiscrete error equation system \ref{error_semi} with defects $d^u_h, d^w_h\in C^1([0,T];V_h)$, then the following estimate holds for all $t$ with $0<t\leq T$,
	\begin{align*}
	||&e_h^u(t)||_h^2+||e_h^w(t)||_h^2+\int_0^t||\partial_te_h^u(s)||_h^2\mathrm{d}s+\int_0^t||e_h^w(s)||_h^2\mathrm{d}s  \\  & 
	\leq C\Big( || e_h^u(0)||_h^2+||e_h^w(0)||_h^2 +|| d_h^u(0)||_{\ast,h}^2+||d_h^u(s)||_{\ast,h}^2\\
	& \qquad+\int_0^t||d_h^u(s)||_{\ast,h}^2 \mathrm{d}s+\int_0^t||\partial_td_h^u(s)||_{\ast,h}^2\mathrm{d}s+\int_0^t||d_h^w(s)||_{\ast,h}^2\mathrm{d}s+\int_0^t||\partial_td_h^w(s)||_{\ast,h}^2\mathrm{d}s\Big),
	\end{align*}
	with $C>0$ depending on $\alpha $ and $\mu$ and exponentially on $T$, but independent from $h$ and $t$.
\end{proposition}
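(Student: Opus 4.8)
The plan is to recognize that the semidiscrete error system (\ref{error_semi}) is, after replacing the abstract data $m$, $a$, $\|\cdot\|$, $|\cdot|$, $\|\cdot\|_\ast$ by their discrete counterparts $m_h$, $a_h$, $\|\cdot\|_h$, $|\cdot|_h$, $\|\cdot\|_{\ast,h}$, formally identical to the abstract error system (\ref{abstract_defect}). Consequently the three-part energy argument in the proof of Proposition \ref{energy_est} transcribes line by line, and the only genuine task is to confirm that every structural property used there survives in the discrete setting and that the resulting constants are independent of $h$.

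First I would check the discrete hypotheses. Since $m_h$ and $a_h$ are symmetric bilinear forms, the two product-rule identities $m_h(\pa_t v_h, v_h) = \half \frac{\d}{\d t}|v_h|_h^2$ and $a_h^\ast(\pa_t v_h, v_h) = \half \frac{\d}{\d t}\|v_h\|_{a^\ast,h}^2$ remain valid, and this is all that is needed to convert the tested equations into time derivatives of norms. Because $V_h$ is a fixed, time-independent finite-dimensional space, the derivatives $\pa_t e_h^u$ and $\pa_t e_h^w$ of the $C^1$ error functions again belong to $V_h$; this is exactly what makes them admissible test functions and legitimizes testing (\ref{error_semi}), and its time-differentiated form, with these quantities. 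By the standing assumption that $m_h$ and $a_h$ share the properties of $m$ and $a$, the discrete Gårding inequality holds with the same $\alpha,\mu$, so the discrete analogue of Lemma \ref{equi} yields $c_0\|v_h\|_h \le \|v_h\|_{a^\ast,h}\le c_1\|v_h\|_h$ with $h$-independent $c_0,c_1$; the discrete dual-norm inequality $m_h(\phi,v_h)\le \|\phi\|_{\ast,h}\|v_h\|_h$ is immediate from definition (\ref{dual_norm}).

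With these in place I would replay the three parts verbatim. In Part (i) I test (\ref{error_semi}) with $(e_h^u, e_h^w)$ and then with $(e_h^w, \pa_t e_h^u)$; adding the first pair and subtracting the second, the symmetry of $a_h$ and $m_h$ cancels the cross terms precisely as in (\ref{error3}) and (\ref{error6}), and summing leads, after the dual-norm and $\eps$-Young estimates, integration, the norm equivalence and Gronwall, to the discrete version of (\ref{ee1}). In Part (ii) I differentiate the second equation of (\ref{error_semi}) in time --- admissible because $d_h^w, e_h^w\in C^1$ --- and test as in Parts a and b, obtaining the discrete analogues of (\ref{ee2a}) and (\ref{ee2b}); combining them, absorbing with $\eps':=c_0/2$ and $\eps:=c_0/4$, and applying Gronwall reproduces the discrete form of (\ref{ee2}). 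Part (iii) adds the two estimates and absorbs the leftover $\half\int_0^t\|\pa_t e_h^u\|_h^2\,\d s$ term, which gives the claim.

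The main obstacle is not the algebra, which is mechanical, but certifying that no constant depends on $h$. Every constant entering the final $C$ comes from $\alpha$, $\mu$, from $c_0,c_1$ in the norm equivalence, from the fixed choices of $\eps,\eps'$, and from the Gronwall factor; hence $h$-uniformity rests solely on the discrete Gårding inequality and the discrete norm equivalence being uniform in $h$. This is the point worth stressing: that equivalence is obtained from Gårding together with the continuity of $a_h$, \emph{not} from any inverse inequality, and in the Cahn--Hilliard realization the discrete forms inherit $\alpha=\mu=1$ and a continuity constant independent of $h$ exactly as in the continuous case; therefore $c_0,c_1$, and with them $C$, can be taken independent of $h$ and $t$, as asserted.
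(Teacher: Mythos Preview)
Your proposal is correct and follows exactly the paper's own approach: the paper simply states that the proof of the continuous perturbation result, Proposition~\ref{energy_est}, yields this bound, without repeating the argument. Your write-up is in fact more detailed, since you explicitly verify the $h$-uniformity of the constants via the discrete G{\aa}rding inequality and the resulting norm equivalence, points the paper leaves implicit.
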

\section{Preliminary results}\label{prep}
To be able to prove Theorem \ref{convergence} we first collect error bounds related to surface and boundary approximation. Starting with the error made by interpolation, followed by estimates on geometric approximation errors, we will conclude this preparational  section by introducing the Ritz map and present its error bounds. These are proven for the setting of the Cahn--Hilliard equation with dynamic Cahn--Hilliard boundary condition. 
\subsection{Interpolation}
We consider the piecewise linear finite element interpolation operator $\tilde{I}_h$, which was introduced in the beginning of this chapter. Now we consider its lift and prove error bounds for that lifted interpolation.
For the interpolation $\tilde{I}_hv \in V_h$ of a continuous function $v$ we define $I_hv:=(\tilde{I}_hv )^l \in V_h^l$. 
\begin{lemma}[Interpolation errors]\label{inter}
	Let $v\in  H^2(\Omega)$ such that $\gamma v \in H^2(\Gamma)$. Then we have the following error estimates for interpolation in the bulk and respectively for interpolation on the surface
	\begin{align}
	&||v-I_hv||_{L^2(\Omega)}+h||\nabla(v-I_hv)||_{L^2(\Omega)}  \leq {C}h^2 ||v||_{H^2(\Omega)}\label{interpolation_bulk} \\
		 &||\gamma(v-I_hv)||_{L^2(\Gamma)}+h||\nabla_\Gamma(v-I_hv)||_{L^2(\Gamma)}  \leq {C}h^2 ||\gamma v||_{H^2(\Gamma)}.\label{interpolation_surface} 
		 \end{align}
		 For $v\in L^\infty (\Omega)$ such that $(\gamma v)\in L^\infty (\Gamma)$ we obtain
		 \begin{align}
		&||v-I_hv||_{L^\infty(\Omega)}  \leq {C}||v||_{L^\infty(\Omega)}\label{inter_infty_bulk} \\
		& ||\gamma(v-I_hv)||_{L^\infty(\Gamma)}  \leq {C}||\gamma v||_{L^\infty(\gamma)}\label{interpolation_infty_surface}
		.
	\end{align}
\end{lemma}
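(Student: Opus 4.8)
The plan is to establish all four estimates element by element on a reference simplex and then transfer them, first to the physical elements of the triangulation and finally to the smooth domain $\Omega$ and surface $\Gamma$ via the lift. The two $L^2/H^1$ bounds \eqref{interpolation_bulk} and \eqref{interpolation_surface} are the classical first-order interpolation estimates for piecewise linear elements, whereas the two $L^\infty$ bounds \eqref{inter_infty_bulk} and \eqref{interpolation_infty_surface} are merely stability estimates for the nodal interpolant and require no approximation theory at all.

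For \eqref{interpolation_bulk} I would first prove the estimate on the polyhedral approximation $\Omega_h$ and then pass to $\Omega$. On a single element $E\in\mathcal{T}_h$ I transform to the reference simplex $\hat E$ via the affine map $F_E$. Since $\tilde{I}_h$ reproduces affine functions, the Bramble--Hilbert lemma on $\hat E$ gives $\|\hat v-\hat I_h\hat v\|_{H^1(\hat E)}\le \hat C\,|\hat v|_{H^2(\hat E)}$. Tracking the scaling factors of $F_E$ under the quasi-uniformity assumption, so that $\|F_E'\|\sim h$ and $\|(F_E')^{-1}\|\sim h^{-1}$, yields the local bounds $\|v-\tilde I_h v\|_{L^2(E)}\le Ch^2|v|_{H^2(E)}$ and $\|\nabla(v-\tilde I_h v)\|_{L^2(E)}\le Ch\,|v|_{H^2(E)}$; summing the squares over all $E$ produces the asserted global bound on $\Omega_h$. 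Finally, the lift $v_h^l=v_h\circ G_h^{-1}$ changes the $L^2$- and $H^1$-norms only by a factor $1+O(h)$, because the Jacobian of $G_h$ is uniformly bounded above and below and converges to the identity as $h\to 0$; this transfers the estimate from $\Omega_h$ to $\Omega$ with an adjusted constant, and absorbing $|v|_{H^2}\le\|v\|_{H^2(\Omega)}$ finishes \eqref{interpolation_bulk}.

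The surface estimate \eqref{interpolation_surface} follows the same three steps, now on the triangulated surface $\Gamma_h$ whose vertices lie on $\Gamma$. The local Bramble--Hilbert argument is applied on the flat surface triangles, giving the analogues of the bulk bounds with $\nabla_\Gamma$ in place of $\nabla$ and $\|\gamma v\|_{H^2(\Gamma)}$ on the right-hand side; the transfer from $\Gamma_h$ to $\Gamma$ uses the orthogonal-projection lift described before Lemma \ref{inter}, for which the surface-measure distortion and the relation between $\nabla_{\Gamma_h}$ and $\nabla_\Gamma$ are again of the form $1+O(h)$. Here I would invoke the corresponding surface-interpolation results in \cite{dande} and \cite{er13} for the precise geometric constants rather than reproving them.

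The two $L^\infty$ bounds are immediate from the stability of nodal interpolation: since the piecewise linear hat functions are nonnegative and form a partition of unity, $\|\tilde I_h v\|_{L^\infty(\Omega_h)}\le\|v\|_{L^\infty(\Omega_h)}$, hence $\|v-I_h v\|_{L^\infty(\Omega)}\le\|v\|_{L^\infty(\Omega)}+\|I_h v\|_{L^\infty(\Omega)}\le C\|v\|_{L^\infty(\Omega)}$, and likewise on the surface. The hard part of the whole lemma is therefore not the approximation theory, which is standard, but the careful bookkeeping of the geometric approximation: one must verify that lifting between $\Omega_h,\Gamma_h$ and $\Omega,\Gamma$ perturbs each relevant norm only by factors tending to $1$, so that the powers of $h$ are preserved and the constants remain $h$-independent for $h\le h_0$.
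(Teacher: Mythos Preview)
Your approach is correct and, in fact, more detailed than what the paper provides: the paper does not give a proof of this lemma at all but simply cites the literature---\cite[Proposition~5.4]{er13} for \eqref{interpolation_bulk}, \cite{d88} for \eqref{interpolation_surface}, \cite[Section~4.4]{brenner} for \eqref{inter_infty_bulk}, and \cite[Proposition~2.7]{demlow} for \eqref{interpolation_infty_surface}. The arguments you sketch (Bramble--Hilbert on the reference element, scaling via quasi-uniformity, summation, then transfer through the lift with $1+O(h)$ distortion of norms; and the partition-of-unity stability bound for the $L^\infty$ estimates) are precisely the standard proofs that those references carry out, so your proposal and the cited sources agree in substance.
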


For the proof of (\ref{interpolation_bulk}) we refer to \textit{Elliot\&Ranner (2013)}, \cite[Propositiom ~ 5.4]{er13}, for (\ref{interpolation_surface}) to \textit{Dzuik (1988)}, \cite{d88}, for (\ref{inter_infty_bulk}) to \textit{Brenner \& Scott}\cite[Section ~ 4.4]{brenner},  and for (\ref{interpolation_infty_surface}) to \textit{Demlow (2013)}, \cite[Proposition ~2.7]{demlow}.

By adding estimates \eqref{interpolation_bulk} and \eqref{interpolation_surface} from Lemma \ref{inter} we obtain
\begin{equation}\label{interpolation_error}
|v-I_hv|\leq Ch^2||v||_2,
\end{equation}
and 
\begin{equation}\label{interpolation_error2}
||v-I_hv||\leq Ch||v||_2.
\end{equation}
\subsection{Geometric approximation error}
Due to the variational crime, $V_h\nsubseteq V$, another type of error has to be taken into account, the error made by the discrete versions of the inner product and the bilinear form.\\
The layer of lifted elements which have a boundary face is denoted by $B_h^l\subseteq \Omega$, see  \textit{Elliot\&Ranner (2013)}, \cite[Section 4.2 ]{er13}.
\begin{lemma} For all $v\in H^1(\Omega)$ the following estimate holds:
	$$||v||_{L^2(B_h^l)}\leq Ch^{\frac{1}{2}}||v||_{H^1(\Omega)} $$
\end{lemma}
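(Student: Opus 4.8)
The plan is to exploit that $B_h^l$ is a thin boundary layer of width $O(h)$ and to reduce the estimate to a one-dimensional Sobolev embedding along normal rays. First I would record the geometric input: since the vertices of $\Gamma_h$ lie on $\Ga$ and the triangulation $\Tmesh$ is quasi-uniform with mesh width $h$, every element touching the boundary has diameter at most $ch$, and hence its lift stays within distance $ch$ of $\Ga$. Consequently
$$B_h^l \subseteq S_\delta := \{x \in \Om : \mathrm{dist}(x,\Ga) < \delta\}, \qquad \delta := ch,$$
so it suffices to prove the strip estimate $\|v\|_{L^2(S_\delta)} \le C\delta^{1/2}\|v\|_{H^1(\Om)}$ for all $\delta$ small enough.

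Next I would introduce normal (tubular) coordinates. Because $\Om$ is smooth and bounded, there is a fixed $L>0$ such that the map $\Phi:\Ga \times [0,L] \to \overline\Om$, $\Phi(x,s) = x - s\,\nu(x)$, is a diffeomorphism onto a neighborhood $S_L$ of $\Ga$, with Jacobian $J(x,s)$ bounded above and below by positive constants uniformly in $(x,s)$. For $\delta \le L$ this gives, by the change of variables and Fubini,
$$\|v\|_{L^2(S_\delta)}^2 = \int_\Ga \int_0^\delta |v(\Phi(x,s))|^2 J(x,s)\,\d s\,\dsigma(x) \le C \int_\Ga \int_0^\delta |g_x(s)|^2 \,\d s\,\dsigma(x),$$
where $g_x(s) := v(\Phi(x,s))$. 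For almost every $x \in \Ga$ the slice $g_x$ lies in $H^1(0,L)$, and its derivative satisfies $|g_x'(s)| = |\nu(x)\cdot \nb v(\Phi(x,s))| \le |\nb v(\Phi(x,s))|$.

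Then I would apply the one-dimensional embedding $H^1(0,L) \hookrightarrow L^\infty(0,L)$, whose constant $C_L$ depends only on the fixed length $L$, to write
$$\int_0^\delta |g_x(s)|^2\,\d s \le \delta\,\|g_x\|_{L^\infty(0,L)}^2 \le C_L\,\delta\int_0^L \big(|g_x(s)|^2 + |g_x'(s)|^2\big)\,\d s,$$
which produces the factor $\delta = ch$ explicitly. Integrating over $\Ga$, using once more that $J$ is bounded below to recognize the right-hand side as a constant times $\|v\|_{H^1(S_L)}^2 \le \|v\|_{H^1(\Om)}^2$, and taking square roots, I obtain $\|v\|_{L^2(S_\delta)} \le C h^{1/2}\|v\|_{H^1(\Om)}$; combined with $B_h^l \subseteq S_\delta$ this is the claim.

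The main obstacle is the rigorous reduction to the one-dimensional inequality: one must justify that a generic $v \in H^1(\Om)$ restricts, after the smooth change of variables $\Phi$, to a function whose almost-every normal slice belongs to $H^1(0,L)$ with the stated control $|g_x'|\le|\nb v\circ\Phi|$. This is a Fubini/absolute-continuity argument in the transformed coordinates, and it relies on the diffeomorphism $\Phi$ being fixed independently of $h$ and on the uniform two-sided Jacobian bounds, both guaranteed by the smoothness of $\Ga$ and by choosing $L$ inside the tubular neighborhood. Everything else is bookkeeping of fixed geometric constants.
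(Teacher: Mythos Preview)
Your argument is correct and is the standard way to prove this boundary-strip estimate. Note, however, that the paper does not actually supply its own proof of this lemma: it simply cites \textit{Elliott \& Ranner (2013)}, Lemma~6.3. The proof found there proceeds exactly along the lines you sketch --- contain $B_h^l$ in a tubular strip $S_\delta$ of width $\delta=ch$, pass to normal coordinates $\Phi(x,s)=x-s\nu(x)$, and use the one-dimensional embedding $H^1(0,L)\hookrightarrow L^\infty(0,L)$ slice by slice to extract the factor $\delta^{1/2}$. So your proposal matches the cited argument; there is nothing to add beyond noting that the inclusion $B_h^l\subseteq S_{ch}$ is precisely what is shown in \cite[Lemma~4.10]{er13}, and the slice-wise Fubini step for $H^1$ functions is the routine absolute-continuity-on-lines characterization of Sobolev functions after a fixed diffeomorphism.
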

For the proof see \textit{Elliot \&Ranner(2013)} \cite[ Lemma 6.3]{er13}. 

\begin{lemma}[Geometric Approximation Error]\label{lemma_geom} 
	Let $v_h, w_h\in V_h$ then the following holds
	\begin{align}
		|a(v^l_h,w_h^l)-a_h(v_h,w_h)| & \leq Ch||\nabla v_h^l||_{L^2(B_h^l)}||\nabla w_h^l ||_{L^2(B_h^l)}\nonumber\\ & \quad +Ch^2(||\nabla v_h^l ||_{L^2(\Omega)}||\nabla w_h^l ||_{L^2(\Omega)} +||\nabla_\Gamma v_h^l ||_{L^2(\Gamma)}||\nabla_\Gamma w_h^l ||_{L^2(\Gamma)})\label{geom}\\
		|m(v^l_h,w_h^l)-m_h(v_h,w_h)| & \leq Ch||v_h^l||_{L^2(B_h^l)}||w_h^l ||_{L^2(B_h^l)}\nonumber\\ & \quad +Ch^2(||v_h^l ||_{L^2(\Omega)}||w_h^l ||_{L^2(\Omega)} +||\gamma v_h^l ||_{L^2(\Gamma)}||\gamma w_h^l ||_{L^2(\Gamma)})\label{geom2}
	\end{align}
\end{lemma}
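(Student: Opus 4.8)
The plan is to treat the bulk and surface contributions to $a$ and $m$ separately, since both bilinear forms split as a sum of an $\Omega$-integral and a $\Gamma$-integral, and the two pieces call for entirely different geometric arguments. Throughout I would pull the continuous integrals back onto the discrete domains $\Omega_h$ and $\Gamma_h$ by means of the lift maps, so that each difference with its discrete counterpart reduces to an integral of a single geometric perturbation factor tested against the discrete integrands, after which only Cauchy--Schwarz is needed.

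For the bulk part of $a$ I would change variables $x=G_h(y)$ in $\int_\Omega \nabla v_h^l\cdot\nabla w_h^l\,\mathrm{d}x$. Writing $J_h:=|\det DG_h|$ and using the chain rule $\nabla_x v_h^l=(DG_h)^{-T}\nabla_y v_h$, the pulled-back integrand becomes $J_h (DG_h)^{-1}(DG_h)^{-T}\nabla v_h\cdot\nabla w_h$, so that
$$\int_\Omega \nabla v_h^l\cdot\nabla w_h^l\,\mathrm{d}x-\int_{\Omega_h}\nabla v_h\cdot\nabla w_h\,\mathrm{d}y=\int_{\Omega_h}\big(J_h (DG_h)^{-1}(DG_h)^{-T}-I\big)\nabla v_h\cdot\nabla w_h\,\mathrm{d}y.$$
Denote the matrix in parentheses by $A_h$. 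The estimate then rests entirely on the pointwise bounds $\|A_h\|_{L^\infty(B_h)}\leq Ch$ on the boundary layer and $\|A_h\|_{L^\infty(\Omega_h\setminus B_h)}\leq Ch^2$ in the interior, which are exactly the geometric bounds of Elliott--Ranner, \cite{er13}, quantifying how far $G_h$ departs from the identity (the bending sagitta over a boundary element of size $h$ is of order $h^2$, giving an $O(h)$ Jacobian perturbation there, while the interior correction is of higher order $h^2$). Splitting $\int_{\Omega_h}=\int_{B_h}+\int_{\Omega_h\setminus B_h}$, applying Cauchy--Schwarz on each piece, and transporting the norms back to the lifted domain (the Jacobian being uniformly bounded above and below) yields the first line of \eqref{geom}. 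The bulk part of $m$ is identical with $\nabla v_h$ replaced by $v_h$ and $A_h$ by the scalar $J_h-1$, giving the first line of \eqref{geom2}.

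For the surface part I would pull back along the normal-projection lift $\Gamma_h\to\Gamma$. Here the change of surface measure $\mathrm{d}\sigma=\delta_h\,\mathrm{d}\sigma_h$ and the transformation of the tangential gradient combine into a single matrix factor $Q_h$ (respectively the scalar $\delta_h$ for $m$), and the decisive input is Dziuk's classical estimate $\|P-Q_h\|_{L^\infty(\Gamma_h)}\leq Ch^2$ and $\|1-\delta_h\|_{L^\infty(\Gamma_h)}\leq Ch^2$, \cite{d88}, where $P$ is the tangential projection. The point worth stressing is that although the normal error $\|\nu-\nu_h\|_{L^\infty}$ is only of order $h$, the $O(h)$ contributions of the measure and of the gradient transformation cancel, leaving an $O(h^2)$ factor; this is precisely why the surface terms come with $h^2$ over all of $\Gamma$ rather than an $h$-order boundary-layer term. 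A final Cauchy--Schwarz then produces the second summands in \eqref{geom} and \eqref{geom2}.

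The main obstacle is not the concluding Cauchy--Schwarz steps but the careful bookkeeping of the geometric perturbation factors: establishing, or precisely importing, the pointwise $L^\infty$ bounds on $A_h$, $J_h$, $Q_h$ and $\delta_h$, and in particular exhibiting the cancellation of the $O(h)$ terms on the surface. These are the heart of the estimate; once they are in hand, everything else follows routinely from Cauchy--Schwarz together with the equivalence of the lifted and discrete $L^2$-norms guaranteed by the two-sided Jacobian bounds.
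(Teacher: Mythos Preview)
Your sketch is correct and is exactly the standard argument: pull back to the discrete domain, isolate the geometric perturbation matrix (resp.\ scalar), invoke the Elliott--Ranner pointwise bounds in the bulk (with the $O(h)$ vs.\ $O(h^2)$ dichotomy between the boundary layer and the interior) and the Dziuk $O(h^2)$ bounds on the surface, then finish with Cauchy--Schwarz and norm equivalence.

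Note, however, that the paper does not give its own proof of this lemma at all; it simply refers to \cite[Lemma~3.9]{1} (Kov\'acs \& Lubich, 2016). Your outline is precisely the content of that cited proof, so there is nothing further to compare.
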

For the proof see \textit{Kovács \& Lubich (2016)}, \cite[Lemma 3.9]{1}.

Combining both lemmata gives the following second-order estimate, which is the one we will use in the consistency proof
\begin{align}\label{geom_appr_error}
\nonumber|&m(v^l_h,w_h^l)-m_h(v_h,w_h)|\\ \nonumber & \leq ch||v_h^l||_{L^2(B_h^l)}||w_h^l ||_{L^2(B_h^l)} +ch^2(||v_h^l ||_{L^2(\Omega)}||w_h^l ||_{L^2(\Omega)} +||\gamma v_h^l ||_{L^2(\Gamma)}||\gamma w_h^l ||_{L^2(\Gamma)})\\
\nonumber& \leq ch^2||v_h^l||_{H^1(\Omega)}||w_h^l ||_{H^1(\Omega)} +ch^2(||v_h^l ||_{L^2(\Omega)}||w_h^l ||_{L^2(\Omega)} +||\gamma v_h^l ||_{L^2(\Gamma)}||\gamma w_h^l ||_{L^2(\Gamma)})\\
\nonumber& \leq ch^2(||v_h^l||_{H^1(\Omega)}||w_h^l ||_{H^1(\Omega)} +(||v_h^l ||_{L^2(\Omega)} +||\gamma v_h^l ||_{L^2(\Gamma)})(||w_h^l ||_{L^2(\Omega)}+||\gamma w_h^l ||_{L^2(\Gamma)}))\\
\nonumber& \leq ch^2(||v_h^l||_{H^1(\Omega)}||w_h^l ||_{H^1(\Omega)} +| v_h^l |
|w_h^l |)
\\
& \leq ch^2 ||v_h^l||\ ||w_h^l ||.
\end{align}
With the lemma on geometric approximation we will show that the norms $||\cdot||_h$ and $||\cdot||$, $|\cdot|_h$ and $|\cdot|$ are $h$-uniformly equivalent.
\begin{lemma}[Equivalence of norms]\label{equivalence} 
	Let $u_h\in V_h$, then there exist $c_i>0$ for $i=0,\ldots 7$, such that the following estimate holds
	\begin{align}
		 &(1-hc_0) ||u_h^l|| \leq ||u_h^l||_h \leq(1+h c_1) ||u_h^l||\\
		 &(1-hc_2) |u_h^l|_h  \leq |u_h| \leq (1+hc_3) |u_h^l|
	\end{align}
	For $h$ sufficiently small this implies the equivalence of the norms
	\begin{align}
	 &	c_4 ||u_h||_h  \leq ||u_h^l|| \leq c_5 ||u_h||_h\\
	 &	c_6 |u_h|_h  \leq |u_h^l| \leq c_7 |u_h|_h.
	\end{align}
	
\end{lemma}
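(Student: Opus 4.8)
The plan is to prove the equivalence of norms (Lemma~\ref{equivalence}) by directly exploiting the Geometric Approximation Error estimates from Lemma~\ref{lemma_geom}, which compare the discrete forms $m_h, a_h$ with their lifted continuous counterparts. I would handle the $\|\cdot\|_h$-versus-$\|\cdot\|$ comparison first, then treat the $|\cdot|_h$-versus-$|\cdot|$ comparison in exactly the same way, since both follow the identical template.

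For the first pair of inequalities, I would start from the defining identity $\|u_h\|_h^2 = a_h(u_h,u_h)$ and $\|u_h^l\|^2 = a(u_h^l, u_h^l)$ (recalling that on $V$ the norm satisfies $\|v\|^2 = a(v,v)+m(v,v)$, so one must be careful which energy the discrete norm is being compared to; I expect the statement to be read with the appropriate matching bilinear form). Setting $v_h = w_h = u_h$ in estimate~\eqref{geom} gives
\begin{equation*}
|a(u_h^l,u_h^l) - a_h(u_h,u_h)| \leq Ch\|\nabla u_h^l\|_{L^2(B_h^l)}^2 + Ch^2\big(\|\nabla u_h^l\|_{L^2(\Omega)}^2 + \|\nabla_\Gamma u_h^l\|_{L^2(\Gamma)}^2\big).
\end{equation*}
Applying the boundary-layer estimate $\|v\|_{L^2(B_h^l)}\leq Ch^{1/2}\|v\|_{H^1(\Omega)}$ to $v = \nabla u_h^l$ turns the first term into an order-$h^2$ quantity as well, so the whole right-hand side is bounded by $Ch^2\|u_h^l\|^2$ (up to absorbing the $H^1$ and surface gradient norms into the full $V$-norm). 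This yields $|\,\|u_h^l\|^2 - \|u_h\|_h^2\,| \leq Ch^2\|u_h^l\|^2$, and taking square roots via the elementary inequality $|\sqrt{x}-\sqrt{y}|\le \sqrt{|x-y|}$ together with $1\pm Ch^2 \le (1\pm ch)^2$ for small $h$ produces the two-sided bound $(1-hc_0)\|u_h^l\| \leq \|u_h\|_h \leq (1+hc_1)\|u_h^l\|$.

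The second pair is entirely analogous, using estimate~\eqref{geom2} with $v_h = w_h = u_h$ and the same boundary-layer lemma to bound $\|u_h^l\|_{L^2(B_h^l)}^2 \le Ch\|u_h^l\|_{H^1(\Omega)}^2$, giving $|\,|u_h^l|^2 - |u_h|_h^2\,|\leq Ch^2\|u_h^l\|^2$ and hence the stated bound for the $|\cdot|$-norm. Finally, the $h$-uniform equivalences $c_4\|u_h\|_h \leq \|u_h^l\|\leq c_5\|u_h\|_h$ and the corresponding $|\cdot|$-version follow immediately: for $h$ small enough the factors $1-hc_0$ and $1-hc_2$ are bounded below by a positive constant (say $\tfrac12$) and the factors $1+hc_1, 1+hc_3$ are bounded above, so one simply reads off $c_4,\dots,c_7$ from the reciprocals of these coefficients.

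The step I expect to require the most care is the treatment of the boundary-layer term $Ch\|\nabla u_h^l\|_{L^2(B_h^l)}^2$: the half-power of $h$ in the layer estimate is exactly what is needed to upgrade this $O(h)$ term to the $O(h^2)$ rate matching the interior contribution, and one must verify that the relevant $H^1$ and tangential norms appearing after this step can all be absorbed into the single full $V$-norm $\|u_h^l\|$ so that the estimate closes cleanly rather than leaving a stray lower-order norm on the right. The rest is a routine square-root and small-$h$ bookkeeping argument.
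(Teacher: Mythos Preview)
Your overall strategy---set $v_h=w_h=u_h$ in Lemma~\ref{lemma_geom} and read off a bound on the difference of the squared norms---is exactly the paper's approach. However, there is a genuine gap in the step you flag as the most delicate, and it is unnecessary besides.

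You propose to apply the boundary-layer estimate $\|v\|_{L^2(B_h^l)}\le Ch^{1/2}\|v\|_{H^1(\Omega)}$ with $v=\nabla u_h^l$ in order to upgrade the $Ch\|\nabla u_h^l\|_{L^2(B_h^l)}^2$ term to order $h^2$. This is not valid: for piecewise linear finite elements the lifted gradient $\nabla u_h^l$ is only piecewise smooth and does not belong to $H^1(\Omega)$, so the hypothesis of the boundary-layer lemma fails. The paper does not attempt this sharpening at all; it simply uses the trivial inclusion $B_h^l\subset\Omega$ to bound $\|\nabla u_h^l\|_{L^2(B_h^l)}^2\le\|\nabla u_h^l\|_{L^2(\Omega)}^2\le\|u_h^l\|_a^2$, which gives
\[
\big|\,\|u_h^l\|_a^2-\|u_h\|_{a,h}^2\,\big|\le ch\|u_h^l\|_a^2,
\qquad
\big|\,|u_h^l|^2-|u_h|_h^2\,\big|\le ch|u_h^l|^2,
\]
i.e.\ an $O(h)$ estimate rather than $O(h^2)$. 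That is all that is claimed: the factors in the lemma are $(1\pm hc_i)$, not $(1\pm h^2c_i)$. Adding the two displays gives the $\|\cdot\|$-bound, and the square-root/small-$h$ bookkeeping you describe then finishes the proof. So drop the boundary-layer step entirely and keep the rest of your argument.
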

This result was already stated in \textit{Kovács\&Lubich (2016)}, \cite[Equation 3.13]{1}, we give the complete proof here.
\begin{proof}
	The proof is using the lemma on the geometric approximation error and the triangle inequality.
	With Lemma \ref{lemma_geom} and by setting $w_h:=v_h$ we obtain
	\begin{align*}
	\big| ||v^l_h||_a^2-||v_h||_{h,a}^2\big|  \leq ch||\nabla v_h^l||_{L^2(B_h^l)}^2+ch^2(||\nabla v_h^l ||_{L^2(\Omega)}^2 +||\nabla_\Gamma v_h^l ||_{L^2(\Gamma)}^2)\leq ch||v_h^l||_a^2,
	\end{align*}
	and
	\begin{align*}
	\big||v^l_h|^2-|v_h|_{h}^2\big| \leq ch||v_h^l||_{L^2(B_h^l)}^2+ch^2(|| v_h^l ||_{L^2(\Omega)}^2 +||\gamma v_h^l ||_{L^2(\Gamma)}^2) \leq ch|v_h^l|^2.
	\end{align*}
	By switching the signs of the two terms on the left-hand side this implies
	$$ \big||v_h||_{h,a}^2-||v^l_h||_a^2\big|\leq ch|v_h^l|_a^2$$
	and
	$$\big||v^l_h|^2-|v_h|_h^2\big|\leq ch|v_h^l|^2 $$
	Using the triangle inequality we obtain the estimate in the energy norm and the $H$-norm
	$||v_h||_{a,h}^2\geq (1-ch)||v_h^l||^2_{a}$ and $||v_h||_{a,h}^2\leq (1+ch)||v_h^l||^2_{a},$
$|v_h|_h^2\geq (1-ch)|v_h^l|^2$ and $|v_h|_h^2\leq (1+c)|v_h^l|^2.$\\
Adding up the inequalities which are upon each other gives the estimate in the $V$-norm
	$$||v_h||_h^2\geq (1-ch)||v_h^l||^2$$ and $$||v_h||_h^2\leq (1+ch)||v_h^l||^2.$$
\end{proof}
\subsection{The Ritz map}\label{sec_ritz}

The Ritz map is an almost a-orthogonal map onto the finite element space. We define it here as in \textit{Kovács\&Lubich (2016)}, \cite[Section 3.4]{1}.

The Ritz map $R_h:V\rightarrow V_h^l$ is defined in two steps:
We determine a solution of the following equation
$$a_h(\tilde{R}_hu, \phi_h) = a(u, \phi_h^l). $$
The \emph{Ritz map} $\tilde{R}_hu:V\to V_h$ is then lifted to obtain the final Ritz map $$R_hu:=(\tilde{R}_h u)^l\in V_h^l.$$
The error made by the Ritz map is bound as follows
\begin{lemma} Let $u\in  H^2(\Omega)$ such that $\gamma u \in H^2(\Gamma)$. The error of the Ritz map in the energy norm satisfies the following first-order bound
	\begin{equation}\label{ritz1}
	||R_h u - u||_a^2 \leq C h^2 ||u||_2^2.
	\end{equation}
The error of the Ritz map in the $|\cdot |$-norm is given by
		\begin{equation}\label{ritz2}
		|R_h u - u|^2 \leq C h^4 ||u||_2^2.
		\end{equation}
\end{lemma}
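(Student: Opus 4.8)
The plan is to prove both bounds by comparing the Ritz image with the finite element interpolant and carefully accounting for the geometric (variational crime) errors, since $V_h\nsubseteq V$ forbids a naive Galerkin orthogonality.

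\textbf{Energy norm bound \eqref{ritz1}.} First I would split by the triangle inequality,
$$\|R_h u - u\|_a \leq \|R_h u - I_h u\|_a + \|I_h u - u\|_a,$$
where $I_h u = (\tilde{I}_h u)^l$. The second term is handled directly by the interpolation estimate \eqref{interpolation_error2} together with the elementary inequality $\|\cdot\|_a \leq \|\cdot\|$ (immediate from $\|v\|^2 = a(v,v)+m(v,v)$), giving $\|I_h u - u\|_a \leq Ch\|u\|_2$. For the first term I would set $\chi_h := \tilde{R}_h u - \tilde{I}_h u \in V_h$ and test the defining relation $a_h(\tilde{R}_h u, \chi_h) = a(u, \chi_h^l)$ against $\chi_h$, writing
$$a_h(\chi_h,\chi_h) = a(u - I_h u, \chi_h^l) + \big(a(I_h u, \chi_h^l) - a_h(\tilde{I}_h u, \chi_h)\big).$$
The first summand is bounded by continuity of $a$ and \eqref{interpolation_error2}, while the second is precisely a geometric approximation error controlled by Lemma \ref{lemma_geom}. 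Using the norm equivalence Lemma \ref{equivalence} to pass between $a_h(\chi_h,\chi_h)$ and $\|\chi_h^l\|_a^2$ and then dividing by $\|\chi_h^l\|_a$, one obtains $\|R_h u - I_h u\|_a \leq Ch\|u\|_2$, and hence \eqref{ritz1}.

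\textbf{$L^2$-type bound \eqref{ritz2}.} For the squared $|\cdot|$ estimate I would run an Aubin--Nitsche duality argument. Writing $e := R_h u - u$, introduce the auxiliary problem of finding $z \in V$ with $a^{\ast}(z, v) = m(e, v)$ for all $v \in V$, where $a^{\ast} = a + m$ is the coercive shifted form from Section \ref{abstract} (coercivity and $V$-regularity following from Lemma \ref{equi}); elliptic regularity supplies $\|z\|_2 \leq C|e|$. I would then express $|e|^2 = m(e,e) = a^{\ast}(z,e)$, subtract the crime-modified Galerkin relation satisfied by the Ritz map, and insert the interpolant $I_h z$. This decomposes $|e|^2$ into an interpolation contribution — estimated by \eqref{interpolation_error}, \eqref{interpolation_error2}, the energy bound \eqref{ritz1} just proved, and the regularity bound on $z$, each factor carrying one power of $h$ so the product is $O(h^2|e|\,\|u\|_2)$ — together with several consistency terms of the form $a(\cdot,\cdot)-a_h(\cdot,\cdot)$ and $m(\cdot,\cdot)-m_h(\cdot,\cdot)$.

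\textbf{Main obstacle.} The delicate part is the duality step, where every variational-crime term must be shown to be $O(h^2|e|\,\|u\|_2)$. Here the combined geometric estimate \eqref{geom_appr_error} and Lemma \ref{lemma_geom} are essential, and one must take care that the boundary-layer gradient contributions over $B_h^l$, which by themselves only yield a factor $h$, are paired with a factor carrying the extra $h^{1/2}$ (from the $L^2(B_h^l)$ lemma applied to an $H^1$ interpolation/Ritz factor) so that each product correctly lands at $O(h^2)$. Summing all contributions and cancelling one power of $|e|$ gives $|e| \leq Ch^2\|u\|_2$, which is exactly \eqref{ritz2}.
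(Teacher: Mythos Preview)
The paper does not give its own proof of this lemma; it simply cites Kov\'acs \& Lubich (2016), Lemmas~3.11 and~3.13. Your plan---a C\'ea-type argument with geometric-perturbation corrections for \eqref{ritz1}, followed by an Aubin--Nitsche duality for \eqref{ritz2}---is precisely the standard route carried out in that reference, so in substance there is nothing to compare.

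One genuine point of care in your duality step, though. As the Ritz map is written in this thesis, it is defined through the \emph{semi}-definite form $a$ alone, $a_h(\tilde R_h u,\phi_h)=a(u,\phi_h^l)$, so the only (approximate) Galerkin orthogonality at your disposal lives in $a$, not in $a^\ast$. You set up the dual problem in $a^\ast=a+m$, write $|e|^2=a^\ast(z,e)$, and then plan to ``subtract the crime-modified Galerkin relation''; but that relation kills only the $a$-part, and you are left with a contribution of the type $m(I_h z,e)$ which has no obvious $O(h^2)$ bound---and no ``$m(\cdot,\cdot)-m_h(\cdot,\cdot)$'' residual ever arises, since $m_h$ does not enter the Ritz definition here. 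In the cited reference the Ritz map is taken with respect to the coercive form $a^\ast$, and it is exactly this that generates both the $a$- and the $m$-type variational-crime terms you allude to and lets the duality close at order $h^2$. If you follow the $a$-only definition literally as written in the thesis, you will need a separate device to dispose of the mass contribution (or, more simply, redefine the Ritz map via $a^\ast$ and note that the energy estimate \eqref{ritz1} is unchanged up to harmless lower-order terms).
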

For the proof we refer to \textit{Kovács\& Lubich (2016)}, Lemma 3.11 and Lemma 3.13.

Adding the inequalities (\ref{ritz1}) and (\ref{ritz2}) yields an error bound for the Ritz error in the $V$-norm
\begin{equation}\label{ritz_v}
||R_hu-u||^2\leq C h^2 ||u||_2^2.
\end{equation}
\section{Consistency}\label{cons}
In this section we will use the preparational lemmata to show a second-order error bound for the defects and their derivatives.

\begin{proposition}\label{pr_consist}
	Let $(u,w)$ be a solution of (\ref{Cahn_H_bulk})--(\ref{Cahn_H_suface}) with inhomogeneity $f$ such that $(u,w)$ and $f$ satisfy the regularity assumptions of Theorem \ref{convergence}. We define the perturbed solution $u_h^\ast:=R_hu$, $w_h^\ast:= R_hw$ using the Ritz map.
	Then there exist defects $d_h^u, d_h^w\in C^1([0,t];V_h)$ such that 
	$(u_h^\ast,w_h^\ast)$ is a solution of the system (\ref{per_semi}).
	These defects satisfy
	\begin{align}
		||d_h^u(t)||_{\ast,h}^2 &\leq c_0 h^4  \qquad \text{and} \qquad||\pa_t d_h^u(t)||_{\ast,h}^2 \leq c_1 h^4, \\
		||d_h^w(t)||_{\ast,h}^2 &\leq c_2 h^4 \qquad \text{and} \qquad ||\pa_t d_h^w(t)||_{\ast,h}^2 \leq c_3 h^4 ,		
	\end{align}
	with $c_0= c||\pa_t u||_2^2, c_1= c||\pa_{tt} u||_2^2, c_2=c(||w||_2^2+||f||_2^2), c_3=c(||\pa_t w||_2^2+||\pa_t f||_2^2) $ and $c>0$.
\end{proposition}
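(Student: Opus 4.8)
The plan is to define the defects as the residuals that arise when the Ritz maps are inserted into the semidiscrete equations \eqref{per_semi}, and then to estimate each residual against an arbitrary $\phi_h\in V_h$ by passing to its lift $\phi_h^l\in V_h^l\subseteq V$ and comparing with the continuous weak formulation \eqref{eq:abstract_weak}. Writing $u_h^\ast=\tilde R_h u$ and $w_h^\ast=\tilde R_h w$ (so that both lie in $V_h$, with lifts $R_h u$, $R_h w$), I would define $d_h^u,d_h^w\in V_h$ by
\begin{align*}
m_h(d_h^u,\phi_h)&:=m_h(\pa_t\tilde R_h u,\phi_h)+a_h(\tilde R_h w,\phi_h),\\
m_h(d_h^w,\phi_h)&:=m_h(\tilde R_h w,\phi_h)-a_h(\tilde R_h u,\phi_h)-m_h(\tilde I_h f,\phi_h),
\end{align*}
for all $\phi_h\in V_h$. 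Since $m_h$ is an inner product on the finite-dimensional space $V_h$, each right-hand side determines $d_h^u,d_h^w$ uniquely, and the regularity of $(u,w)$ and $f$ assumed in Theorem \ref{convergence} makes them $C^1$ in time; by construction $(u_h^\ast,w_h^\ast)$ solves \eqref{per_semi}.

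For $d_h^u$ I would first use the \emph{exact} defining property of the Ritz map, $a_h(\tilde R_h w,\phi_h)=a(w,\phi_h^l)$, so that the $a_h$-term carries no error at all. The inner-product term I would rewrite, using $\pa_t\tilde R_h u=\tilde R_h\pa_t u$ (the Ritz map is linear and time-independent), as
\begin{equation*}
m_h(\pa_t\tilde R_h u,\phi_h)=m(R_h\pa_t u,\phi_h^l)+\big[m_h(\tilde R_h\pa_t u,\phi_h)-m(R_h\pa_t u,\phi_h^l)\big].
\end{equation*}
The bracketed difference is a geometric approximation error, bounded by \eqref{geom_appr_error} as $ch^2||R_h\pa_t u||\,||\phi_h^l||\le ch^2||\pa_t u||_2\,||\phi_h^l||$, where the $V$-stability $||R_h\pa_t u||\le C||\pa_t u||_2$ follows from \eqref{ritz_v} and the triangle inequality for $h\le h_0$. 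Splitting $m(R_h\pa_t u,\phi_h^l)=m(\pa_t u,\phi_h^l)+m(R_h\pa_t u-\pa_t u,\phi_h^l)$ and estimating the last term by the $|\cdot|$-Ritz bound \eqref{ritz2}, $\le Ch^2||\pa_t u||_2\,|\phi_h^l|$, I am left with $m(\pa_t u,\phi_h^l)+a(w,\phi_h^l)$, which vanishes by the first equation of \eqref{eq:abstract_weak} tested with $\phi_h^l\in V$. Hence $m_h(d_h^u,\phi_h)$ is a sum of terms of size $O(h^2)||\phi_h^l||$; dividing by $||\phi_h||_h$ and invoking the $h$-uniform norm equivalence of Lemma \ref{equivalence} gives $||d_h^u||_{\ast,h}\le ch^2||\pa_t u||_2$, i.e. the claimed $c_0h^4$ after squaring.

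The bound for $d_h^w$ follows the same scheme: the $a_h$-term cancels exactly through $a_h(\tilde R_h u,\phi_h)=a(u,\phi_h^l)$; the term $m_h(\tilde R_h w,\phi_h)$ reduces to $m(w,\phi_h^l)$ via the geometric error \eqref{geom_appr_error} and the Ritz error \eqref{ritz2}; and the interpolation term $m_h(\tilde I_h f,\phi_h)$ reduces to $m(f,\phi_h^l)$ via \eqref{geom_appr_error} together with the $L^2$-interpolation bound \eqref{interpolation_error}, $|I_hf-f|\le Ch^2||f||_2$. The surviving main part $m(w,\phi_h^l)-a(u,\phi_h^l)-m(f,\phi_h^l)$ vanishes by the second equation of \eqref{eq:abstract_weak}, leaving only terms of size $O(h^2)(||w||_2+||f||_2)||\phi_h^l||$ and hence the $c_2h^4$ bound. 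The estimates for $\pa_t d_h^u$ and $\pa_t d_h^w$ are obtained by differentiating the defining relations in time \emph{before} estimating, using that $\tilde R_h$ and the nodal interpolation $\tilde I_h$ both commute with $\pa_t$, so that $u,w,f$ are simply replaced by $\pa_t u,\pa_t w,\pa_t f$ and the two arguments above apply verbatim, yielding $c_1h^4$ and $c_3h^4$.

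I expect the main obstacle to be the bookkeeping between lifted and unlifted quantities and, in particular, matching each residual term with the correct estimate: only the $m_h$-terms and the interpolation term incur geometric errors via \eqref{geom_appr_error}, whereas every $a_h$-term cancels \emph{exactly} against the continuous $a$-form by the definition of the Ritz map. The one auxiliary fact that needs the stated regularity is the $V$-stability bound $||R_h v||\le C||v||_2$ (and the analogous $||I_h f||\le C||f||_2$), used to control the $v_h^l$-factor in \eqref{geom_appr_error}; a small point to confirm is that the dual-norm supremum \eqref{dual_norm} is taken over $V_h$, so that Lemma \ref{equivalence} legitimately converts $||\phi_h^l||$ into $||\phi_h||_h$.
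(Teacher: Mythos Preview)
Your proposal is correct and follows essentially the same approach as the paper: define the defects as the residuals of \eqref{per_semi} with the Ritz maps inserted, kill the $a_h$-terms \emph{exactly} via the defining property of the Ritz map, reduce the remaining $m_h$-terms to their continuous counterparts modulo the geometric approximation error \eqref{geom_appr_error} plus the $|\cdot|$-Ritz error \eqref{ritz2} (respectively the interpolation error \eqref{interpolation_error} for the $f$-term), and let the surviving continuous parts cancel through \eqref{eq:abstract_weak}. The only cosmetic difference is that the paper first writes the defect via the error equation \eqref{error_semi} (introducing and then immediately cancelling $u_h,w_h$ through \eqref{weak_semi}), whereas you go directly from the definition \eqref{per_semi}; your route is slightly cleaner but lands on the identical formula \eqref{def_1}/\eqref{def_2} and the identical estimates.
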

\begin{proof}
	We start with the existence of the defects  $d_h^u, d_h^w\in C^1([0,t];V_h)$.
	We briefly fix the time $t$.
	The map 
$$T^u:V_h\to\R,\qquad\phi_h^u\mapsto m_h(\partial_t u_h^\ast,\phi_h^u)+a_h(w_h^\ast,\phi_h^u)$$
is a linear and continuous functional on $V_h$.
	Since $V_h=H_h$, this is also a linear and continuous functional on $H_h$. 
	Since $m_h$ is an inner product on the Hilbert space $H_h$,
	we can apply the Riesz representation theorem.
	Thus there exists an element $d_h^u(t)\in H_h=V_h$
	such that $T^u(\phi_h^u)=m_h(d_h^u(t),\phi_h^u)$ holds for all $\phi_h^u\in V_h=H_h$.
	Thus there exists $d_h^{u}\in V_h$ that satisfies (\ref{per_semi_a}). By defining $$T^w:V_h\to\R,\qquad\phi_h^u\mapsto m_h( w_h^\ast,\phi_h^u)-a_h(u_h^\ast,\phi_h^u)-m_h( \tilde{I}_h f,\phi_h^u),$$ we obtain existence of $d_h^w\in V_h$ with the same arguments.
		Since $w_h^\ast$ and $\tilde I_hf$ are continuously differentiable in time, $u_h^\ast$ is twice continuously differentiable in time,
		it follows from (\ref{per_semi}) that $d_h^u$ and $d_h^w$ are differentiable in time.
		
The rest of this proof consists of two main parts, (i) for estimating the defect in $u$, $d_h^u$, (Part a) and its derivative (Part b) and (ii) for estimating the defect in $w$, $d_h^w$, (Part a) and its derivative (Part b). We decompose the defects in a way that we can estimate them by using the lemmata from the previous section.

In this proof we use the generic constant $c>0$ and mention that it is independent from $h$ for this proof and all following appearances.

\textit{(i) Part a: Defect in $u$.}\\
For the defect in $u$, $d_h^u$, we use the equation (\ref{error_semi}a) and obtain, for $\phi_h^u\in V_h$
\begin{eqnarray*}
	m_h(d_h^u,\phi_h^u) &=& m_h(\partial_t e_h^u, \phi^u_h)+a_h(e_h^w,\phi^u_h)\\
	&=&m_h(\tilde{R}_h\partial_t u-\partial_t u_h,\phi_h^u)+a_h(\tilde{R}_hw-w_h,\phi_h^u)\\
	&=& m_h(\tilde{R}_h\partial_t u,\phi_h^u)-m_h(\partial_t u_h,\phi_h^u)+a_h(\tilde{R}_hw,\phi_h^u)-a_h(w_h,\phi_h^u).
\end{eqnarray*}	
		Using the definition of the Ritz map then yields
		\begin{eqnarray*}
		m_h(d_h^u,\phi_h^u) 
		&=& m_h(\tilde{R}_h\partial_t u,\phi_h^u)-m_h(\partial_t u_h,\phi_h^u)+a(w,(\phi_h^u)^l)-a_h(w_h,\phi_h^u).
	\end{eqnarray*}
 Using the equation (\ref{weak_semi}a) we obtain
		\begin{eqnarray*}
			m_h(d_h^u,\phi_h^u)
			&=& m_h(\tilde{R}_h\partial_t u_,\phi_h^u)+a(w,(\phi_h^u)^l).
		\end{eqnarray*}
			Again we insert an equation from the weak formulation for the last term on the right-hand side, but now the continuous variant  (\ref{eq:abstract_weak}a), which yields
			\begin{eqnarray}\label{def_1}
				m_h(d_h^u,\phi_h^u) 
				&=& m_h(\tilde{R}_h\partial_t u,\phi_h^u)-m(\partial_t u,(\phi_h^u)^l).
			\end{eqnarray}
	We add and subtract $m((\tilde{R}_h \partial_t u)^l-R_h \partial_tu,(\phi_h^u)^l))$, then  for the defect in $u$ we obtain
		\begin{equation}\label{defect_1}
		m_h(d_h^u,\phi_h^u)= (m_h(\tilde{R}_h\partial_t u,\phi_h^u)-m((\tilde{R}_h \partial_t u)^l,(\phi_h^u)^l))+m(R_h \partial_tu-\partial_t u,(\phi_h^u)^l).
		\end{equation}
		We will start estimating the right-hand side.
	Using the Cauchy--Schwarz inequality we obtain
	\begin{align*}
	m_h(d_h^u,\phi_h^u)\leq \Big| m_h(\tilde{R}_h\partial_t u,\phi_h^u)-m((\tilde{R}_h \partial_t u)^l,(\phi_h^u)^l)\Big|+|R_h \partial_tu-\partial_t u|\ |(\phi_h^u)^l|.
	\end{align*}
	The geometric approximation error (\ref{geom_appr_error}) and the error of the Ritz map  (\ref{ritz2}) then yield
	\begin{align*}
	m_h(d_h^u,\phi_h^u)\leq ch^2||(\tilde{R}_h \partial_t u)^l || \ ||(\phi_h^u)^l||+ch^2||\partial_tu||_2\ |(\phi_h^u)^l|.
	\end{align*}
	We use that $|\phi|\leq ||\phi||$, and together with the $h$-uniformly equivalence of the norms $||\cdot||_h$ and $||\cdot ||$, for sufficiently small $h$, it is obtained that
	\begin{align*}
	m_h(d_h^u,\phi_h^u)\leq ch^2||{R}_h \partial_t u || \ ||\phi_h^u||_h+ch^2||\partial_tu||_2\ ||\phi_h^u||_h.
	\end{align*}
	By the definition of the dual norm \eqref{dual_norm} we estimate the defect in the dual norm in the following:
	\begin{align*}
	||d_h^u||_{\ast,h} &\leq  \sup_{\phi_h^u \in V_h\setminus \lbrace 0\rbrace}\frac{ch^2||{R}_h \partial_t u || \ ||\phi_h^u||_h+ch^2||\partial_tu||_2\ ||\phi_h^u||_h}{||\phi_h^u||_h}\\
	& = ch^2 (||R_h\pa_t u|| + ||\pa_t u||_2).
	\end{align*}
To  eliminate $R_h$ on the right hand side we add and subtract $\partial_t u$ and use the error of the Ritz map (\ref{ritz_v}), which then yields, for sufficiently small $h$
	\begin{align*}
	||d_h^u||_{\ast,h}
	& \leq c h^2 (|| R_h\partial_t u-\partial_t u|| +||\partial_t u||+||\partial_t u||_2)\\
	& \leq c h^2 (h ||\partial_t u||_2 +||\partial_t u||+||\partial_t u||_2)\\
	& \leq ch^2 ||\partial_t u||_2.
	\end{align*}
\textit{Part b: Derivative of the defect in u.}\\
	For estimating the derivative of the defect we take the equation for the defect from (\ref{def_1}), differentiate the equation with respect to time, use that $\pa_t(R_hu)=R_h(\pa_t u)$ and add and subtract the term $m((\tilde{R}_h \partial_{tt} u)^l,(\phi_h^u)^l))$, which yields
	\begin{equation}\label{der_def_1}
	m_h(\partial_t d_h^u,\phi_h^u) = (m_h(\tilde{R}_h\partial_{tt} u,\phi_h^u)-m((\tilde{R}_h \partial_{tt} u)^l,(\phi_h^u)^l))-m(\partial_{tt} u-R_h \partial_{tt}u,(\phi_h^u)^l).
	\end{equation}
	A similar proof as above then yields
	\begin{align*}
	||\pa_t d_h^u||_{\ast,h}
	& \leq ch^2 ||\partial_{tt} u||_2.
	\end{align*}
	\textit{(ii)Part a: Defect in $w$.}
	
We consider the defect in $w$. Let $d_h^w$ be the defect in $w$ of the equation system (\ref{error_semi}). We use the equation (\ref{error_semi}b) and obtain
\begin{align*}
	m_h(d_h^w,\phi_h^w)&= m_h(e^w_h,\phi_h^w)-a_h(e^u_h,\phi_h^w)\\
	&= m_h(\tilde{R}_hw-w_h,\phi_h^w)-a_h(\tilde{R}_hu-u_h,\phi_h^w)\\
	&= m_h(\tilde{R}_hw,\phi_h^w)-m_h(w_h,\phi_h^w)-a_h(\tilde{R}_hu,\phi_h^w)+a_h(u_h,\phi_h^w).
	\end{align*}
	The definition of the Ritz map yields
	\begin{align*}
	m_h(d_h^w,\phi_h^w)
	&= m_h(\tilde{R}_hw,\phi_h^w)-m_h(w_h,\phi_h^w)-a(u,(\phi_h^w)^l)+a_h(u_h,\phi_h^w).
	\end{align*}
	The last term is exchanged using the equation (\ref{weak_semi}b)
	of the semidiscrete weak formulation and afterwards the second to last term by using the continuous weak formulation. We obtain
		\begin{align}\label{def_2}
		m_h(d_h^w,\phi_h^w)
		&= m_h(\tilde{R}_hw,\phi_h^w)-m_h(w_h,\phi_h^w)-a(u,(\phi_h^w)^l)+a_h(u_h,\phi_h^w)\nonumber\\
		&=m_h(\tilde{R}_hw,\phi_h^w)-m_h(w_h,\phi_h^w)-a(u,(\phi_h^w)^l)-m_h(\tilde{I}_hf,\phi_h^w)+m_h(w_h,\phi_h^w) \nonumber\\	
		&= m_h(\tilde{R}_hw,\phi_h^w)-a(u,(\phi_h^w)^l)-m_h(\tilde{I}_hf,\phi_h^w)\nonumber\\
		&= m_h(\tilde{R}_hw,\phi_h^w)-m(w,(\phi_h^w)^l)+m(f,(\phi_h^w)^l)-m_h(\tilde{I}_hf,\phi_h^w).
		\end{align}
		We add and subtract $m((\tilde{R}_hw)^l,(\phi_h^w)^l) $ and $m(I_hf,(\phi_h^w)^l) $ to finally obtain the following equation for the defect in $w$
		\begin{align}\label{defect_2}
		\nonumber m_h(d_h^w,\phi_h^w) &= (m_h(\tilde{R}_hw,\phi_h^w)-m((\tilde{R}_hw)^l,(\phi_h^w)^l))+m(R_hw-w,(\phi_h^w)^l)\\ & \quad+(m(f-I_hf,(\phi_h^w)^l))+(m(I_hf,(\phi_h^w)^l)-m_h(\tilde{I}_hf,\phi_h^w)).
		\end{align}
	We will estimate the left-hand side of the above equation.
	We apply the Cauchy-Schwarz inequality to the second and third term, which yields
	\begin{align*}
	\nonumber m_h(d_h^w,\phi_h^w) &\leq (m_h(\tilde{R}_hw,\phi_h^w)-m((\tilde{R}_hw)^l,\phi_h^w))+|R_hw-w|\ |(\phi_h^w)^l|\\ & \quad+|f-I_hf|\ |(\phi_h^w)^l)|+(m(I_hf,(\phi_h^w)^l)-m_h(\tilde{I}_hf,\phi_h^w)).
	\end{align*}
	For the first and last term we use the geometric approximation error (\ref{geom_appr_error}), for the second the error of the Ritz map \ref{ritz2}, and for the third term the interpolation error (\ref{interpolation_error}), which altogether yield
	\begin{align}
		\nonumber m_h(d_h^w,\phi_h^w) &\leq ch^2(||(\tilde{R}_hw)^l||\ ||(\phi_h^w)^l||+||w||_2\ |(\phi_h^w)^l|+||f||_2|(\phi_h^w)^l)|+||{I}_hf||\ ||(\phi_h^w)^l||).
		\end{align}
	We use that $|\phi|\leq ||\phi||$. With the $h$-uniformly equivalence of the norms $||\cdot||_h$ and $||\cdot ||$, for sufficiently small $h$, this yields 
	\begin{align}
	\nonumber m_h(d_h^w,\phi_h^w) &\leq ch^2||\phi_h^w||_h(||{R}_hw||\ +||w||_2\ +||f||_2+||{I}_hf||).
	\end{align}	
	We estimate the defect in the dual norm:
	\begin{align*}
	||d_h^w||_{\ast,h} &= \sup_{\phi_h^w \in V_h\setminus \lbrace 0 \rbrace}\frac{m_h(d_h^w,\phi_h^u)}{||\phi_h^w||_h}\\
	&\leq \sup_{\phi_h^w \in V_h\setminus \lbrace 0 \rbrace}\frac{  ch^2||\phi_h^w||_h(||{R}_hw||\ +||w||_2\ +||f||_2+||{I}_hf||)}{||\phi_h^w||_h}\\
	&\leq ch^2(||{R}_hw||\ +||w||_2\ +||f||_2+||{I}_hf||)
	\end{align*}
	To eliminate $R_hw$ we add and subtract $w$ and use the error bound for the Ritz map (\ref{ritz_v}). To eliminate $I_h f$ we add and subtract $f$ and  use the estimate for the interpolation error in the $||\cdot ||$-norm with (\ref{interpolation_error2}). We obtain
	\begin{align*}
	||d_h^w||_{\ast,h}
	& \leq ch^2(| |R_hw-w||+||w||+||w||_2+||f||_2+||I_hf-f||+||f|| )\\
	& \leq ch^2(h||w||_2+||w||+||w||_2+||f||_2+h||f||_2+||f||  )\\
	& \leq ch^2(||w||_2+||f||_2)
	\end{align*}

	\textit{Part b: Derivative of the defect in $w$.}
	
	We consider the equation for the defect, $d_h^w$, (\ref{def_2}) and differentiate it with respect to $t$ to obtain
	\begin{align*}
	m_h(d_h^w,\phi_h^w)
	&= m_h(\tilde{R}_h\pa_tw,\phi_h^w)-m(\pa_tw,(\phi_h^w)^l)+m(\pa_tf,(\phi_h^w)^l)-m_h(\tilde{I}_h\pa_tf,\phi_h^w).
	\end{align*}
We add and subtract the terms $ m((\tilde{R}_h\partial_tw)^l,(\phi_h^w)^l)$ and $ m(I_h\partial_tf,(\phi_h^w)^l)$, which then it yields
	\begin{align*}
	m_h(\partial_t d_h^w,\phi_h^w) 
	&= (m_h(\tilde{R}_h\partial_tw,\phi_h^w)-m((\tilde{R}_h\partial_tw)^l,(\phi_h^w)^l)+m(R_h\partial_tw-\partial_tw,(\phi_h^w)^l)\\ & \quad+(m(\partial_tf,(\phi_h^w)^l)-m(I_h\partial_tf,(\phi_h^w)^l))+(m(I_h\partial_tf,(\phi_h^w)^l)-m_h(\tilde{I}_h\partial_tf,\phi_h^w))
	\end{align*}
	With that equation and by the steps as for the defect in $w$, we arrive at
	\begin{align*}
	||\pa_t d_h^w(t)||_{\ast,h}
	& \leq ch^2(||\pa_tw||_2+||\pa_tf||_2).
	\end{align*}
	\end{proof}
\section{Convergence}\label{conv}
	This section is only devoted to the proof of Theorem \ref{convergence}. The previously proved statements regarding stability, consistency and control over the Ritz error are now combined to finally give the proof of the theorem.
\begin{proof}[Proof of Theorem \ref{convergence}]

	The proof consists of three parts.
	First the errors  are split up into two parts \textit{(i)}, one part is then bounded by the Ritz map error \textit{(ii)}, on the other part the perturbation result applies, which is then bounded using the consistency statement \textit{(iii)}.\\
	
	\textit{(i) Decomposition of the error}	
	
The errors $u_h^l-u$, $w_h^l-w$ and $\partial_t u_h^l-\partial_t u$ are rewritten as follows
$$u_h^l-u = (u_h-\tilde{R}_h u)^l+(R_hu-u),$$
$$w_h^l-w = (w_h-\tilde{R}_h w)^l+(R_hw-w),$$
$$\partial_t u_h^l-\partial_t u = (\partial_t u_h-\tilde{R}_h \partial_t u)^l+(R_h \partial_t u-\partial_t u).$$

\textit{(ii) Estimate of the Ritz error terms}	

The second terms $R_hu-u, R_hw-w $ and $R_h \partial_t u-\partial_t u $ are estimated with (\ref{ritz_v}), which results in the  following first-order bound
\begin{align*}
||&R_h u(t)-u(t)||^2+||R_h w(t)-w(t)||^2 \\ & +\int_0^t||R_h \partial_tu(s)-\partial_t u(s)||^2\mathrm{d}s+\int_0^t||R_h w(s)-w(s)||^2\mathrm{d}s \\ & \leq c h^2 \Big(||u(t)||_2^2+||w(t)||_2^2+\int_0^t||\partial_t u(s)||_2^2 \mathrm{d}s  +\int_0^t ||w(s)||_2^2\mathrm{d}s \Big)
\end{align*}
and for the $|\cdot |$-norm with  (\ref*{ritz2}) in the second-order error bound
\begin{align*}
|&R_h u(t)-u(t)|^2+|R_h w(t)-w(t)|^2 \\ & +\int_0^t|R_h \partial_tu(s)-\partial_t u(s)|^2\mathrm{d}s+\int_0^t|R_h w(s)-w(s)|^2\mathrm{d}s \\ & \leq c h^4 \Big(|u(t)|_2^2+|w(t)|_2^2+\int_0^t|\partial_t u(s)|_2^2 \mathrm{d}s  +\int_0^t |w(s)|_2^2\mathrm{d}s \Big).
\end{align*}

\textit{(ii) Estimate of the first error terms}

To estimate the first terms we define the errors $e_h^u:=\tilde{R}_hu-u_h$ and $ e_h^w:=\tilde{R}_hw-w_h$.\\
The assumption of $e_h^u(0)=0$ and $e_h^w(0)=0$ implies $d_h^u(0)=0$ and $d_h^w(0)=0$. Applying Proposition \ref{energy_est_semidiscr} we obtain the following stability estimate for these errors
\begin{align*}
||&e^u_h(t)||_h^2+||e^w_h(t)||_h^2+\int_0^t||\partial_te^u_h(s)||_h^2\mathrm{d}s+\int_0^t||e^w_h(s)||_h^2\mathrm{d}s 
 \\  & 
\leq c \Big( ||d^u_h(t)||_{\ast, h}^2+\int_0^t||d^u_h(s)||_{\ast, h}^2 \mathrm{d}s+\int_0^t||\partial_td^u_h(s)||_{\ast, h}^2\mathrm{d}s\\ 
& \qquad +\int_0^t||d^w_h(s)||_{\ast, h}^2\mathrm{d}s+\int_0^t||\partial_td^w_h(s)||_{\ast, h}^2\mathrm{d}s\Big)
\end{align*}
with $c$  depending only on $T$ .\\
Since we chose $u_h^\ast=\tilde{R}_hu$ and $w_h^\ast=\tilde{R}_hw$, the consistency result, Proposition \ref{pr_consist}, applies and we obtain
\begin{align*}
||&e^u_h(t)||_h^2+||e^w_h(t)||_h^2+\int_0^t||\partial_te^u_h(s)||_h^2\mathrm{d}s+\int_0^t||e^w_h(s)||_h^2\mathrm{d}s \\  & 
 \leq ch^4  \Big(||\partial_t u (0)|| ^2+ ||\partial_t u (t)|| ^2 + \int_0^t ||\partial_t u(s)||_2^2 \mathrm{d}s + \int_0^t ||\partial_{tt} u(s)||_2^2 \mathrm{d}s\\
& \quad + \int_0^t ||w(s)||_2^2\mathrm{d}s+\int_0^t ||\partial_t w(s)||_2^2\mathrm{d}s+\int_0^t||f(s)||_2 ^2 \mathrm{d}s+\int_0^t||\partial_t f(s)||_2 ^2 \mathrm{d}s \Big).
\end{align*}
Because it is $|\cdot |\leq||\cdot||$, we obtain as well
\begin{align*}
|&e^u_h(t)|_h^2+|e^w_h(t)|_h^2+\int_0^t|\partial_te^u_h(s)|_h^2\mathrm{d}s+\int_0^t|e^w_h(s)|_h^2\mathrm{d}s \\  & 
\leq ch^4  \Big(||\partial_t u (0)|| ^2+ ||\partial_t u (t)|| ^2 + \int_0^t ||\partial_t u(s)||_2^2 \mathrm{d}s + \int_0^t ||\partial_{tt} u(s)||_2^2 \mathrm{d}s\\
& \quad + \int_0^t ||w(s)||_2^2\mathrm{d}s+\int_0^t ||\partial_t w(s)||_2^2\mathrm{d}s+\int_0^t||f(s)||_2 ^2 \mathrm{d}s+\int_0^t||\partial_t f(s)||_2 ^2 \mathrm{d}s \Big).
\end{align*}
This inequalities, the equivalence of discrete and continuous norms and the bounds for the Ritz error yield	the statements of Theorem \ref{convergence}. 
	\end{proof}
\chapter{Nonlinear case}
The Cahn--Hilliard equation with dynamic Cahn--Hilliard boundary conditions as presented in the introduction included potential functions, which are dependent on $u$. To approach to this situation we will consider the Cahn--Hilliard/Cahn--Hilliard coupling with nonlinearity $f(u)$ in this chapter. We will restrict our function $f$ to have Lipschitz continuity. The formulations and the analysis made before are extended to this more general case. The main part of this chapter will be the proof of a perturbation estimate based on the perturbation result for the linear case. Combined with consistency estimates,  we will then have a convergence theorem with a first- and second-order bound. As in Chapter 1 and 2 the perturbation result proven in this chapter applies not only on the Cahn--Hilliard/Cahn--Hilliard coupling, but to a class of problems that fits into the presented abstract framework, Section \ref{abstract}.

The nonlinear Cahn--Hilliard equation in its second-order formulation is given by
\begin{subequations}\label{non_linear_cahn_bulk}
\begin{align}
\partial_t u &= \Delta w \qquad &\text{in}\  \Omega\\
w &=  -\Delta u + f_\Omega(u) \qquad &\text{in}\  \Omega,
\end{align}
\end{subequations}
with dynamic boundary conditions
\begin{subequations}\label{non_linear_cahn_surface}
\begin{align}
\partial_t u &= \Delta_\Gamma w - \partial_\nu w \qquad &\text{on}\  \Gamma\\
w &=  -\Delta_\Gamma u + f_\Gamma (u) + \partial_\nu u \qquad &\text{on}\  \Gamma.
\end{align}
\end{subequations}
For $f_{\Omega},f_{\Gamma}\in C^1(\mathbb{R}, \mathbb{R})$ we define $f:=(f_\Omega,f_\Gamma)$. We assume that $f$ and $f'$ are Lipschitz continuous with Lipschitz constant $L>0$. $f$ is assumed to be sufficient regular for the following.\\
The nonlinear weak formulation is the same as in the linear case, with the only difference that $f$ depends on $u$ :
\begin{subequations}\label{non_lin_weak}
\begin{align}
m(\partial_t u, \phi^u)+a(w,\phi^u)&=0 \qquad & \forall \phi^u\in V,\\
m(w,\phi^w)-a(u, \phi^w) &= m(f(u),\phi^w)\qquad & \forall \phi^w\in V,
\end{align}
\end{subequations}
where $m(f(u),\phi^w)$ is understood as $m(f(u),\phi^w)=\int_\Omega f_\Omega(u)\phi^w+\int_\Gamma f_\Gamma(\gamma u)\gamma \phi^w$. 

The semidiscrete formulation then reads as: Find  $(u_h,w_h)\in C^1([0,T],V_h)\times L^2(0,T;V_h)$ such that 
\begin{subequations}\label{non_lin_semidis}
\begin{align}
m_h(\partial_t u_h, \phi^u_h)+a_h(w_h,\phi^u_h)&=0 \qquad & \forall \phi^u_h\in V_h,\\
m_h(w_h,\phi^w_h)-a_h(u_h , \phi^w_h) &= m(f(u_h),\phi^w_h) \qquad & \forall \phi^w_h\in V_h.
\end{align}
\end{subequations}
For the convergence theorem we assume that
\begin{align}\label{initial}
u_h(0)=\tilde{R}_h u(0),\qquad w_h(0)=\tilde{R}_h w(0).
\end{align}
An analogous convergence result as in the linear case, Theorem \ref{convergence}, also applies in this nonlinear case:
\begin{theorem}\label{non_lin_convergence}
		Let $(u,w)\in C^2([0,T],\lbrace v\in H^2(\Omega)\cap L^\infty(\Omega) : \gamma v\in H^2(\Gamma)\cap L^\infty(\Gamma)\rbrace )\times C^1([0,T],\lbrace v\in H^2(\Omega) : \gamma v\in H^2(\Gamma)\rbrace)$ be the solution of (\ref{non_linear_cahn_bulk})--(\ref{non_linear_cahn_surface}). Then, for sufficiently small $h\leq h_0$ the semidiscrete solution of (\ref{non_lin_semidis})
		 $(u_h,w_h)\in C^1([0,T],V_h)^2$, with initial values \eqref{initial}, satisfies the following error bounds for all $t$ with $0\leq t\leq T$,
	 \begin{align*}
||&u_h^l(t)-u(t)||^2+||w_h^l(t)-w(t)||^2  \\ \qquad & +\int_0^t||\partial_tu_h^l(s)-\partial_t u(s)||^2\mathrm{d}s +\int_0^t||w_h^l(s)-w(s)||^2\mathrm{d}s  \leq C h^2.
\end{align*}
In the weaker norm, $|\cdot |$, we obtain a second-order error bound
\begin{align*}
|&u_h^l(t)-u(t)|^2+|w_h^l(t)-w(t)|^2  \\ \qquad & +\int_0^t|\partial_tu_h^l(s)-\partial_t u(s)|^2\mathrm{d}s +\int_0^t|w_h^l(s)-w(s)|^2\mathrm{d}s  \leq C h^4.
\end{align*}
with $C>0$ depending on norms of $u,w,f$ and their derivatives and exponentially on $T$, but independent from $h$ and $t$.
\end{theorem}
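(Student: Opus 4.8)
The plan is to transplant the three-part architecture of the linear convergence proof (Theorem~\ref{convergence}, Section~\ref{conv}) to the nonlinear setting, keeping the Ritz-map machinery intact and upgrading only the two pillars it rests on: the semidiscrete stability estimate Proposition~\ref{energy_est_semidiscr} and the consistency estimate Proposition~\ref{pr_consist}. I would first split the errors through the Ritz map,
\[
u_h^l-u=(u_h-\tilde R_h u)^l+(R_h u-u),\qquad w_h^l-w=(w_h-\tilde R_h w)^l+(R_h w-w),
\]
and likewise for $\partial_t u_h^l-\partial_t u$. The Ritz remainders $R_hu-u$, $R_hw-w$, $R_h\partial_t u-\partial_t u$ are bounded \emph{verbatim} as in the linear case by \eqref{ritz_v} in the $V$-norm and \eqref{ritz2} in the $|\cdot|$-norm, producing the desired $O(h)$ and $O(h^2)$ contributions. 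Everything new therefore lives in the projected errors $e_h^u:=\tilde R_hu-u_h$ and $e_h^w:=\tilde R_hw-w_h$.

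Subtracting the semidiscrete $w$-equation \eqref{non_lin_semidis} from the perturbed one leaves, relative to the linear error system, one extra term $m\bigl(f(u_h^\ast)-f(u_h),\phi_h^w\bigr)$ with $u_h^\ast=R_hu$, and the global Lipschitz bound gives $\lvert f(u_h^\ast)-f(u_h)\rvert\le L\,\lvert e_h^u\rvert$. For the nonlinear stability estimate I would rerun the energy estimates of Proposition~\ref{energy_est}, continuing from the two intermediate stages \eqref{ee2an} and \eqref{ee2bn}, where the terms $\int_0^t m(\partial_t d_h^w,\cdot)$ were deliberately left unestimated: in the nonlinear system these become $\int_0^t m(\partial_t d_h^w+\partial_t g,\cdot)$ with $g:=f(u_h^\ast)-f(u_h)$. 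In the first energy estimate and in Part~a of the second, every $\partial_t e_h^u$-factor coming from the nonlinearity meets a \emph{different} test function ($e_h^u$ or $e_h^w$), so $\epsilon$-Young renders it harmless and Part~a still delivers control of $\int_0^t\lvert\partial_t e_h^u\rvert^2$ up to a term $\epsilon\int_0^t\lVert\partial_t e_h^u\rVert^2$ and Gronwall data.

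The delicate point is Part~b, the only place where the $\partial_t e_h^u$-factor meets $\partial_t e_h^u$ itself: the differentiated $w$-equation forces $m(\partial_t g,\partial_t e_h^u)$, and here I would use
\[
\partial_t\bigl(f(u_h^\ast)-f(u_h)\bigr)=\bigl(f'(u_h^\ast)-f'(u_h)\bigr)\partial_t u_h^\ast+f'(u_h)\,\partial_t e_h^u .
\]
The first summand is bounded, by Lipschitz continuity of $f'$, by $L\,\lVert\partial_t u_h^\ast\rVert_{L^\infty}\,\lvert e_h^u\rvert$, which feeds Gronwall; the $h$-uniform $L^\infty$ bound on $\partial_t u_h^\ast=R_h\partial_t u$ is exactly what the hypothesis $u\in C^2([0,T];L^\infty)$ buys, and I would obtain it by writing $R_h\partial_t u=I_h\partial_t u+(R_h\partial_t u-I_h\partial_t u)$, using the $L^\infty$-stability of interpolation \eqref{inter_infty_bulk}--\eqref{interpolation_infty_surface} on the first piece and an inverse inequality together with the $O(h^2)$ $L^2$-bound on the finite element function $R_h\partial_t u-I_h\partial_t u$ for the second. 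The second summand yields $\int_0^t f'(u_h)\lvert\partial_t e_h^u\rvert^2\le L\int_0^t\lvert\partial_t e_h^u\rvert^2$, with no small factor and hence not directly absorbable. The resolution uses the identity $\lVert v\rVert_a^2=\lVert v\rVert^2-\lvert v\rvert^2$ valid here ($\alpha=\mu=1$): the Part~b left-hand side $\int_0^t\lVert\partial_t e_h^u\rVert_a^2$ supplies the full $\int_0^t\lVert\partial_t e_h^u\rVert^2$ minus $\int_0^t\lvert\partial_t e_h^u\rvert^2$, and substituting the Part~a bound for $\int_0^t\lvert\partial_t e_h^u\rvert^2$ converts the stray $L\int_0^t\lvert\partial_t e_h^u\rvert^2$ into $L\epsilon\int_0^t\lVert\partial_t e_h^u\rVert^2$, which is absorbed once $\epsilon$ is chosen small depending on $L$. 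Because $f$ and $f'$ are \emph{globally} Lipschitz, no $L^\infty$ control of $u_h$ itself is needed, so no continuation/bootstrap argument on the discrete solution arises; Gronwall then closes the estimate with $C$ depending additionally on $L$ and $\lVert\partial_t u_h^\ast\rVert_{L^\infty}$.

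Finally I would upgrade consistency. The defect $d_h^u$ is unchanged and stays $O(h^2)$ in the dual norm. For $d_h^w$ the only new piece relative to the linear decomposition is $m\bigl(f(u)-f(R_hu),\cdot\bigr)$, bounded by $L\,\lvert u-R_hu\rvert\le CLh^2\lVert u\rVert_2$ via \eqref{ritz2}, and its time derivative by the analogous estimate using the Lipschitz bound on $f'$ and the regularity of $\partial_t u$; hence $\lVert d_h^w\rVert_{\ast,h}$ and $\lVert\partial_t d_h^w\rVert_{\ast,h}$ remain $O(h^2)$. Combining the nonlinear stability estimate with these $O(h^2)$ defect bounds and adding back the Ritz contributions, as in Section~\ref{conv}, yields the $O(h^2)$ $V$-norm and $O(h^4)$ $|\cdot|$-norm bounds. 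The main obstacle is precisely the Part~b bookkeeping just described: rendering the non-small $L\int_0^t\lvert\partial_t e_h^u\rvert^2$ absorbable by feeding in the Part~a estimate through the identity $\lVert\cdot\rVert_a^2=\lVert\cdot\rVert^2-\lvert\cdot\rvert^2$, while simultaneously securing the $h$-uniform $L^\infty$ bound on $R_h\partial_t u$.
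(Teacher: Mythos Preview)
Your overall architecture matches the paper's: the same Ritz-map decomposition, the same splitting into stability and consistency, and the same mechanism of adding the Part~a and Part~b estimates so that the stray $\int_0^t|\partial_t e_h^u|^2$ term from the nonlinearity is absorbed into $\int_0^t\|\partial_t e_h^u\|_{a^\ast}^2$. The paper proceeds exactly this way in its Part~e.

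The one genuinely different choice is your treatment of $\|f'(u_h)\|_{L^\infty}$. You observe that global Lipschitz continuity of $f$ already gives $|f'|\le L$, so $\|f'(u_h)\|_{L^\infty}\le L$ is free and no $L^\infty$ control of the discrete solution $u_h$ is required. The paper instead runs a continuation (bootstrap) argument: it fixes the maximal time $t^\ast$ with $|e_h^u(t)|_h\le h^{3/2}$, uses this together with an inverse inequality to bound $\|f'(u_h)\|_{L^\infty}$ on $[0,t^\ast]$, closes the estimate there, and then shows $|e_h^u|_h\le ch^2<h^{3/2}$ forces $t^\ast=T$. Under the stated global Lipschitz hypothesis your shortcut is legitimate and strictly simpler; the paper's bootstrap, on the other hand, is the argument one actually needs for the double-well nonlinearity $f(u)=u^3-u$ of the introduction, which is only locally Lipschitz. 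So your proof is correct for the theorem as stated but less portable.

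Your consistency sketch is thinner than the paper's and understates one piece. It is not true that ``the only new piece is $m(f(u)-f(R_hu),\cdot)$'': the nonlinear analogue of the geometric term is $m(f(R_hu),(\phi_h^w)^l)-m_h(f(\tilde R_hu),\phi_h^w)$, and for $\partial_t d_h^w$ this becomes $m(f'(R_hu)R_h\partial_tu,\cdot)-m_h(f'(\tilde R_hu)\tilde R_h\partial_tu,\cdot)$, which via \eqref{geom_appr_error} requires the $V$-norm bound $\|f'(R_hu)\,R_h\partial_tu\|\le c$. The paper spends considerable effort on exactly this (product rule, H\"older, inverse estimates); with your observation $|f'|\le L$, $|f''|\le L$ it reduces to controlling $\|R_h\partial_tu\|_{L^\infty}$ and $\|R_hu\|$, $\|R_h\partial_tu\|$, for which the interpolation-plus-inverse-estimate trick you already described for $\|R_h\partial_tu\|_{L^\infty}$ suffices. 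So the gap is fillable with the tools you have, but it is a real step that should be made explicit.
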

The proof of this theorem will be a result of a combination of a stability result, consistency and the error bound for the Ritz map. We take one section to discuss stability and one to discuss consistency, before we turn to this proof again.
\section{Stability}
We consider the perturbed equation system with the perturbed solution $(u_h^\ast, w_h^\ast) \in V_h^2$, which is a projected exact solution: 
\begin{subequations}\label{non_per}
	\begin{align*}
	m_h(\partial_t u_h^\ast, \phi^u_h)+a_h(w_h^\ast,\phi^u_h)&=m_h(d_h^u,\phi_h^u) \qquad & \forall \phi^u_h\in V_h\\
	m_h(w_h^\ast,\phi^w)-a_h(u_h^\ast , \phi^w_h) &= m_h(f(u_h^\ast),\phi^w_h)+m_h(d_h^w,\phi_h^w) \qquad & \forall \phi^w_h\in V_h.
	\end{align*}
\end{subequations}
We subtract the exact system \eqref{non_lin_semidis} and define $e_h^u:=u_h^\ast-u_h$ and $ e_h^w:= w_h^\ast-w_h$. Then the error equation system becomes the following:
\begin{subequations}\label{non_linear_defect}
	\begin{align}
	m_h(\partial_t e_h^u, \phi^u_h)+a_h(e^w_h,\phi^u_h)&=m_h(d_h^u,\phi_h^u) \qquad & \forall \phi^u_h\in V_h\\
	m_h(e^w_h,\phi^w)-a_h(e^u_h , \phi^w_h) &= m_h(f(u_h^\ast)-f(u_h),\phi^w_h)+m_h(d_h^w,\phi_h^w) \qquad & \forall \phi^w_h\in V_h.
	\end{align}
\end{subequations}
With stronger assumptions but similar to the linear case the following stability bound holds true: 
\begin{proposition}\label{non_energy_est_semidiscr}
	Let $(e^u_h, e^w_h) \in C^1([0,T];V_h)^2$ be the solution of the error equation system above with defects $d^u_h, d^w_h\in C^1([0,T];V_h)$. Let $u\in C^1([0,T],L^\infty(\Omega))$ and $u_h\in C^1([0,T],V_h)$. Furthermore let the following assumptions hold
	\begin{align}
		&|u-(u_h^\ast)^l|\leq c h^2 \text{ and } |\pa_tu-(\pa_tu_h^\ast)^l|\leq c h^2,\label{cond_ritz}\\
		\ &||d||_{\ast,h} \leq c h^2 \text{ for } d\in\lbrace d_h^u,d_h^w,\pa_td_h^u,\pa_t d_h^w\rbrace,\label{cond_def}\\
		\ &|| e_h^u(0)||_h\leq c h^2  \text{ and }||e_h^w(0)||_h^2\leq c h^2 \label{cond_init},
	\end{align}
	for $c<0$ independent from $h$.

	 Then  for all $t$ with $0<t\leq T$ yields
	\begin{align*}
	||&e_h^u(t)||_h^2+||e_h^w(t)||_h^2+\int_0^t||\partial_te_h^u(s)||_h^2\mathrm{d}s+\int_0^t||e_h^w(s)||_h^2\mathrm{d}s  \\  & 
	\leq C \Big( || e_h^u(0)||_h^2+||e_h^w(0)||_h^2 +|| d_h^u(0)||_{\ast,h}^2+||d_h^u(t)||_{\ast,h}^2\\
	& \qquad+\int_0^t||d_h^u(s)||_{\ast,h}^2 \mathrm{d}s+\int_0^t||\partial_td_h^u(s)||_{\ast,h}^2\mathrm{d}s+\int_0^t||d_h^w(s)||_{\ast,h}^2\mathrm{d}s+\int_0^t||\partial_td_h^w(s)||_{\ast,h}^2\mathrm{d}s\Big),
	\end{align*}
with $C$ depending on $\alpha, \mu,$, exponentially on $T$ and on $u,w,f$ and their derivatives, but independent from $h$ and $t$.
\end{proposition}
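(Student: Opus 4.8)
The plan is to run the three-part energy argument of Proposition~\ref{energy_est_semidiscr} essentially verbatim, treating the single new ingredient of the nonlinear error system \eqref{non_linear_defect} --- the term $m_h(g,\phi_h^w)$ with $g:=f(u_h^\ast)-f(u_h)$ on the right of the second equation --- as an extra source that must be absorbed or fed into Gronwall's inequality. Because the left-hand sides of the tested equations are unchanged, every cancellation exploited in the linear proof (testing with $e_h^u,e_h^w$ and with $e_h^w,\pa_t e_h^u$, then adding and subtracting to eliminate either $m_h$ or $a_h$) survives. I would therefore resume precisely at the two intermediate estimates \eqref{ee2an} and \eqref{ee2bn}, which were recorded for this purpose, and only account for the additional contributions $m_h(g,e_h^w)-m_h(g,\pa_t e_h^u)$ in the first energy estimate and $m_h(\pa_t g,e_h^w)$, $m_h(\pa_t g,\pa_t e_h^u)$ in the second.

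The undifferentiated term is routine. Lipschitz continuity of $f=(f_\Omega,f_\Gamma)$ gives the pointwise bound $|g|\le L|e_h^u|$, hence $|g|_h\le L|e_h^u|_h\le L\|e_h^u\|_h$; applying \eqref{dual} and the $\eps$-Young inequality turns each occurrence of $m_h(g,\cdot)$ into a small multiple of $\|e_h^w\|_h^2$ or $\int_0^t\|\pa_t e_h^u\|_h^2$, to be absorbed, plus a multiple of $\|e_h^u\|_h^2$ that Gronwall will handle. The delicate contributions come from the time derivative $\pa_t g=f'(u_h^\ast)\,\pa_t e_h^u+\bigl(f'(u_h^\ast)-f'(u_h)\bigr)\pa_t u_h$, which enters the second energy estimate. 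In Part~a the term $m_h(\pa_t g,e_h^w)$ splits by $\eps$-Young into a piece absorbed by the $\int_0^t|\pa_t e_h^u|_h^2$ already sitting on the left of \eqref{ee2an} and Gronwall-type pieces in $|e_h^w|_h^2$ and $|e_h^u|_h^2$.

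The genuine obstacle is $m_h(\pa_t g,\pa_t e_h^u)$ in Part~b. Its first summand yields $m_h(f'(u_h^\ast)\pa_t e_h^u,\pa_t e_h^u)\le\|f'\|_\infty|\pa_t e_h^u|_h^2$, an $H$-norm term that the energy semi-norm on the left of \eqref{ee2bn} cannot absorb. I would resolve this at the combination step by forming a weighted sum $\lambda\cdot(\text{Part a})+(\text{Part b})$ with $\lambda$ large, so that the $\lambda\int_0^t|\pa_t e_h^u|_h^2$ coming from the left of \eqref{ee2an} dominates the offending term; the Young and absorption parameters $\eps,\eps'$ are then chosen small relative to $1/\lambda$, exactly as in the linear Part~c. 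The second summand produces the product $|e_h^u|\,|\pa_t u_h|$, which cannot be estimated in $L^2$ without an $L^\infty$ bound on one factor. Writing $\pa_t u_h=\pa_t u_h^\ast-\pa_t e_h^u$, the part against $\pa_t u_h^\ast$ is bounded by $\|\pa_t u_h^\ast\|_{L^\infty}|e_h^u|_h$ --- a Gronwall term --- where the uniform $L^\infty$ bound on $(\pa_t u_h^\ast)^l$ follows from \eqref{cond_ritz}, the regularity $u\in C^1([0,T],L^\infty)$, and $L^\infty$-stability of the Ritz map (as for interpolation, Lemma~\ref{inter}); the part against $\pa_t e_h^u$ contributes $\|e_h^u\|_{L^\infty}|\pa_t e_h^u|_h^2$.

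It remains to bound $\|e_h^u\|_{L^\infty}$, which is the heart of the matter, and I would close the argument by a continuation (bootstrap) scheme: let $t^\ast\le T$ be maximal with $\|e_h^u(s)\|_{L^\infty}\le 1$ on $[0,t^\ast]$. On this interval $u_h=u_h^\ast-e_h^u$ stays in a fixed bounded set, so $\|f'\|_\infty$ and all the constants above are uniform, the nonlinear product terms carry the small factor $\|e_h^u\|_{L^\infty}$, and the linear machinery together with the consistency bounds \eqref{cond_def} delivers the asserted estimate with right-hand side of order $h^4$; in particular $\|e_h^u(t)\|_h\le Ch^2$. The inverse inequality then gives $\|e_h^u\|_{L^\infty}\le Ch^{-d/2}\|e_h^u\|_h\le Ch^{2-d/2}$, which for $d\le 3$ is strictly below $1$ once $h\le h_0$, forcing $t^\ast=T$. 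A final application of Gronwall's inequality and the $h$-uniform norm equivalence of Lemma~\ref{equivalence} yields the stated bound. I expect the coupling of the differentiated nonlinearity with the inverse estimate and this continuation argument to be the main difficulty; the remaining work is a bookkeeping repetition of the linear energy estimates.
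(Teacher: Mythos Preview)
Your plan is essentially the paper's own proof: reuse the linear energy estimates \eqref{ee2an}--\eqref{ee2bn}, handle the extra nonlinear source $g=f(u_h^\ast)-f(u_h)$ and its time derivative via Lipschitz continuity and H\"older/Young splittings, absorb the dangerous $|\pa_t e_h^u|_h^2$ contribution in Part~b by taking a large-weight combination with Part~a, and close with a continuation argument and the inverse estimate. The paper uses the alternative splitting $\pa_t g=(f'(u_h^\ast)-f'(u_h))\pa_t u_h^\ast+f'(u_h)\pa_t e_h^u$ and the bootstrap hypothesis $|e_h^u|_h\le h^{3/2}$ rather than your $\|e_h^u\|_{L^\infty}\le1$, but these are interchangeable variants leading to the same $L^\infty$ control after an inverse estimate.

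One point needs correction. You justify the uniform bound on $\|\pa_t u_h^\ast\|_{L^\infty}$ by invoking ``$L^\infty$-stability of the Ritz map (as for interpolation, Lemma~\ref{inter})'', but the Ritz map is not asserted to be $L^\infty$-stable in this paper, and assumption \eqref{cond_ritz} only gives an $H$-norm bound $|u-(u_h^\ast)^l|\le ch^2$. The paper obtains $\|\pa_t u_h^\ast\|_{L^\infty}\le c$ differently: insert the interpolant $\tilde I_h\pa_t u$, apply the inverse estimate to the finite-element difference $\pa_t u_h^\ast-\tilde I_h\pa_t u$ (which lies in $V_h$), bound this in $L^2$ by \eqref{cond_ritz} and \eqref{interpolation_bulk} to get $ch^{2-d/2}$, and control $\|\tilde I_h\pa_t u\|_{L^\infty}$ by \eqref{inter_infty_bulk}. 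The same trick is needed for $\|f'(u_h^\ast)\|_{L^\infty}$ in your first summand. With this fix your argument goes through.
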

The conditions \eqref{cond_ritz}--\eqref{cond_init} will be all satisfied for the semidiscrete weak formulation of the Cahn--Hilliard equation with dynamic Cahn--Hilliard boundary conditions. As the perturbed solution $u_h^\ast$ will be an image of the Ritz map later on, condition \eqref{cond_ritz} is satisfied with \eqref{ritz2}. The second-order bound for the defects \eqref{cond_def} will be shown in Proposition \ref{nl_consis}. The bounds for the initial errors \eqref{cond_init} are satisfied because of our choice for the initial values \eqref{initial}.
\begin{proof}
	For this proof we take intermediate results from the perturbation proof of the linear case, Proposition \ref{energy_est}. We adapt the structure from that proof, by having three sections, a first energy estimate, a second one and then their combination. Since in the error equation there is the additional nonlinear error term, $m_h(f(u_h^\ast)-f(u_h),\phi^w_h)$, the main effort of the proof is to bound its derivative. \\
	Let $0<t^\ast\leq T $ be maximal such that 
	\begin{equation}\label{t_stern}
	|e_h^u(t)|_h\leq h^{\frac{3}{2}
	}\ \forall t\leq t^\ast.
	\end{equation}
	Such an $t^\ast$ exists, because $|e_h^u(0)|_h\leq c h^2$.
	The following is shown for $t\leq t^\ast$.\\
	\textit{(i) First Energy Estimate}\\
	We adapt the estimate (\ref{ee1}), change it into a semidiscrete version and complement it by the nonlinear term of the defect in $w$ to obtain
	\begin{align*}
	||e_h^u(t)||_h^2+\int_0^t ||e_h^w(s)||_h^2\mathrm{d}s   & \leq c\Big(||e_h^u(0)||_h^2+\nonumber
	\int_0^t||d_h^u(s)||_{\ast, h}^2\mathrm{d}s +\int_0^t||d_h^w(s)||_{\ast, h}^2\mathrm{d}s\\
	& \qquad +\int_0^t||f(u_h^\ast(s))-f(u_h(s))||_{\ast, h}^2\mathrm{d}s \Big)+\frac{1}{2}\int_0^t||\partial_t e_h^u(s)||_h^2\mathrm{d}s.
	\end{align*}
	The nonlinear defect term can be estimated, using the Lipschitz continuity of $f$, as follows
	\begin{align*}
	||f(u_h^\ast
	)-f(u_h)||_{\ast, h} &\leq c |f(u_h^\ast)-f(u_h)|_{h}
	\leq c L |u_h^\ast-u_h|_{h}
	\leq c |e_h^u|_h.
	\end{align*}
	We insert this in the above estimate and obtain
	\begin{align}\label{nee1}
	||e_h^u(t)||_h^2+\int_0^t ||e_h^w(s)||_h^2\mathrm{d}s   & \leq c\Big(||e_h^u(0)||_h^2+\int_0^t|e_h^u|_h^2\mathrm{d}s\nonumber
	+\int_0^t||d_h^u(s)||_{\ast, h}^2\mathrm{d}s \\
	& \qquad  +\int_0^t||d_h^w(s)||_{\ast, h}^2\mathrm{d}s\Big)+\frac{1}{2}\int_0^t||\partial_t e_h^u(s)||_h^2\mathrm{d}s.
	\end{align}
	\textit{(ii) Second energy estimate.}\\
	This section is structured in five subsections. Part a, b and c consider an energy estimate in the $|\cdot|_h$-norm, Part d an estimate in the $ ||\cdot||_{a,h}$-norm and Part e combines the estimates. Part b is an parenthesis for estimating particular terms and Part c sequels Part a.\\
	\textit{Part a:}\\
	The semidiscrete variant of (\ref{ee2an}) with nonlinear defect term is the following
	\begin{align*}
	|e_h^w(t)|_h^2+\int_0^t|\pa_t e_h^u(s)|_h^2\mathrm{d}s &\leq |e_h^w(0)|_h^2+\eps\int_0^t||\pa_t e_h^u(s)||_h^2\mathrm{d}s + \frac{1}{\eps}\int_0^t||d_h^u(s)||_{\ast, h}^2\mathrm{d}s\\
	& \qquad +2\int_0^tm_h(\pa_td_h^w(s),e_h^w(s))\mathrm{d}s\\
	& \qquad +2\int_0^tm_h(\pa_t(f(u_h^\ast(s))-f(u_h(s))),e_h^w(s))\mathrm{d}s.
	\end{align*}
Estimating the linear term of the derivative of the defect with (\ref{dual}) and Young's inequality yields
	\begin{align}\label{non_lin_z2}
	|e_h^w(t)|_h^2+\int_0^t|\pa_t e_h^u(s)|_h^2\mathrm{d}s &\leq |e_h^w(0)|_h^2+\eps\int_0^t||\pa_t e_h^u(s)||_h^2\mathrm{d}s+\int_0^t|| e_h^w(s)||_h^2\mathrm{d}s \nonumber\\
	& \qquad+ \frac{1}{\eps}\int_0^t||d_h^u(s)||_{\ast, h}^2\mathrm{d}s +\int_0^t||\pa_t d_h^w(s)||_{\ast, h}^2\mathrm{d}s\nonumber\\
	& \qquad +2\int_0^tm_h(\pa_t(f(u_h^\ast(s))-f(u_h(s))),e_h^w(s))\mathrm{d}s.
	\end{align}
We estimate the time derivative of the nonlinear term. By applying the chain rule we obtain
	\begin{align*}
	m_h(\pa_t(f(u_h^\ast)-f(u_h)),e_h^w) & = m_h(f'(u_h^\ast)\pa_t u_h^\ast -f'(u_h)\pa_t u_h,e_h^w)\\
	& = m_h(f'(u_h^\ast)\pa_t u_h^\ast -f'(u_h)\pa_t u_h^\ast ,e_h^w)\\
	& \qquad +m_h(f'(u_h)\pa_t u_h^\ast -f'(u_h)\pa_t u_h,e_h^w)\\
	& = m_h((f'(u_h^\ast) -f'(u_h))\pa_t u_h^\ast ,e_h^w)+m_h(f'(u_h)(\pa_t u_h^\ast -\pa_t u_h),e_h^w)
	\end{align*}
	The Cauchy-Schwarz inequality and the splitting of the factors into its bulk and boundary component then yields
	\begin{align*}
	m_h(\pa_t(f(u_h^\ast)-f(u_h)),e_h^w) 
	&\leq  |(f'(u_h^\ast) -f'(u_h))\pa_t u_h^\ast|_h| e_h^w|_h+|f'(u_h)(\pa_t u_h^\ast -\pa_t u_h)|_h|e_h^w|_h\\
	& = ||(f_\Omega'(u_h^\ast) -f_\Omega'(u_h))\pa_t u_h^\ast||_{L^2(\Omega_h)}| e_h^w|_h\\
	& \qquad +||f_\Omega '(u_h)(\pa_t u_h^\ast -\pa_t u_h)||_{L^2(\Omega_h)}|e_h^w|_h\\
	&\qquad  + ||(f_\Gamma'(\gamma_h u_h^\ast) -f_\Gamma'(\gamma_h u_h))\gamma_h(\pa_t u_h^\ast)||_{L^2(\Gamma_h)}| e_h^w|_h\\
	& \qquad +||f_\Gamma'(\gamma_h u_h)\gamma_h(\pa_t u_h^\ast -\pa_t u_h)||_{L^2(\Gamma_h)}|e_h^w|_h
	\end{align*}
	 We apply the generalized H\"older inequality to the first factors of all for terms and obtain
	 	\begin{align*}
	 	m_h(\pa_t(f(u_h^\ast)-f(u_h)),e_h^w) 
	 	&\leq   ||f_\Omega'(u_h^\ast) -f_\Omega'(u_h)||_{L^2(\Omega_h)}||\pa_t u_h^\ast||_{L^\infty(\Omega_h)}| e_h^w|_h\\
	 	& \qquad +||f_\Omega'(u_h)||_{L^\infty(\Omega_h)}||\pa_t u_h^\ast -\pa_t u_h||_{L^2(\Omega_h)}|e_h^w|_h\\
	 	&\qquad  + ||f_\Gamma'(\gamma_h u_h^\ast) -f_\Gamma'(\gamma_h u_h)||_{L^2(\Gamma_h)}||\pa_t u_h^\ast||_{L^\infty(\Gamma_h)}| e_h^w|_h\\
	 	& \qquad +||f_\Gamma'(\gamma_h u_h)||_{L^\infty(\Gamma_h)}||\gamma_h(\pa_t u_h^\ast -\pa_t u_h)||_{L^2(\Gamma_h)}|e_h^w|_h
	 	\end{align*}
	We use the Lipschitz continuity of $f'$, which then yields
	\begin{align}\label{non_lin_z1}
	m_h(\pa_t(f(u_h^\ast)-f(u_h)),e_h^w) 
	&\leq L ||e_h^u||_{L^2(\Omega_h)}||\pa_t u_h^\ast||_{L^\infty(\Omega_h)}| e_h^w|_h\nonumber \\
	& \qquad +||f_\Omega'(u_h)||_{L^\infty(\Omega_h)}||\pa_t e_h^u||_{L^2(\Omega_h)} |e_h^w|_h\nonumber \\
	&\qquad  + L||\gamma_h e_h^u||_{L^2(\Gamma_h)}||\gamma_h(\pa_t u_h^\ast)||_{L^\infty(\Gamma_h)}| e_h^w|_h\nonumber\\
	& \qquad +||f_\Gamma'(\gamma_h u_h)||_{L^\infty(\Gamma_h)}||\gamma_h(\pa_t e_h^u)||_{L^2(\Gamma_h)}|e_h^w|_h
	\end{align}
		\textit{Part b: Estimation of }$||\pa_t u_h^\ast||_{L^\infty(\Omega_h)}, ||\gamma_h(\pa_t u_h^\ast)||_{L^\infty(\Gamma_h)} $\textit{ and} $||f_\Omega'(u_h)||_{L^\infty(\Omega_h)}$, $||f_\Gamma'(\gamma_h u_h)||_{L^\infty(\Gamma_h)} .$\\
		To bound these terms we will apply the inverse estimate as shown in \textit{Brenner\&Scott (2008)}, \cite[Theorem ~4.5.11]{brenner}. Beside the equivalence of continuous and discrete norm as shown in Lemma \ref{equivalence} we need an equivalence in the $L^\infty$-norms. The discrete and continuous maximums norms are even equal, because the lift operator is a composition with a bijective function.
		
		I. \textit{Estimation of} $||\pa_t u_h^\ast||_{L^\infty(\Omega_h)}$
		
		We insert $\tilde{I}_h\pa_tu$ and use the triangle inequality to obtain
	\begin{align*}
	||\pa_t u_h^\ast||_{L^\infty(\Omega_h)} & \leq ||\pa_t u_h^\ast- \tilde{I}_h\pa_tu||_{L^\infty(\Omega_h)}+||\tilde{I}_h\pa_tu||_{L^\infty(\Omega_h)}.
	\end{align*}
	The inverse estimate yields
	\begin{align*}
	||\pa_t u_h^\ast||_{L^\infty(\Omega_h)} & \leq c h^{-\frac{d}{2}}||\pa_t u_h^\ast- \tilde{I}_h\pa_tu||_{L^2(\Omega_h)}+||\tilde{I}_h\pa_tu||_{L^\infty(\Omega_h)}.
	\end{align*}
	With the equivalence of norms it is obtained
	\begin{align*}
	||\pa_t u_h^\ast||_{L^\infty(\Omega_h)} & \leq c (h^{-\frac{d}{2}}||(\pa_t u_h^\ast)^l- {I}_h\pa_tu||_{L^2(\Omega)}+||{I}_h\pa_tu||_{L^\infty(\Omega)}).
	\end{align*}
	We add and subtract $\pa_tu$ and use the triangle inequality, which yields
	\begin{align*}
	||\pa_t u_h^\ast||_{L^\infty(\Omega_h)} & \leq c (h^{-\frac{d}{2}}(||(\pa_t u_h^\ast)^l-\pa_tu||_{L^2(\Omega)}+ ||\pa_tu- {I}_h\pa_tu||_{L^2(\Omega)})\\ & \qquad+||{I}_h\pa_tu-\pa_tu||_{L^\infty(\Omega)}+||\pa_tu||_{L^\infty(\Omega)}).
	\end{align*}
	Prerequisite \eqref{cond_ritz} and the interpolation error \eqref{interpolation_bulk} bound yields
	\begin{align*}
	||\pa_t u_h^\ast||_{L^\infty(\Omega_h)} & \leq c h^{-\frac{d}{2}}(h^2+ h^2 ||\pa_tu||_{H^2(\Omega)})+||{I}_h\pa_tu-\pa_tu||_{L^\infty(\Omega)}+||\pa_tu||_{L^\infty(\Omega)}.
	\end{align*}
	The second to last term is estimated with \eqref{inter_infty_bulk} and we obtain
	\begin{align*}
	||\pa_t u_h^\ast||_{L^\infty(\Omega_h)} & \leq c h^{-\frac{d}{2}}(h^2+ h^2 ||\pa_tu||_{H^2(\Omega)})+c||\pa_tu||_{L^\infty(\Omega)}+||\pa_tu||_{L^\infty(\Omega)}.
	\end{align*}
	For $d\leq 4$ we arrive at $$||\pa_t u_h^\ast||_{L^\infty(\Omega_h)}  \leq c.$$
	
	II. \textit{Estimation of} $||f_\Omega'(u_h)||_{L^\infty(\Omega_h)}$
	
	We add and subtract $f_\Omega'(u_h^\ast)$ and obtain
	\begin{align*}
	||f_\Omega'(u_h)||_{L^\infty(\Omega_h)} & \leq ||f_\Omega'(u_h)-f_\Omega'(u_h^\ast)||_{L^\infty(\Omega_h)} + ||f_\Omega'(u_h^\ast)||_{L^\infty(\Omega_h)}.
	\end{align*}
		We use the norm equivalence and add and subtract $f_\Omega'(u)$ to obtain
		\begin{align*}
		||f_\Omega'(u_h)||_{L^\infty(\Omega_h)} & \leq c (||f_\Omega'(u_h)-f_\Omega'(u_h^\ast)||_{L^\infty(\Omega_h)} + ||f_\Omega'((u_h^\ast)^l)-f_\Omega'(u)||_{L^\infty(\Omega)}+ ||f_\Omega'(u)||_{L^\infty(\Omega)}).
		\end{align*}
		The application of Lipschitz continuity yields
		\begin{align*}
		||f_\Omega'(u_h)||_{L^\infty(\Omega_h)} & \leq c(L||u_h-u_h^\ast||_{L^\infty(\Omega_h)} + L||(u_h^\ast)^l-u||_{L^\infty(\Omega)}+ ||f_\Omega'(u)||_{L^\infty(\Omega)}).
		\end{align*}
			We add and subtract $I_hu$ and obtain
			\begin{align*}
			||f_\Omega'(u_h)||_{L^\infty(\Omega_h)} & \leq c(||u_h-u_h^\ast||_{L^\infty(\Omega_h)} + ||(u_h^\ast)^l-I_hu||_{L^\infty(\Omega)}\\ & \qquad  + ||u-I_hu||_{L^\infty(\Omega)}+ ||f_\Omega'(u)||_{L^\infty(\Omega)}).
			\end{align*}
	We use the inverse estimate and obtain
	\begin{align*}
	||f_\Omega'(u_h)||_{L^\infty(\Omega_h)} & \leq c(h^{-\frac{d}{2}}||u_h-u_h^\ast||_{L^2(\Omega_h)} + h^{-\frac{d}{2}}||u_h^\ast-\tilde{I}_hu||_{L^2(\Omega_h)}\\ & \qquad  + ||u-I_hu||_{L^\infty(\Omega)}+ ||f_\Omega'(u)||_{L^\infty(\Omega)}).
	\end{align*}
	The addition and subtraction of $u$ yields
	\begin{align*}
	||f_\Omega'(u_h)||_{L^\infty(\Omega_h)} & \leq c(h^{-\frac{d}{2}}||u_h-u_h^\ast||_{L^2(\Omega_h)} + h^{-\frac{d}{2}}(||(u_h^\ast)^l-u||_{L^2(\Omega)}+||I_hu-u||_{L^2(\Omega)})\\ & \qquad  + ||u-I_hu||_{L^\infty(\Omega)}+ ||f_\Omega'(u)||_{L^\infty(\Omega)}).
	\end{align*}
	On the second last term we apply the interpolation error (\ref{inter_infty_bulk}) and obtain
	\begin{align*}
	||f_\Omega'(u_h)||_{L^\infty(\Omega_h)} & \leq c(h^{-\frac{d}{2}}||e_h^u||_{L^2(\Omega_h)} + h^{-\frac{d}{2}}||(u_h^\ast)^l-u||_{L^2(\Omega)}+h^{-\frac{d}{2}}||I_hu-u||_{L^2(\Omega)}\\ & \qquad  + ||u||_{L^\infty(\Omega)}+ ||f_\Omega'(u)||_{L^\infty(\Omega)}).
	\end{align*}
	
	We obtain second-order bounds  for $||(u_h^\ast)^l-u||_{L^2(\Omega)} $ with (\ref{cond_ritz}) and for $||I_hu-u||_{L^2(\Omega)} $ with the interpolation error (\ref{interpolation_bulk}). The term $||e_h^u||_{L^2(\Omega_h)}$ is bounded with (\ref{t_stern}), hence, for $d\leq 3$, $||f_\Omega'(u_h)||_{L^\infty(\Omega_h)}$ can be bounded independently from $h$.
	
	III. \textit{Estimation of} $||\gamma_h\pa_t u_h^\ast||_{L^\infty(\Gamma_h)}\  and\  ||f_\Gamma'(\gamma_h u_h)||_{L^\infty(\Gamma_h)}$
		
		The estimates as done in I. and II. can be transferred to the surface case. The only difference is the exponent of $h$ in the inverse estimate, which is $-\frac{d-1}{2}$, because the finite elements are one dimension lower. \\	
		\textit{Part c. Sequel of Part a: }\\
		The following is shown for $t\leq t^\ast$. We take (\ref{non_lin_z1}) and use that $||\pa_t u_h^\ast||_{L^\infty(\Omega_h)}$, $||\pa_t u_h^\ast||_{L^\infty(\Gamma_h)} $ and $||f'(u_h)||_{L^\infty(\Omega_h)}, ||f'(u_h)||_{L^\infty(\Gamma_h)} $ are bounded independently of $h$ and obtain	
		\begin{align*}
		m_h(\pa_t(f(u_h^\ast)-f(u_h)),e_h^w) 
		& \leq c (L|e_h^u|_h\  |e_h^w|_h +|\pa_t e_h^u|_h\ |e_h^w|_h).
		\end{align*}
		We use the Young inequality, the $\eps$-Young inequality, which then yields
			\begin{align*}
			m_h(\pa_t(f(u_h^\ast)-f(u_h)),e_h^w) 
			& \leq c(|e_h^u|_h^2+ \eps'|\pa_t e_h^u|_h^2+ |e_h^w|_h^2 +\frac{1}{\eps'} |e_h^w|_h^2).
			\end{align*}
		This estimate for $m_h(\pa_t(f(u_h^\ast)-f(u_h)),e_h^w) $ is plugged into (\ref{non_lin_z2}) and we arrive at
			\begin{align*}
			|e_h^w(t)|_h^2 +\int_0^t|\pa_t e_h^u(s)|_h^2\mathrm{d}s&\leq c\Big(|e_h^w(0)|_h^2 +\int_0^t|e_h^u(s)|_h^2\mathrm{d}s+\eps'\int_0^t |\pa_t e_h^u(s)|_h^2\mathrm{d}s\\ &\qquad +\eps\int_0^t||\pa_t e_h^u(s)||_h^2\mathrm{d}s +\Big(1+\frac{1}{\eps'} \Big)\int_0^t|e_h^w(s)|_h^2\mathrm{d}s\\ &\qquad+\int_0^t|| e_h^w(s)||_h^2\mathrm{d}s+ \frac{1}{\eps}\int_0^t||d_h^u(s)||_{\ast, h}^2\mathrm{d}s+\int_0^t||\pa_t d_h^w(s)||_{\ast, h}^2\mathrm{d}s\Big).
			\end{align*}	
			We choose $\eps':=\frac{1}{2c}$ that the third term on the left-hand side is absorbed, and obtain
				\begin{align}\label{non_lin_z4}
				|e_h^w(t)|_h^2 +\int_0^t|\pa_t e_h^u(s)|_h^2\mathrm{d}s&\leq c\Big(|e_h^w(0)|_h^2 +\int_0^t|e_h^u(s)|_h^2\mathrm{d}s +\eps\int_0^t||\pa_t e_h^u(s)||_h^2\mathrm{d}s +\int_0^t|e_h^w(s)|_h^2\mathrm{d}s\nonumber\\ &\qquad+\int_0^t|| e_h^w(s)||_h^2\mathrm{d}s+ \frac{1}{\eps}\int_0^t||d_h^u(s)||_{\ast, h}^2\mathrm{d}s+\int_0^t||\pa_t d_h^w(s)||_{\ast, h}^2\mathrm{d}s\Big).
				\end{align}	
		\textit{Part d:}\\
		We take the inequality (\ref{ee2bn}) with semidiscrete notation and a nonlinear defect:
		\begin{align*}
		||e_h^w(t)||_{a,h}^2+\int_0^t||\pa_t e_h^u(s)||_{a,h}^2\mathrm{d}s&\leq ||e_h^w(0)||_{a,h}^2 +||e_h^w(0)||_{h}^2+\eps '||e^w(t)||_{h}^2+ ||d_h^u(0)||_{\ast,h}^2\\
		& \qquad +\frac{1}{\eps '}||d_h^u(t)||_{\ast,h}^2+\int_0^t||e_h^w(s)||_h^2\mathrm{d}s+ \int_0^t||d_h^u(s)||_{\ast, h}^2\mathrm{d}s\\
		& \qquad +2\Big|\int_0^tm_h(\pa_td_h^w(s),\pa_t e_h^u(s))\mathrm{d}s\Big|\\
		& \qquad +2\Big|\int_0^tm_h(\pa_t(f(u_h^\ast(s))-f(u_h(s))),\pa_t e_h^u(s))\mathrm{d}s\Big|.
		\end{align*}
		For the second to last term the estimate (\ref{dual}) and the $\eps$-Young inequality are applied, and yield
		\begin{align}\label{non_lin_z3}
		||e_h^w(t)||_{a,h}^2+\int_0^t||\pa_t e_h^u(s)||_{a,h}^2\mathrm{d}s &\leq ||e_h^w(0)||_{a,h}^2 +||e_h^w(0)||_{h}^2+\eps '||e^w(t)||_{h}^2+ ||d_h^u(0)||_{\ast,h}^2\nonumber\\
		& \qquad +\frac{1}{\eps '}||d_h^u(t)||_{\ast,h}^2+\int_0^t||e_h^w(s)||_h^2\mathrm{d}s+\eps\int_0^t||\pa_t e_h^u(s)||_h^2\mathrm{d}s\nonumber\\
		& \qquad + \int_0^t||d_h^u(s)||_{\ast, h}^2\mathrm{d}s+\frac{1}{\eps}\int_0^t||\pa_t d_h^w(s)||_{\ast, h}^2\mathrm{d}s\nonumber\\
		& \qquad +2\Big|\int_0^tm_h(\pa_t(f(u_h^\ast(s))-f(u_h(s))),\pa_t e_h^u(s))\mathrm{d}s\Big|.
		\end{align}
	
		We consider the integrand of the last term. With the same proof as in Part a and b we obtain
		\begin{align*}
		m_h(\pa_t(f(u_h^\ast)-f(u_h)),\pa_t e_h^u) 
		& \leq c (|e_h^u|_h\  |\pa_t e_h^u|_h + |\pa_t e_h^u|_h\ |\pa_t e_h^u|_h).
		\end{align*}
		The Young inequality yields
		\begin{align*}
		m_h(\pa_t(f(u_h^\ast)-f(u_h)),\pa_t e_h^u) 
		& \leq c_0(|e_h^u|_h^2  +|\pa_t e_h^u|_h^2).
		\end{align*}
		This estimate is plugged into (\ref{non_lin_z3}), and then we obtain
		\begin{align}\label{nl_z10}
		||e_h^w(t)||_{a,h}^2+\int_0^t||\pa_t e_h^u(s)||_{a,h}^2\mathrm{d}s &\leq ||e_h^w(0)||_{a,h}^2 +||e_h^w(0)||_{h}^2+\eps '||e^w(t)||_{h}^2+ ||d_h^u(0)||_{\ast,h}^2\nonumber\\
		& \qquad +\frac{1}{\eps '}||d_h^u(t)||_{\ast,h}^2+c_0\int_0^t|e_h^u(s)|_h^2\mathrm{d}s +c_0\int_0^t|\pa_t e_h^u(s)|_h^2\mathrm{d}s\nonumber\\
		& \qquad+\int_0^t||e_h^w(s)||_h^2\mathrm{d}s+\eps\int_0^t||\pa_t e_h^u(s)||_h^2\mathrm{d}s\nonumber\\
		& \qquad + \int_0^t||d_h^u(s)||_{\ast, h}^2\mathrm{d}s+\frac{1}{\eps}\int_0^t||\pa_t d_h^w(s)||_{\ast, h}^2\mathrm{d}s.
		\end{align}
		\textit{Part e: Adding the Estimates:}\\
		We multiply (\ref{non_lin_z4}) by $2c_0$ and add it to the estimate \eqref{nl_z10}, which then yields
	\begin{align*}
	2c_0&|e_h^w(t)|_h^2+||e_h^w(t)||_{a,h}^2+2c_0\int_0^t|\pa_t e_h^u(s)|_h^2\mathrm{d}s+ \int_0^t||\pa_t e_h^u(s)||_{a,h}^2\mathrm{d}s\\ &\leq 2c_0c\Big(|e_h^w(0)|_h^2 +\int_0^t|e_h^u(s)|_h^2\mathrm{d}s +\eps\int_0^t||\pa_t e_h^u(s)||_h^2\mathrm{d}s \nonumber+\int_0^t|| e_h^w(s)||_h^2\mathrm{d}s\\ &\qquad+ \frac{1}{\eps}\int_0^t||d_h^u(s)||_{\ast, h}^2\mathrm{d}s+\int_0^t||\pa_t d_h^w(s)||_{\ast, h}^2\mathrm{d}s\Big)+||e_h^w(0)||_{a,h}^2 +||e_h^w(0)||_{h}^2\\ &\qquad+\eps '||e^w(t)||_{h}^2+ ||d_h^u(0)||_{\ast,h}^2\nonumber+\frac{1}{\eps '}||d_h^u(t)||_{\ast,h}^2+c_0\int_0^t|e_h^u(s)|_h^2\mathrm{d}s +c_0\int_0^t|\pa_t e_h^u(s)|_h^2\mathrm{d}s\\
	& \qquad+\int_0^t||e_h^w(s)||_h^2\mathrm{d}s+\eps\int_0^t||\pa_t e_h^u(s)||_h^2\mathrm{d}s\nonumber+ \int_0^t||d_h^u(s)||_{\ast, h}^2\mathrm{d}s+\frac{1}{\eps}\int_0^t||\pa_t d_h^w(s)||_{\ast, h}^2\mathrm{d}s.
	\end{align*}
	The integral term with $|\pa_t e_h^u(s)|_h^2$ is absorbed and we obtain
	\begin{align*}
	||e_h^w(t)||_h^2 +\int_0^t||\pa_t e_h^u(s)||_h^2\mathrm{d}s&\leq c\Big(||e_h^w(0)||_h^2 +\eps ||e^w(t)||_{h}^2 + ||d_h^u(0)||_{\ast,h}^2+\frac{1}{\eps }||d_h^u(t)||_{\ast,h}^2 \\
	& \qquad +\int_0^t|e_h^u(s)|_h^2\mathrm{d}s+\eps\int_0^t||\pa_t e_h^u(s)||_h^2\mathrm{d}s+\int_0^t|| e_h^w(s)||_h^2\mathrm{d}s\\
	& \qquad+ \Big(1+\frac{1}{\eps}\Big)\int_0^t||d_h^u(s)||_{\ast, h}^2\mathrm{d}s+\Big(1+\frac{1}{\eps}\Big)\int_0^t||\pa_t d_h^w(s)||_{\ast, h}^2\mathrm{d}s\Big).
	\end{align*}
	To absorb the  on the second and sixth term on the right-hand side we choose $\eps :=\frac{1}{2c}$ which yields
			\begin{align*}
			||e_h^w(t)||_h^2 +\int_0^t||\pa_t e_h^u(s)||_h^2\mathrm{d}s&\leq c\Big(||e_h^w(0)||_h^2 + ||d_h^u(0)||_{\ast,h}^2+||d_h^u(t)||_{\ast,h}^2  +\int_0^t|e_h^u(s)|_h^2\mathrm{d}s\\
			& \qquad+\int_0^t|| e_h^w(s)||_h^2\mathrm{d}s+ \int_0^t||d_h^u(s)||_{\ast, h}^2\mathrm{d}s+\int_0^t||\pa_t d_h^w(s)||_{\ast, h}^2\mathrm{d}s\Big).
			\end{align*}
		To lose the $ || e_h^w(s)||_h^2$-integral term we applied Gronwall's inequality and obtain
		\begin{align}\label{nee2}
		||e_h^w(t)||_h^2 +\int_0^t||\pa_t e_h^u(s)||_h^2\mathrm{d}s&\leq c\Big(||e_h^w(0)||_h^2 + ||d_h^u(0)||_{\ast,h}^2+||d_h^u(t)||_{\ast,h}^2  +\int_0^t|e_h^u(s)|_h^2\mathrm{d}s\nonumber\\
		& \qquad + \int_0^t||d_h^u(s)||_{\ast, h}^2\mathrm{d}s+\int_0^t||\pa_t d_h^w(s)||_{\ast, h}^2\mathrm{d}s\Big)
		\end{align}
		\textit{(iii) Combination of Energy Estimates}\\
		We add (\ref{nee1}) and (\ref{nee2}) and obtain for $t\leq t^\ast$
			\begin{align*}
			||&e_h^u(t)||_h^2+||e_h^w(t)||_h^2 +\int_0^t||\pa_t e_h^u(s)||_h^2\mathrm{d}s+\int_0^t ||e_h^w(s)||_h^2\mathrm{d}s\\&\leq c\Big(||e_h^u(0)||_h^2  +||e_h^w(0)||_h^2+ ||d_h^u(0)||_{\ast,h}^2+||d_h^u(t)||_{\ast,h}^2 +\int_0^t|e_h^u(s)|_h^2\mathrm{d}s\\
			& \qquad + \int_0^t||d_h^u(s)||_{\ast, h}^2\mathrm{d}s+\int_0^t||d_h^w(s)||_{\ast, h}^2\mathrm{d}s+\int_0^t||\pa_t d_h^w(s)||_{\ast, h}^2\mathrm{d}s  \Big)+\frac{1}{2}\int_0^t||\partial_t e_h^u(s)||^2\mathrm{d}s.
			\end{align*}
			The last term is absorbed, which yields
			 	\begin{align}\label{non_lin_z5}
			 	||&e_h^u(t)||_h^2+||e_h^w(t)||_h^2 +\int_0^t||\pa_t e_h^u(s)||_h^2\mathrm{d}s+\int_0^t ||e_h^w(s)||_h^2\mathrm{d}s\nonumber\\&\leq c\Big(||e_h^u(0)||_h^2  +||e_h^w(0)||_h^2+ ||d_h^u(0)||_{\ast,h}^2+||d_h^u(t)||_{\ast,h}^2 +\int_0^t|e_h^u(s)|_h^2\mathrm{d}s\nonumber\\
			 	& \qquad + \int_0^t||d_h^u(s)||_{\ast, h}^2\mathrm{d}s+\int_0^t||d_h^w(s)||_{\ast, h}^2\mathrm{d}s+\int_0^t||\pa_t d_h^w(s)||_{\ast, h}^2\mathrm{d}s  \Big).
			 	\end{align}
			 	
			Now it remains to show that $t^\ast =T$.\\
			Using prerequisites \eqref{cond_def} and \eqref{cond_init} we know that the right-hand side of (\ref{non_lin_z5}) is $\mathcal{O}(h^4)$, this yields
			\begin{align*}
			|e_h^u(t)|_h^2 \leq||e_h^u(t)||_h^2 \leq c h^4 \qquad \text{for } t\leq t^\ast
			\end{align*}
			For $h^\frac{1}{2}\leq \frac{c}{2}$ this implies
			$$|e_h^u(t)|_h \leq \frac{h^{\frac{3}{2}}}{2} \qquad \text{for } t\leq t^\ast,$$
			because of continuity this contradicts the maximality of $t^\ast$, the only option is $t^\ast=T$.
	\end{proof}

\section{Consistency}
In this section we prove error estimates for the defects and their derivatives, therefore also proving condition (\ref{cond_def}) of Proposition \ref{non_energy_est_semidiscr}.

\begin{proposition}\label{nl_consis}
	Let $(u,w)$ be a solution of (\ref{non_linear_cahn_bulk})--(\ref{non_linear_cahn_surface}) that satisfies the regularity assumptions of Theorem \ref{non_lin_convergence}. We define the perturbed solution $u_h^\ast:=R_hu$, $w_h^\ast:= R_hw$ using the Ritz map.
	Then there exist defects $d_h^u, d_h^w\in C^1([0,t];V_h)$ such that 
	$(u_h^\ast,w_h^\ast)$ is a solution of the system (\ref{non_per}).
	Then these defects satisfy
	\begin{align}
		||d_h^u(t)||_{\ast,h}^2 \leq C h^4  &\text{ and } ||\pa_t d_h^u(t)||_{\ast,h}^2 \leq C h^4, \\
	||d_h^w(t)||_{\ast,h}^2 \leq C h^4  &\text{ and } ||\pa_t d_h^w(t)||_{\ast,h}^2 \leq C h^4,
	\end{align}
	with $C>0$ depending on $T, L$ and on norms of $u,w,f$ and their derivatives, but independent from $h$ and $t$.
\end{proposition}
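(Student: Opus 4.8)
The plan is to mirror the structure of the linear consistency proof, Proposition \ref{pr_consist}, treating separately the defect in $u$, the defect in $w$, and their time derivatives. Existence of $d_h^u,d_h^w\in C^1([0,T];V_h)$ follows verbatim from the Riesz representation argument given there, since $V_h=H_h$ and $m_h$ is an inner product; the time regularity is inherited from the assumed regularity of $u,w,f$ and the identity $\pa_t R_h=R_h\pa_t$. The first point to record is that the defect in $u$ is \emph{unaffected} by the nonlinearity: equation~(a) of both the semidiscrete system \eqref{non_lin_semidis} and its perturbation \eqref{non_per} coincides with the linear case and does not contain $f$. Hence $\|d_h^u(t)\|_{\ast,h}^2\le c h^4\|\pa_t u\|_2^2$ and $\|\pa_t d_h^u(t)\|_{\ast,h}^2\le c h^4\|\pa_{tt} u\|_2^2$ are obtained exactly as in Proposition \ref{pr_consist}, via the geometric approximation error (\ref{geom_appr_error}) and the Ritz bounds (\ref{ritz2}),(\ref{ritz_v}).

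For the defect in $w$ I would start from $m_h(d_h^w,\phi_h^w)=m_h(\tilde R_h w,\phi_h^w)-a_h(\tilde R_h u,\phi_h^w)-m_h(f(\tilde R_h u),\phi_h^w)$, use the definition of the Ritz map to replace $a_h(\tilde R_h u,\phi_h^w)$ by $a(u,(\phi_h^w)^l)$, and then invoke the continuous equation~(b) of \eqref{non_lin_weak} to trade $a(u,(\phi_h^w)^l)$ for $m(w,(\phi_h^w)^l)-m(f(u),(\phi_h^w)^l)$. Since lifting commutes with the pointwise application of $f$, so that $(f(\tilde R_h u))^l=f(R_h u)$, the defect splits into a geometric error for the mass term $m_h(\tilde R_h w,\cdot)-m((\tilde R_h w)^l,\cdot)$, a Ritz term $m(R_h w-w,\cdot)$, a new nonlinear Lipschitz term $m(f(u)-f(R_h u),\cdot)$, and a geometric error stemming from the nonlinearity. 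The first two are $O(h^2)$ in the dual norm as before; the Lipschitz term is estimated by Cauchy--Schwarz, $|f(u)-f(R_h u)|\le L|u-R_h u|$ and the $L^2$ Ritz bound (\ref{ritz2}). For the last piece I would note that $f(\tilde R_h u)\notin V_h$, so I first replace it by its nodal interpolant (incurring an $O(h^2)$ error through Lemma \ref{inter}) and only then apply (\ref{geom_appr_error}) to the interpolant, which lies in $V_h$. Summing gives $\|d_h^w(t)\|_{\ast,h}^2\le C h^4$.

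The main obstacle is the derivative $\pa_t d_h^w$. Differentiating the identity for $d_h^w$ and using $\pa_t R_h=R_h\pa_t$, the troublesome contribution is $\pa_t\big(f(u)-f(R_h u)\big)=f'(u)\pa_t u-f'(R_h u)R_h\pa_t u$. I would control it with the same add--subtract device as in the stability proof, writing it as $(f'(u)-f'(R_h u))\pa_t u+f'(R_h u)(\pa_t u-R_h\pa_t u)$ and applying the generalized Hölder inequality. The first summand uses Lipschitz continuity of $f'$, the $L^2$ Ritz estimate for $u-R_h u$, and an $L^\infty$ bound on $\pa_t u$ coming from the regularity assumption $u\in C^2([0,T],L^\infty)$ of Theorem \ref{non_lin_convergence}; the second uses an $L^\infty$ bound on $f'(R_h u)$, which is produced by the inverse estimate exactly as in Part~b of the stability proof of Proposition \ref{non_energy_est_semidiscr}, together with the $L^2$ Ritz estimate for $\pa_t u-R_h\pa_t u$. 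Both are $O(h^2)$, so after adding the remaining geometric and interpolation pieces one reaches $\|\pa_t d_h^w(t)\|_{\ast,h}^2\le C h^4$. I expect the careful propagation of these $L^\infty$ bounds, rather than any individual inequality, to be the most delicate part of the argument.
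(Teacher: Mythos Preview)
Your plan matches the paper's proof closely: the defect in $u$ is identical to the linear case, and for $d_h^w$ you arrive at the same four-term split as the paper's equation (\ref{non_def}). One difference worth noting: you propose to interpolate $f(\tilde R_h u)$ before invoking the geometric approximation estimate (\ref{geom_appr_error}), since $f(\tilde R_h u)\notin V_h$. The paper simply applies (\ref{geom_appr_error}) directly; this is legitimate because the underlying estimate is a change-of-variables bound on $|\det DG_h - 1|$ and holds for any $H^1$ functions, not only elements of $V_h$, even though Lemma~\ref{lemma_geom} is stated for $V_h$. Your interpolation route also works but is more circuitous, since $f(\tilde R_h u)$ is only piecewise $H^2$ and the interpolation bound must be applied elementwise.

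Where you underestimate the work is the geometric error term in $\pa_t d_h^w$. After differentiating, besides the two pieces $(f'(u)-f'(R_hu))\pa_tu$ and $f'(R_hu)(\pa_tu-R_h\pa_tu)$ that you correctly handle via H\"older, Lipschitz, and the inverse-estimate $L^\infty$ bound on $f'(R_hu)$, there remains $m(f'(R_hu)R_h\pa_tu,(\phi_h^w)^l)-m_h(f'(\tilde R_hu)\tilde R_h\pa_tu,\phi_h^w)$. Applying (\ref{geom_appr_error}) here gives $ch^2\|f'(R_hu)R_h\pa_tu\|\,\|(\phi_h^w)^l\|$, and the norm on the left is the full $V$-norm, i.e.\ an $H^1$-type norm involving $\nabla(f'(R_hu)R_h\pa_tu)$. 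Bounding this is the longest and most technical part of the paper's proof: it requires the product rule, H\"older splitting into $L^2\times L^\infty$, inverse estimates to pass $R_h\pa_tu$ and $R_hu$ from $L^2$ to $L^\infty$, and the interpolation and Ritz bounds to close. Your closing remark that ``propagation of $L^\infty$ bounds'' is the delicate point is in the right spirit, but you should be aware that it is this $V$-norm (gradient) control of the product, not merely the $L^2$ bounds on the first two pieces, that absorbs most of the effort.
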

\begin{proof}
	The existence for the defects can be obtained analogous to the linear case, cf. proof of Proposition \ref{pr_consist}.
	
	This proof consists of three parts, the first part treats the defect in $u$ and its derivative, the second part deals with the estimate of the defect in $w$ and third with its derivative. \\
\textit{(i) Defect in $u$ and its derivative}\\
The equations for the defect in $u$, $d_h^u$, and its derivative stay the same as in the linear case (\ref{defect_1}):
\begin{equation*}
m(d_h^u,\phi_h^u)=(m_h(\tilde{R}_h\partial_t u,\phi_h^u)-m((\tilde{R}_h \partial_t u)^l,(\phi_h^u)^l))-m(\partial_t u-R_h \partial_tu,(\phi_h^u)^l),
\end{equation*}
and (\ref{der_def_1})
\begin{equation*}
m_h(\partial_t d_h^u,\phi_h^u) = (m_h(\tilde{R}_h\partial_{tt} u,\phi_h^u)-m((\tilde{R}_h \partial_{tt} u)^l,(\phi_h^u)^l))-m(\partial_{tt} u-R_h \partial_{tt}u,(\phi_h^u)^l),
\end{equation*}
which yields the same bounds then on $d_h^u$ and $\pa_t d_h^u$, cf. proof of Proposition  \ref{pr_consist}.\\
\textit{(ii) Defect in $w$}\\
For the defect in $w$ we consider the error equations \eqref{non_linear_defect}. Analogues to (\ref{defect_2}) in the linear case, we obtain, by complement the dependency of $f$ from $u$ and add the nonlinear error term
\begin{align*}
m_h(d_h^w,\phi_h^w)+m_h(f(\tilde{R}_h u)-f(u_h),\phi^w_h) &= m_h(\tilde{R}_hw,\phi_h^w)-m((\tilde{R}_hw)^l,(\phi_h^w)^l)\\ & \qquad+m(R_hw-w,(\phi_h^w)^l)\\ & \qquad+m(f(u),(\phi_h^w)^l)-m_h(f(u_h),\phi_h^w).
\end{align*}
Bringing the second term on the left-hand side to the right-hand side yields
\begin{align}\label{non_def}
m_h(d_h^w,\phi_h^w)&= m_h(\tilde{R}_hw,\phi_h^w)-m((\tilde{R}_hw)^l,(\phi_h^w)^l)\nonumber\\ & \qquad+m(R_hw-w,(\phi_h^w)^l)\nonumber\\ & \qquad+m(f(u),(\phi_h^w)^l)-m_h(f(\tilde{R}_h u),\phi_h^w).
\end{align}
The first two terms on the right-hand side can be estimated as in the linear case. We now consider the last term. Because $f(v_h^l)=f(v_h\circ G_h^{-1})=(f\circ v_h)\circ G_h^{-1}=(f(v_h))^l\ \forall v_h\in V_h$ we obtain
\begin{align*}
m(f(u),(\phi_h^w)^l)-m_h(f(\tilde{R}_h u),\phi_h^w) & = m(f(u),(\phi_h^w)^l)-m(f({R}_h u),\phi_h^w)\\ 
& \qquad +m((f(\tilde{R}_h u))^l,(\phi_h^w)^l)-m_h(f(\tilde{R}_h u),\phi_h^w) .
\end{align*}
The first term on the right-hand side can be estimated with the Lipschitz continuity and the error for the Ritz map (\ref{ritz2}). For the second term on the right-hand side we apply the geometric approximation error (\ref{geom_appr_error}), which then yields
\begin{align*}
m((f(\tilde{R}_h u))^l,(\phi_h^w)^l)-m_h(f(\tilde{R}_h u),\phi_h^w) & \leq c h^2 ||(f(\tilde{R}_h u))^l||\ ||(\phi_h^w)^l||\\
 & \leq c h^2 (||f({R}_h u)-f(u)||+||f(u)||)\ ||(\phi_h^w)^l||.
\end{align*}
Using the Lipschitz continuity and the Ritz map error bound afterwards we obtain
\begin{align*}
m((f(\tilde{R}_h u))^l,(\phi_h^w)^l)-m_h(f(\tilde{R}_h u),\phi_h^w)
& \leq c h^2 (L||{R}_h u-u||+||f(u)||)\ ||(\phi_h^w)^l||\\
& \leq c h^2 (Lh^2||u||_2+||f(u)||)\ ||(\phi_h^w)^l||\\
& \leq c h^2 (||u||_2+||f(u)||)\ ||(\phi_h^w)^l||.
\end{align*}

\textit{(iii) Derivative of the defect in $w$.}\\
Because the order of using the lift operator and differentiation cannot be interchanged in general, we take equation (\ref{non_def}) without the first two terms on the right-hand side, differentiate it with respect to $t$ and add and subtract $
 m((\tilde{R}_h\pa_tw)^l,(\phi_h^w)^l)$ to obtain
\begin{align*}
m_h(\pa_t d_h^w,\phi_h^w)&= m_h(\tilde{R}_h\pa_tw,\phi_h^w)-m((\tilde{R}_h\pa_tw)^l,(\phi_h^w)^l)\nonumber\\ & \qquad+m(R_h\pa_t w-\pa_t w,(\phi_h^w)^l)\nonumber\\ & \qquad+m(f'(u)\pa_tu,(\phi_h^w)^l)-m_h(f'(\tilde{R}_h u)\tilde{R}_h \pa_tu,\phi_h^w).
\end{align*}
The first two terms on the right-hand side can be estimated as in the linear case. We consider the last term. We add and subtract $m(f'(R_h u)R_h \pa_tu,(\phi_h^w)^l)$, which then yields
\begin{align*}
m&(f'(u)\pa_tu,(\phi_h^w)^l)-m_h(f'(\tilde{R}_h u)\tilde{R}_h \pa_tu,\phi_h^w) \\ & = m(f'(u)\pa_tu,(\phi_h^w)^l)-m(f'(R_h u)R_h \pa_tu,(\phi_h^w)^l)\\ & \qquad+m(f'(R_h u)R_h \pa_tu,(\phi_h^w)^l)-m_h(f'(\tilde{R}_h u)\tilde{R}_h\pa_t u,\phi_h^w).
\end{align*}
Another term is inserted and we obtain
\begin{align}\label{part_a}
m&(f'(u)\pa_tu,(\phi_h^w)^l)-m_h(f'(\tilde{R}_h u)\tilde{R}_h \pa_tu,\phi_h^w) \nonumber\\ & = m(f'(u)\pa_tu,(\phi_h^w)^l)-m(f'(R_h u) \pa_tu,(\phi_h^w)^l)\nonumber\\ & \qquad+ m(f'(R_h u)\pa_tu,(\phi_h^w)^l)-m(f'(R_h u)R_h \pa_tu,(\phi_h^w)^l)\nonumber\\ & \qquad+m(f'(R_h u)R_h \pa_tu,(\phi_h^w)^l)-m_h(f'(\tilde{R}_h u)\tilde{R}_h\pa_t u,\phi_h^w)\nonumber\\
& = m((f'(u)-f'(R_h u) )\pa_tu,(\phi_h^w)^l)+ m(f'(R_h u)(\pa_tu-R_h \pa_tu),(\phi_h^w)^l)\nonumber\\ & \qquad+(m(f'(R_h u)R_h \pa_tu,(\phi_h^w)^l)-m_h(f'(\tilde{R}_h u)\tilde{R}_h\pa_t u,\phi_h^w)).
\end{align}
On the last term the geometric approximation error (\ref{geom_appr_error}) is applied 
\begin{align}\label{con_1}
m(f'(R_h u)R_h \pa_tu,(\phi_h^w)^l)-m_h(f'(\tilde{R}_h u)\tilde{R}_h\pa_t u,\phi_h^w)\leq c h^2 ||f'(R_h u)R_h \pa_tu||\  ||(\phi_h^w)^l||.
\end{align}
We  have to show that $||f'(R_h u)R_h \pa_tu||$ is smaller than some constant. Because we need a second-order bound, we obtain this term in the stronger norm. The next two pages we will deal with the lengthy estimate of this term.

 We decompose the norm in its bulk and surface part, which yields
\begin{align*}
||f'(R_h u)R_h \pa_tu|| & = ||f_\Omega '(R_h u)R_h \pa_tu||_{L^2(\Omega)}+||\nabla(f_\Omega'(R_h u)R_h \pa_tu)||_{L^2(\Omega)}\\
& \qquad + ||f_\Gamma'(\gamma (R_h u))\gamma( R_h \pa_tu)||_{L^2(\Gamma)}+||\nabla_\Gamma(f_\Gamma'(\gamma (R_h u))\gamma (R_h \pa_tu))||_{L^2(\Gamma)}.
\end{align*}
We consider the second term, the order terms can be bounded in a similar way. With the product rule and the triangle inequality it is obtained
\begin{align*}
||\nabla(f_\Omega'(R_h u)R_h \pa_tu)||_{L^2(\Omega)} & \leq ||\nabla(f_\Omega'(R_h u))\cdot R_h \pa_tu||_{L^2(\Omega)}+||f_\Omega'(R_h u)\cdot \nabla R_h \pa_tu||_{L^2(\Omega)}.
\end{align*}
By applying the generalized H\"older inequality it yields
\begin{align*}
||\nabla(f_\Omega'(R_h u)R_h \pa_tu)||_{L^2(\Omega)} & \leq ||\nabla(f_\Omega'(R_h u))||_{L^2(\Omega)} ||R_h \pa_tu||_{L^\infty(\Omega)}\\
& \qquad +||f_\Omega'(R_h u)||_{L^\infty(\Omega)} ||\nabla R_h \pa_tu||_{L^2(\Omega)}.
\end{align*}
We use that $||\nabla v||_{L^2(\Omega)}\leq || v||$. We add and subtract $f_\Omega'(u)$ and apply the Lipschitz continuity to obtain
\begin{align*}
||\nabla(f_\Omega'(R_h u)R_h \pa_tu)||_{L^2(\Omega)} & \leq (||f_\Omega'(R_h u)-f_\Omega'(u)||+||f_\Omega'(u)||)\ ||R_h \pa_tu||_{L^\infty(\Omega)}\\
& \qquad +(||f_\Omega'(R_h u)-f_\Omega'(u)||_{L^\infty(\Omega)} +||f_\Omega'(u)||_{L^\infty(\Omega)})|| R_h \pa_tu||\\
& \leq (L||R_h u-u||+||f_\Omega'(u)||)||R_h \pa_tu||_{L^\infty(\Omega)}\\
& \qquad +(L||R_h u-u||_{L^\infty(\Omega)} +||f_\Omega'(u)||_{L^\infty(\Omega)})|| R_h \pa_tu||.
\end{align*}

The interpolations ${I}_h \pa_t u$ and ${I}_hu$ are added and subtracted. The application of the triangle inequality then yields
\begin{align*}
||&\nabla(f_\Omega'(R_h u)R_h \pa_tu)||_{L^2(\Omega)}\\ & \leq (L||R_h u-u||+||f_\Omega'(u)||)( ||{R}_h \pa_tu-{I}_h \pa_t u||_{L^\infty(\Omega)} +||{I}_h \pa_t u||_{L^\infty(\Omega)})\\
& \qquad +(L(||R_h u-I_hu||_{L^\infty(\Omega)}+||u-I_hu||_{L^\infty(\Omega)})+||f_\Omega'(u)||_{L^\infty(\Omega)})||R_h \pa_tu||.
\end{align*}
To be able to apply the inverse estimate, we use the equivalence of the discrete and continuous norms and obtain
\begin{align*}
||&\nabla(f_\Omega'(R_h u)R_h \pa_tu)||_{L^2(\Omega)}\\ & \leq c(L||R_h u-u||+||f_\Omega'(u)||)( h^{-\frac{d}{2}} ||{R}_h \pa_tu-{I}_h \pa_t u||_{L^2(\Omega)} +||{I}_h \pa_t u||_{L^\infty(\Omega)})\\
& \qquad +c(L( h^{-\frac{d}{2}}||R_h u-I_hu||_{L^2(\Omega)}+||u-I_hu||_{L^\infty(\Omega)})+||f_\Omega'(u)||_{L^\infty(\Omega)})|| R_h \pa_tu||.
\end{align*}
 The norm equivalence is used again and we add and subtract $ u$ and $\pa_t u$. We apply the triangle inequality and obtain
\begin{align*}
||&\nabla(f_\Omega'(R_h u)R_h \pa_tu)||_{L^2(\Omega)}\\ & \leq c(L||R_h u-u||+||f_\Omega'(u)||)( h^{-\frac{d}{2}}( ||{R}_h \pa_tu- \pa_t u||_{L^2(\Omega)}+||{I}_h \pa_tu- \pa_t u||_{L^2(\Omega)})\\
& \qquad \qquad +||{I}_h \pa_t u-\pa_t u||_{L^\infty(\Omega)}+||\pa_t u||_{L^\infty(\Omega)})\\
& \qquad +c(L( h^{-\frac{d}{2}}(||R_h u-u||_{L^2(\Omega)}+||I_h u-u||_{L^2(\Omega)})+||u-I_hu||_{L^\infty(\Omega)})+||f_\Omega'(u)||_{L^\infty(\Omega)})\\
& \qquad\qquad(|| R_h \pa_tu-\pa_t u||+||\pa_t u||).
\end{align*}
The first- and second order error bounds for the Ritz error (\ref{ritz2}) and (\ref{ritz_v}), are applied and it yields
\begin{align*}
||&\nabla(f_\Omega'(R_h u)R_h \pa_tu)||_{L^2(\Omega)}\\ & \leq c(L||u||_2+||f_\Omega'(u)||)( h^{-\frac{d}{2}}( h^2||\pa_t u||_h+||{I}_h \pa_tu- \pa_t u||_{L^2(\Omega)})\\
& \qquad \qquad +||{I}_h \pa_t u-\pa_t u||_{L^\infty(\Omega)}+||\pa_t u||_{L^\infty(\Omega)})\\
& \qquad +c(L( h^{-\frac{d}{2}}(h^2||u||_2+||I_h u-u||_{L^2(\Omega)})+||u-I_hu||_{L^\infty(\Omega)})+||f_\Omega'(u)||_{L^\infty(\Omega)})\\
& \qquad\qquad(h||\pa_t u||_2+||\pa_t u||)\\
 & \leq c( h^{-\frac{d}{2}}( h^2||\pa_t u||_2+||{I}_h \pa_tu- \pa_t u||_{L^2(\Omega)}) +||{I}_h \pa_t u-\pa_t u||_{L^\infty(\Omega)}+||\pa_t u||_{L^\infty(\Omega)})\\
 & \qquad +c(( h^{-\frac{d}{2}}(h^2||u||_2+||I_h u-u||_{L^2(\Omega)})+||u-I_hu||_{L^\infty(\Omega)})+||f_\Omega'(u)||_{L^\infty(\Omega)}).
\end{align*}
We apply interpolation error estimates, Lemma \ref{inter}. For the bound in the $L^2$-norm we use \eqref{interpolation_bulk} and in the $L^\infty$-norm we use \eqref{inter_infty_bulk}. We obtain
\begin{align*}
||\nabla(f_\Omega'(R_h u)R_h \pa_tu)||_{L^2(\Omega)}
& \leq c( h^{-\frac{d}{2}}( h^2||\pa_t u||_2+h^2||\pa_t u||_2) +||\pa_t u||_{L^\infty(\Omega)})\\
& \qquad +c(( h^{-\frac{d}{2}}(h^2||u||_2+h^2||u||_2)+||u||_{L^\infty(\Omega)})+||f_\Omega'(u)||_{L^\infty(\Omega)}).
\end{align*}
For a dimension $d\leq 4$ the right-hand side above is smaller than a constant $c$ independent from $h$.\\

We consider again equation (\ref{part_a}) and will now bound $|(f'(u)-f'(R_h u) )\pa_tu|$ and $|f'(R_h u)(\pa_tu-R_h \pa_tu) |$. We use the generalized H\"older inequality for the first term and obtain
\begin{align*}
|(f'(u)-f'(R_h u) )\pa_tu| & = ||(f_\Omega '(u)-f_\Omega '(R_h u) )\pa_tu||_{L^2(\Omega)}\\
& \qquad + |(f_\Gamma '(\gamma u)-f_\Gamma '(\gamma (R_h u)) )\gamma (\pa_tu)||_{L^2(\Gamma)}\\
&  \leq  c(||f_\Omega '(u)-f_\Omega '(R_h u) ||_{L^2(\Omega)}||\pa_tu||_{L^\infty(\Omega)}\\
& + ||f_\Gamma '(\gamma u)-f_\Gamma '(\gamma (R_h u)) ||_{L^2(\Gamma)}||\gamma (\pa_tu)||_{L^\infty(\Gamma)}).
\end{align*}
The application of the Lipschitz continuity and the error bound for the Ritz map results in the desired second-order bound
\begin{align}\label{con_2}
|(f'(u)-f'(R_h u) )\pa_tu| \leq cL  h^2||u||_2(||\pa_tu||_{L^\infty(\Omega)}+||\gamma (\pa_tu)||_{L^\infty(\Gamma)})\leq ch^2.
\end{align}
For the second term we again start with using the generalized H\"older inequality, which then yields
\begin{align*}
|f'(R_h u)(\pa_tu-R_h \pa_tu)| & = ||f_\Omega'(R_h u)(\pa_tu-R_h \pa_tu)||_{L^2(\Omega)}\\ & \qquad+ |f_\Gamma'(\gamma(R_h u))\gamma(\pa_tu-R_h \pa_tu)||_{L^2(\Gamma)}\\
&  \leq  c(||\pa_tu-R_h \pa_tu ||_{L^2(\Omega)}||f_\Omega'(R_h u)||_{L^\infty(\Omega)}\\
& + ||\gamma(\pa_tu-R_h \pa_tu) ||_{L^2(\Gamma)}||f_\Gamma'(\gamma(R_h u))||_{L^\infty(\Gamma)}).
\end{align*}
On the first factor we apply the error estimate for  the Ritz map and obtain
\begin{align*}
|f'(R_h u)(\pa_tu-R_h \pa_tu)|
&  \leq  ch^2(||\pa_tu ||_2||f_\Omega'(R_h u)||_{L^\infty(\Omega)}+||\pa_tu ||_2||f_\Gamma'(\gamma(R_h u))||_{L^\infty(\Gamma)}).
\end{align*}
For bounding $||f_\Omega'(R_h u)||_{L^\infty(\Omega)} $ we add and subtract $f_\Omega'(u)$, use the Lipschitz continuity, which then yields
\begin{align*}
||f_\Omega'(R_h u)||_{L^\infty(\Omega)} &  \leq ||f_\Omega'(R_h u)-f_\Omega'(u)||_{L^\infty(\Omega)}+||f_\Omega'(u)||_{L^\infty(\Omega)}\\
&  \leq L||R_h u-u||_{L^\infty(\Omega)}+||f_\Omega'(u)||_{L^\infty(\Omega)}
\end{align*}
 We add and subtract $I_hu$, use the equivalence of norms and the inverse estimate, which yields
\begin{align*}
||f_\Omega'(R_h u)||_{L^\infty(\Omega)} &  \leq L||R_h u-u||_{L^\infty(\Omega)}+||f_\Omega'(u)||_{L^\infty(\Omega)}\\
& \leq L(||R_h u-I_hu||_{L^\infty(\Omega)}+||I_hu-u||_{L^\infty(\Omega)})+||f_\Omega'(u)||_{L^\infty(\Omega)}\\
& \leq cL(h^{-\frac{d}{2}}||R_h u-I_hu||_{L^2(\Omega)}+||I_hu-u||_{L^\infty(\Omega)})+||f_\Omega'(u)||_{L^\infty(\Omega)}.
\end{align*}
We add and subtract $u$. With Lipschitz continuity and the Ritz projection bound we obtain
\begin{align*}
||f_\Omega'(R_h u)||_{L^\infty(\Omega)}
& \leq cL(h^{-\frac{d}{2}}(||R_h u-u||_{L^2(\Omega)}+|| u-I_hu||_{L^2(\Omega)})\\
& \qquad+||I_hu-u||_{L^\infty(\Omega)})+||f_\Omega'(u)||_{L^\infty(\Omega)}\\
& \leq cL(h^{-\frac{d}{2}}(h^2||u||_2+|| u-I_hu||_{L^2(\Omega)})+||I_hu-u||_{L^\infty(\Omega)})+||f_\Omega'(u)||_{L^\infty(\Omega)}.
\end{align*} 
We apply the error bound for the interpolation error, both in the $L^2$-norm and the $L^\infty$-norm, which then yields
\begin{align*}
||f_\Omega'(R_h u)||_{L^\infty(\Omega)}
& \leq cL(h^{-\frac{d}{2}}(h^2||u||_2+h^2|| u||_2)+||u||_{L^\infty(\Omega)})+||f_\Omega'(u)||_{L^\infty(\Omega)}.
\end{align*} 
For a dimension $d\leq 4$ the term $||f_\Omega'(R_h u)||_{L^\infty(\Omega)}$ is bound independently from $h$.
The term $||f_\Gamma'(\gamma(R_h u))||_{L^\infty(\Gamma)} $ is bounded by a constant with the same proof.
This yields the second-order estimate
\begin{align}\label{con_3}
|f'(R_h u)(\pa_tu-R_h \pa_tu) | 
&\leq ch^2.
\end{align}
Altogether we obtain for the derivative of the defect in $w$
\begin{align*}
m_h(\pa_t d_h^w,\phi_h^w)&= (m_h(\tilde{R}_h\pa_tw,\phi_h^w)-m((\tilde{R}_h\pa_tw)^l,(\phi_h^w)^l))\nonumber\\ & \qquad+m(R_h\pa_t w-\pa_t w,(\phi_h^w)^l)\nonumber\\ & \qquad +m((f'(u)-f'(R_h u) )\pa_tu,(\phi_h^w)^l)+ m(f'(R_h u)(\pa_tu-R_h \pa_tu),(\phi_h^w)^l)\nonumber\\ & \qquad+(m(f'(R_h u)R_h \pa_tu,(\phi_h^w)^l)-m_h(f'(\tilde{R}_h u)\tilde{R}_h\pa_t u,\phi_h^w)).
\end{align*}
The first and the last term are both estimated with the geometric approximation error, which yields 
\begin{align*}
m_h(\pa_t d_h^w,\phi_h^w)&\leq ch^2 ||{R}_h\pa_tw||\ ||(\phi_h^w)^l||+m(R_h\pa_t w-\pa_t w,(\phi_h^w)^l)\nonumber\\ & \qquad +m((f'(u)-f'(R_h u) )\pa_tu,(\phi_h^w)^l)+ m(f'(R_h u)(\pa_tu-R_h \pa_tu),(\phi_h^w)^l)\nonumber\\ & \qquad+ch^2||f'(R_h u)R_h \pa_tu||\ ||(\phi_h^w)^l||.
\end{align*}
We add and subtract $\pa_t w$ and apply the Cauchy-Schwarz inequality to obtain
\begin{align*}
m_h(\pa_t d_h^w,\phi_h^w)&\leq ch^2 (||{R}_h\pa_tw- \pa_t w||+||\pa_t w||)\ ||(\phi_h^w)^l||+|R_h\pa_t w-\pa_t w|\ |(\phi_h^w)^l|\nonumber\\ & \qquad +|(f'(u)-f'(R_h u) )\pa_tu|\ |(\phi_h^w)^l|+ |f'(R_h u)(\pa_tu-R_h \pa_tu)|\ |(\phi_h^w)^l|\nonumber\\ & \qquad+ch^2||f'(R_h u)R_h \pa_tu||\ ||(\phi_h^w)^l||.
\end{align*}

The Ritz error bounds \eqref{ritz2} and \eqref{ritz_v} yields
\begin{align*}
m_h(\pa_t d_h^w,\phi_h^w)&\leq ch^2 (h||\pa_t w||_2+||\pa_t w||)\ ||(\phi_h^w)^l||+ch^2||\pa_t w||\ |(\phi_h^w)^l|\nonumber\\ & \qquad +|(f'(u)-f'(R_h u) )\pa_tu|\ |(\phi_h^w)^l|+ |f'(R_h u)(\pa_tu-R_h \pa_tu)|\ |(\phi_h^w)^l|\nonumber\\ & \qquad+ch^2||f'(R_h u)R_h \pa_tu||\ ||(\phi_h^w)^l||.
\end{align*}

With the bounds shown above \eqref{con_1}, \eqref{con_2} and \eqref{con_3} we obtain
\begin{align*}
m_h(\pa_t d_h^w,\phi_h^w)&\leq ch^2 (h||\pa_t w||_2+||\pa_t w||)\ ||(\phi_h^w)^l||+ch^2||\pa_t w||\ |(\phi_h^w)^l|\nonumber\\ & \qquad +ch^2|(\phi_h^w)^l|+ ch^2\ |(\phi_h^w)^l|\nonumber+ch^2||(\phi_h^w)^l||\\
& \leq c h^2 |\phi_h^w||.
\end{align*}

For the defect in the dual norm it yields 
\begin{align*}
||\pa_td_h^w||_{\ast,h} = \sup_{\phi_h^w \in V_h\setminus \lbrace 0 \rbrace}\frac{m_h(\pa_td_h^w,\phi_h^w)}{||\phi_h^w||_h}
\leq \sup_{\phi_h^w \in V_h\setminus \lbrace 0 \rbrace}\frac{c h^2 ||\phi_h^w||}{||\phi_h^w||_h} = ch^2\
\end{align*}

\end{proof}
\section{Convergence}

	\begin{proof}[Proof of Theorem \ref{non_lin_convergence}]
	
		The proof mainly combines the previous two sections about stability and consistency and is similar to the linear variant.

We use the same decomposition as in the linear case:
$$u_h^l-u = (u_h-\tilde{R}_h u)^l+(R_hu-u) ,$$
$$w_h^l-w = (w_h-\tilde{R}_h w)^l+(R_hw-w),$$
$$\partial_t u_h^l-\partial_t u = (\partial_t u_h-\tilde{R}_h \partial_t u)^l+(R_h \partial_t u-\partial_t u).$$

The second terms are estimated with (\ref{ritz2}) for  second-order estimate in the $|\cdot|$-norm and with (\ref{ritz_v}) for a first-order estimate in the $||\cdot||$-norm.

 To estimate the first terms we define the errors $e_h^u:=\tilde{R}_hu-u_h$ and $ e_h^w:=\tilde{R}_hw-w_h$. We will apply the stability result, Proposition \ref{non_energy_est_semidiscr}, therefore we review the prerequisites  of this proposition:
  \eqref{cond_ritz} holds true with the bounds for the Ritz map, 
  \eqref{cond_def} is satisfied due to the Proposition \ref{nl_consis},
  \eqref{cond_init} is fulfilled because of assumption (\ref{initial}).

The assumption of $e_h^u(0)=0$ and $e_h^w(0)=0$ implies $d_h^u(0)=0$ and $d_h^w(0)=0$. Applying Proposition \ref{non_energy_est_semidiscr} yields the following stability estimate for the errors
\begin{align*}
||&e_h^u(t)||_h^2+||e_h^w(t)||_h^2+\int_0^t||\partial_te_h^u(s)||_h^2\mathrm{d}s+\int_0^t||e_h^w(s)||_h^2\mathrm{d}s  \\  & 
\leq c \Big( ||d_h^u(t)||_{\ast,h}^2+\int_0^t||d_h^u(s)||_{\ast,h}^2 \mathrm{d}s+\int_0^t||\partial_td_h^u(s)||_{\ast,h}^2\mathrm{d}s\\
& \qquad+\int_0^t||d_h^w(s)||_{\ast,h}^2\mathrm{d}s+\int_0^t||\partial_td_h^w(s)||_{\ast,h}^2\mathrm{d}s\Big),
\end{align*}
with $c$  independent from $h$.\\
Since we chose $u_h^\ast=\tilde{R}_hu$ and $w_h^\ast=\tilde{R}_hw$, the consistency result, Proposition \ref{pr_consist}, applies and we obtain
\begin{align*}
||&e^u_h(t)||_h^2+||e^w_h(t)||_h^2+\int_0^t||\partial_te^u_h(s)||_h^2\mathrm{d}s+\int_0^t||e^w_h(s)||_h^2\mathrm{d}s
\leq ch^4 .
\end{align*}
Because it is $|\cdot |_h\leq||\cdot||_h$, we obtain as well
\begin{align*}
|&e^u_h(t)|_h^2+|e^w_h(t)|_h^2+\int_0^t|\partial_te^u_h(s)|_h^2\mathrm{d}s+\int_0^t|e^w_h(s)|_h^2\mathrm{d}s
\leq ch^4 .
\end{align*}
This inequalities, the equivalence of discrete and continuous norms and the bounds for the Ritz error yield	the statements of Theorem \ref{non_lin_convergence}. 
\end{proof}

\chapter{Time discretization}

To finally solve the Cahn--Hilliard equation with dynamic boundary conditions numerically we will discretize in time using the backward difference formulae (BDF). For the linear case we use the usual BDF method, for the nonlinear case the linearly implicit method.
The $k$-step BDF method is determined by the coefficients of the generating polynomial $\delta(\xi)= \sum_{l=0}^k \delta_l i^l= \sum_{l=1}^k \frac{1}{l}(1-\xi)^l$. The classical BFD method is zero-stable for $k\leq 6$ and has order $k$, see \textit{Hairer\&Wanner (1996)}, \cite[Chapter ~V]{HW96}. The method is $A$-stable for $k\leq 2$ and $A(\alpha)$-stable for $3\leq k\leq 6$.
\section{Linear case}\label{time_lin}

We recall the linear Cahn--Hilliard/Cahn--Hilliard coupling \eqref{Cahn_H_bulk}--\eqref{Cahn_H_suface} and add another inhomogeneity to obtain
\begin{subequations}\label{ch_extra_f}
	\begin{align}
	\partial_t u(x,t) &= \Delta w(x,t)+ f_1^\Omega(x,t)\qquad&\text{in}\  \Omega\times[0,T]\\
	w(x,t) &=  -\Delta u(x,t) + f_2^\Omega(x,t) \qquad&\text{in}\  \Omega\times[0,T],
	\end{align}
\end{subequations}
with dynamic boundary conditions
\begin{subequations}\label{ch_extra_f_surface}
	\begin{align}
	\partial_t u(x,t) &= \Delta_\Gamma w(x,t) - \partial_\nu w(x,t)+f_1^\Gamma (x,t)\qquad &\text{on}\  \Gamma\times [0,T] \\
	w(x,t) &=  -\Delta_\Gamma u(x,t)  + \partial_\nu u(x,t)+ f_2^\Gamma (x,t) \qquad &\text{on}\  \Gamma\times [0,T].
	\end{align}
\end{subequations}
We recall the matrix-vector-formulation \eqref{m_v_formulation} 
\begin{subequations}\label{n_m_y}
	\begin{eqnarray}
	\bfM\dot{\text{\textbf{u}}}(t) + \bfA\bfw(t) &=& \bfb_1(t)\\
	\bfM \bfw(t) - \bfA\bfu(t) &=& \bfb_2(t),
	\end{eqnarray}
\end{subequations}
with $\bfb_1^{(i)}(t):= m_h(\tilde{I}_h f_1(t),\phi_i)$ and $\bfb_2^{(i)}(t):= m_h(\tilde{I}_h f_2(t),\phi_i)$.

For a time step size $\tau > 0$ we set $t^n = n\tau \leq T$.
Applying the k-step BDF method on \eqref{n_m_y} yields the following linear equation system
\begin{eqnarray*}
	\frac{1}{\tau}\bfM\sum_{j=0}^k\delta_j \bfu(t^{n-j}) + \bfA\bfw(t^n) &=& \bfb_1(t^n)\\
	\bfM \bfw(t^n) - \bfA\bfu(t^n) &=& \bfb_2(t^n)
\end{eqnarray*}
which, by setting $\bfu^n:=\bfu(t^n)$ and $\bfw^n:=\bfw(t^n)$, can be rewritten as follows
\begin{equation*}
\begin{pmatrix}
\frac{\delta_0}{\tau}\bfM & \bfA\\
-\bfA & \bfM
\end{pmatrix}
\begin{pmatrix}
\bfu^n \\
\bfw^n
\end{pmatrix}
= 
\begin{pmatrix}
\bfb_1(t^n)-\frac{1}{\tau}\bfM\sum_{j=1}^k \delta_j \bfu^{n-j}\\
\bfb_2(t^n)
\end{pmatrix}.
\end{equation*}
This linear system has to be solved for every time step. The starting values $\bfu^0,\ldots,\bfu^{k-1}$ and $\bfw^0,\ldots,\bfw^{k-1}$ are assumed to be given, e.g. obtained by a Runge--Kutta method. 

\section{Nonlinear case}\label{time_nl}

We recall the nonlinear Cahn--Hilliard/Cahn--Hilliard coupling \eqref{non_linear_cahn_bulk}--\eqref{non_linear_cahn_surface}. We add inhomogeneities $f_1^\Omega, f_2^\Omega, f_1^\Gamma$ and $f_2^\Gamma$, rename the nonlinearities as $F_\Omega:\R\to\R$ and $F_\Gamma:\R\to\R$ and  to obtain
\begin{subequations}\label{nch_extra_f}
	\begin{align}
	\partial_t u(x,t) &= \Delta w(x,t)+ f_1^\Omega(x,t)\qquad&\text{in}\  \Omega\times[0,T]\\
	w(x,t) &=  -\Delta u(x,t) + f_2^\Omega(x,t) +F_\Omega(u(x,t))\qquad&\text{in}\  \Omega\times[0,T],
	\end{align}
\end{subequations}
with dynamic boundary conditions
\begin{subequations}\label{nch_extra_f_s}
	\begin{align}
	\partial_t u(x,t) &= \Delta_\Gamma w(x,t) - \partial_\nu w(x,t)+f_1^\Gamma (x,t)\qquad &\text{on}\  \Gamma\times [0,T] \\
	w(x,t) &=  -\Delta_\Gamma u(x,t)  + \partial_\nu u(x,t)+ f_2^\Gamma (x,t)+F_\Gamma(u(x,t)) \qquad &\text{on}\  \Gamma\times [0,T].
	\end{align}
\end{subequations}
The matrix-vector formulation then appears as
\begin{subequations}\label{nn_m_y}
	\begin{eqnarray}
	\bfM\dot{\text{\textbf{u}}}(t) + \bfA\bfw(t) &=& \bfb_1(t)\\
	\bfM \bfw(t) - \bfA\bfu(t) &=& \bfb_2(t)+ \bfF(\bfu(t)),
	\end{eqnarray}
\end{subequations}
with $\bfb_1^{(i)}(t):= m_h(\tilde{I}_h f_1(t),\phi_i)$, $\bfb_2^{(i)}(t):= m_h(\tilde{I}_h f_2(t),\phi_i)$ and $\bfF(\bfu(t))_i= m_h(F(u),\phi_i) $, where $F=(F_\Omega,F_\Gamma)$.

To have the nonlinearity  given explicitly in each time step we approximate $\bfu(t^n)$, where it appears as an argument of the nonlinearity, by the extrapolated value $ \sum_{j=0}^{k-1} \gamma_j \bfu ^{n-j-1}\approx \bfu(t^n)$. The coefficients $\gamma_j$ are defined as the coefficients of the polynomial $\gamma(\xi)=\sum_{l=0}^{k-1} \gamma_l \xi^l = (1-(1-\xi)^k)/\xi$.
The linearly implicit BDF method applied on \eqref{nn_m_y} is described by following system
\begin{eqnarray*}
	\frac{1}{\tau}\bfM\sum_{j=0}^k\delta_j \bfu(t^{n-j}) + \bfA\bfw(t^n) &=& \bfb_1(t^n)\\
	\bfM \bfw(t^n) - \bfA\bfu(t^n) &=& \bfb_2(t^n) + \bfF\Big(\sum_{j=0}^{k-1} \gamma_j \bfu ^{n-j-1}\Big)
\end{eqnarray*}
which can be rewritten as follows
\begin{equation*}
\begin{pmatrix}
\frac{\delta_0}{\tau}\bfM & \bfA\\
-\bfA & \bfM
\end{pmatrix}
\begin{pmatrix}
\bfu^n \\
\bfw^n
\end{pmatrix}
= 
\begin{pmatrix}
\bfb_1(t^n)-\frac{1}{\tau}\bfM\sum_{j=1}^k \delta_j \bfu^{n-j}\\
\bfb_2(t^n)+ \bfF(\sum_{j=0}^{k-1} \gamma_j \bfu ^{n-j-1})
\end{pmatrix}.
\end{equation*}
This linear system has to be solved for every time step. The starting values $\bfu^0,\ldots,\bfu^{k-1}$ and $\bfw^0,\ldots,\bfw^{k-1}$ are assumed to be given, e.g. obtained by a Runge--Kutta method.
The linearly implicit BDF method remains to be zero-stable and has still order $k$, cf. \textit{Akrivis \&Lubich (2015)}, \cite{AL}.

\chapter{Numerical experiments}

In this chapter we present our numerical results for linear and nonlinear variants of the Cahn--Hilliard equation with dynamic Cahn--Hilliard boundary conditions, which illustrate the proven convergence bounds, Theorem \ref{convergence} and \ref{non_lin_convergence}. 

For spatial discretization the implementation uses linear finite elements and for time discretization we use the BDF3 method as presented in the former chapter. As the domain we use the two-dimensional unit disk, and meshes that are generated via DistMesh \cite{distmesh}. We will show results for different mesh sizes and different time step sizes. We choose the inhomogeneities, $f_1^\Omega, f_2^\Omega, f_1^\Gamma$ and $f_2^\Gamma$, such that the solution of \eqref{ch_extra_f}--\eqref{ch_extra_f_surface} respectively \eqref{nch_extra_f}--\eqref{nch_extra_f_s} is $(u, w)= (e^{-t}xy,e^{-t}xy)$.
For the nonlinear case we consider the Cahn--Hilliard equation with double-well potential, both in the bulk and on the surface, therefore we choose the nonlinearity $F(u)=u^3-u$.

We consider the error between the exact solution at time $t^N=1$ and the approximated solution we computed, via the methods described in Sections \ref{time_lin} and \ref{time_nl}, respectively. In Figures \ref{1test} and \ref{2test} we present the errors in the $L^2$-norm on $L^2(\Omega)\times L^2(\Gamma)$ on the left-hand sides and the errors in the $H^1$-norm on $\lbrace v\in H^1(\Omega) : \gamma v\in H^1(\Gamma)\rbrace $ on the right-hand sides. The errors are both shown a double logarithmic plot and are plotted against the different mesh sizes. We used meshes in eight different refinements, the number of nodes we had on the unit disk where $2^i\cdot10 $ for $i=1,\ldots, 8$.  We use time step sizes $\tau = (0.025, 0.0125, 0.005, 0.0025)$, the lines for the different step sizes are marked by different symbols. In all plots are dashed, quadratic reference lines, in the right subplots there is a linear reference line as well. For all error lines in Figure \ref{1test} and \ref{2test} we observe two regions: One region where the error lines match the quadratic convergence rate, since the spatial discretization error dominates, and one region where the lines flatten out, since the time discretization error starts to dominate.  For both left-hand side subplots, i.e. the $L^2$-errors, the convergence rate matches with the convergence rates we proved in Theorem \ref{convergence} and \ref{non_lin_convergence}. For the errors in the $H^1$-norm we observe a better convergence than we expected, which was $\mathcal{O}(h^2)$.

\section*{Evolution plots of the Cahn--Hilliard equation with dynamic boundary conditions}

An illustrate of the phenomena of phase separation described  by the Cahn--Hilliard equation with dynamic Cahn--Hilliard boundary conditions is shown in Figure \ref{sol}. We considered a nonlinear variant of the Cahn--Hilliard equation with potentials $W(u)=10(u^2-1)^2$. We again used the linear finite element method and the linearly implicit version of the BDF3 method. For our plots shown in the figure we generated random initial data $u_0\in \lbrace -1,1\rbrace$, choose a grid with 640 nodes, a time step size $\tau = 0.00125$ and choose the domain as the disk with radius 10. The colorbar in Figure \ref{sol} is valid for every subplot. Each subplot shows the solution $u$ of our problem at a certain time $t\in [0,3]$, we can clearly observe the phase separation.

\vspace{3cm}

The Matlab files containing our implementation for solving the Cahn--Hilliard equation with dynamic Cahn--Hilliard boundary conditions and generating the presented plots (Figures \ref{1test}, \ref{2test} and \ref{sol}) can be found at: https://gitlab.com/paulaharder/masterthesis.

\clearpage
\begin{figure}[htbp]
	\includegraphics[width=1\textwidth,trim={115 10 150 0},clip]{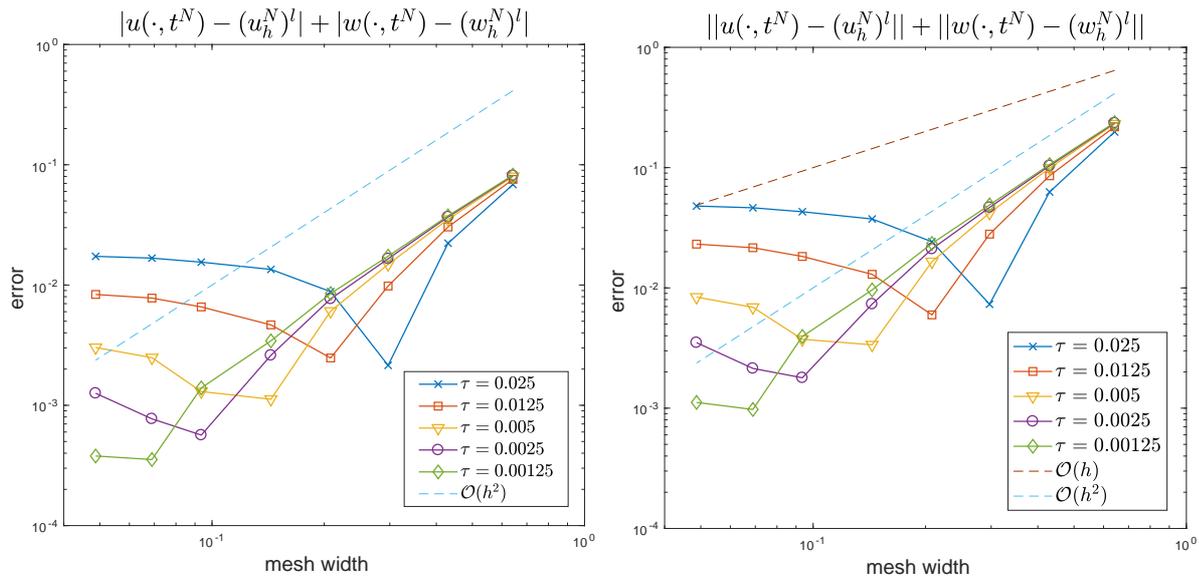}
	\caption{Spatial convergence for the linear Cahn--Hilliard equation with dynamic boundary conditions \eqref{ch_extra_f}--\eqref{ch_extra_f_surface}}
	\label{1test}
\end{figure}
\begin{figure}[htbp]
	\includegraphics[width=1\textwidth,trim={110 0 150 0},clip]{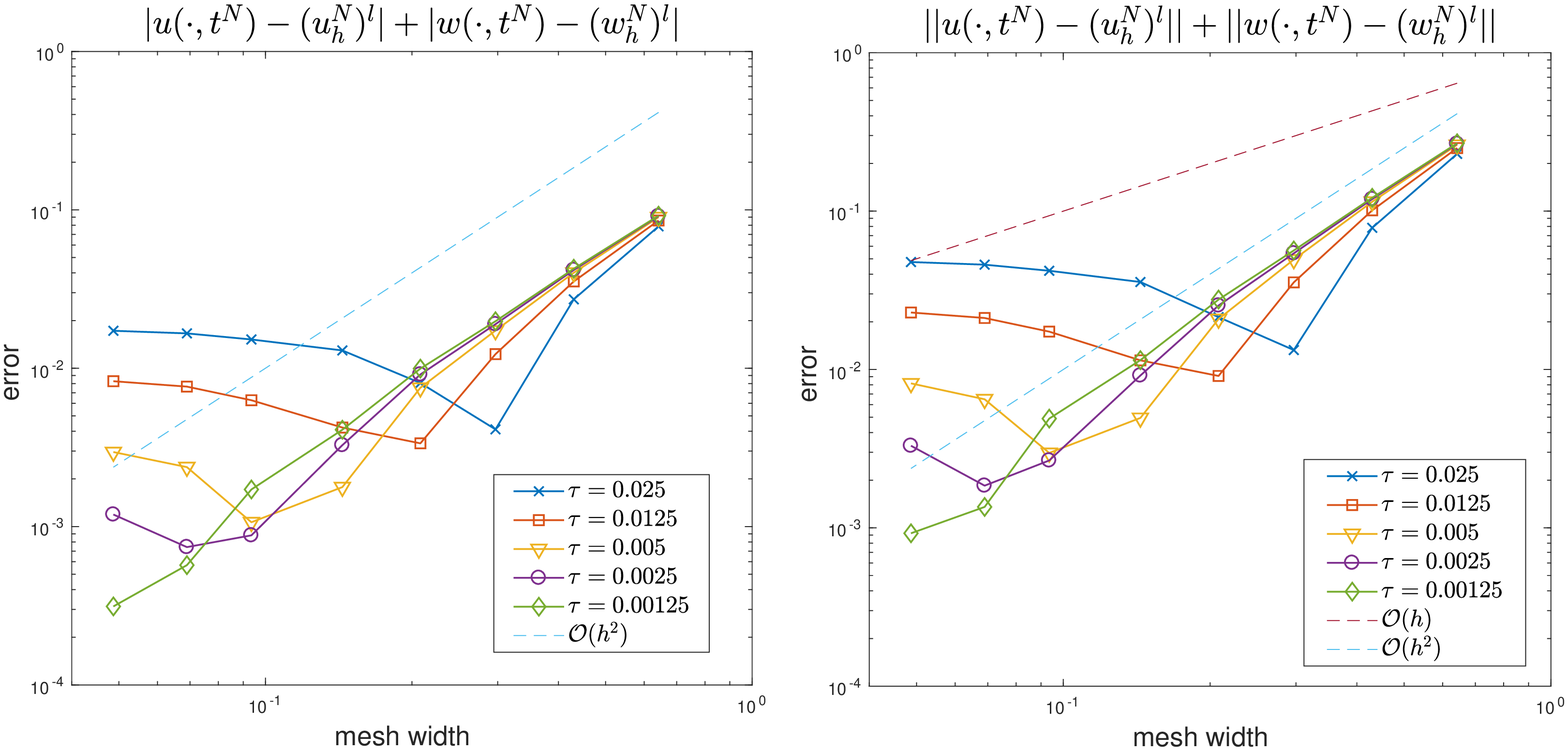}
	\caption{Spatial convergence for the nonlinear Cahn--Hilliard equation with dynamic boundary conditions \eqref{nch_extra_f}--\eqref{nch_extra_f_s} with potentials $\frac{1}{4}(u^2-1)^2$}
	\label{2test}
\end{figure}

\clearpage
\begin{figure}[htbp]
	\includegraphics[width=1\textwidth,trim={200 0 500 0},clip]{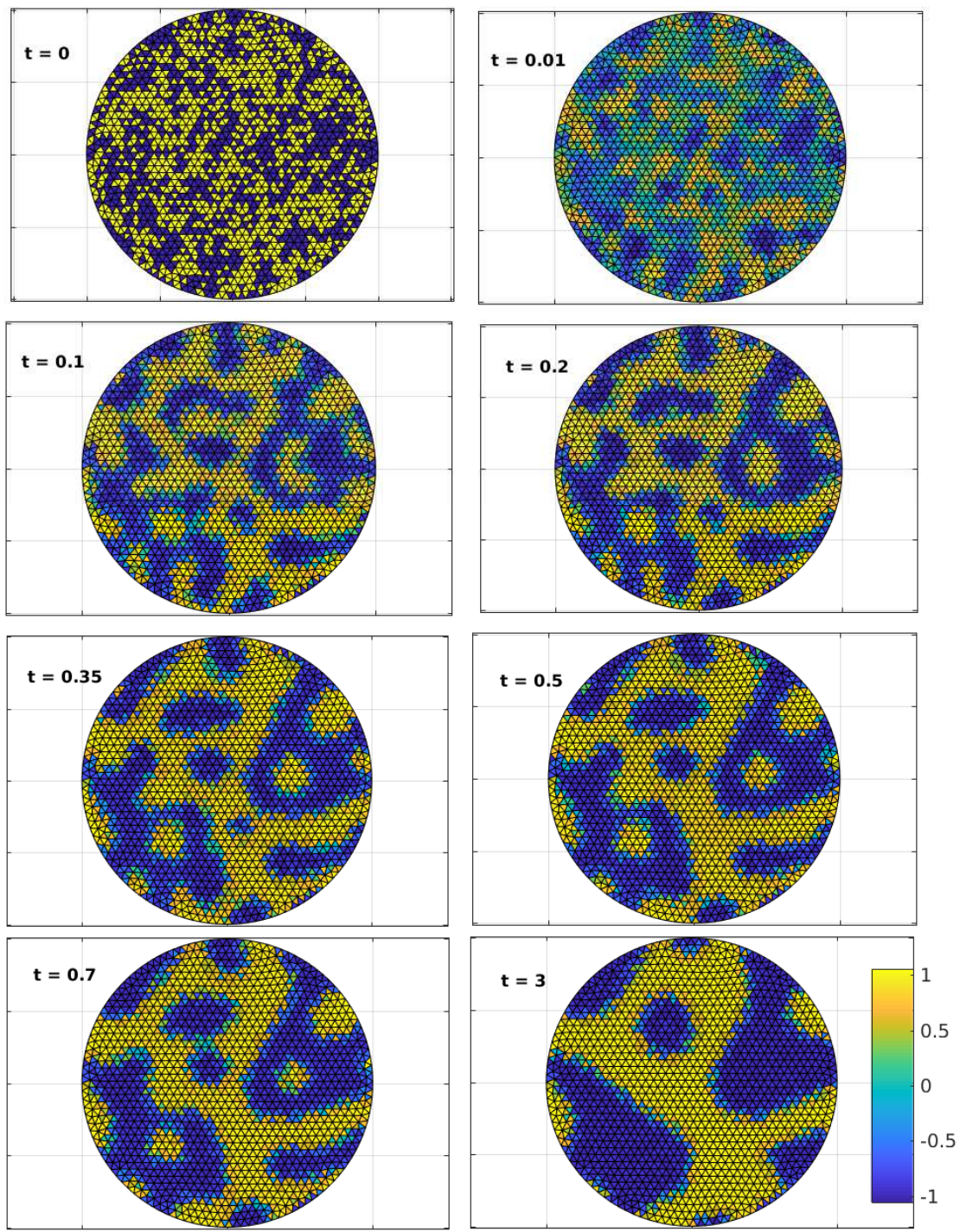}	\caption{Evolution the nonlinear Cahn--Hilliard equation with dynamic boundary conditions \eqref{nch_extra_f}--\eqref{nch_extra_f_s} with potentials $10(u^2-1)^2$}
	\label{sol}
\end{figure}

\clearpage

\newpage
\thispagestyle{empty}

\cleardoublepage
\bibliographystyle{acm}
\bibliography{mt_ref}
\addcontentsline{toc}{chapter}{Bibliography}
\end{document}